\let\oldbibliography\thebibliography
\renewcommand{\thebibliography}[1]{%
\oldbibliography{#1}%
\setlength{\itemsep}{0pt}%
}
\newtheorem*{pro*}{Proposition~$3.7^{(')}$}
\newtheorem{Remark}{Remark}[section]
\newtheorem{thm}{Theorem}[section]
\newtheorem{pro}{Proposition}[section]
\newtheorem{lem}{Lemma}[section]
\numberwithin{equation}{section}
\begin{document}
\title{ {\bf Super polyharmonic property and asymptotic behavior of solutions to the higher order Hardy-H\'enon equation near isolated singularities} }
\date{\today}
\author{\\ Xia Huang,
\;\;  Yuan Li,  \;\;   Hui Yang
 }
\maketitle

\begin{center}
\begin{minipage}{140mm}
\begin{center}{\bf Abstract}\end{center}
In this paper, we are devoted to studying the positive solutions of the following higher order Hardy-H\'enon equation
$$
(-\Delta)^{m}u=|x|^{\alpha}u^{p}  \quad\mbox{in}~ B_{1}\setminus\{0\}\subset\mathbb{R}^{n}
$$
with an isolated singularity at the origin, where $\alpha>-2m$, $m\geq1$ is an integer and $n>2m$. For $1<p<\frac{n+2m}{n-2m}$, singularity and decay estimates of solutions will be given. For $\frac{n+\alpha}{n-2m}<p<\frac{n+2m}{n-2m}$ with $-2m<\alpha<2m$, we show the super polyharmonic properties {of solutions near the singularity}, which are essential tools in the study of polyharmonic equation. Using these properties, a classification of isolated singularities of positive solutions is established for the fourth order case, i.e., $m=2$. Moreover, when $m=2$, $\frac{n+\alpha}{n-4}<p<\frac{n+4+\alpha}{n-4}$ and $p\neq \frac{n+4+2\alpha}{n-4}$ with $-4<\alpha\leq0$, we obtain the precise behavior of solutions near the singularity, i.e., either $x=0$ is a removable singularity or
$$\lim_{|x|\rightarrow0}|x|^{\frac{4+\alpha}{p-1}}u(x)=[A_{0}]^{\frac{1}{p-1}},$$
where $A_0>0$ is an exact constant.

\medskip

\noindent {\it Mathematics Subject Classification (2020):} 35B40; 35B65; 35J30

\medskip

\noindent{\it Keywords:} Higher order Hardy-H\'enon equation; Super polyharmonic properties; Asymptotic behavior; Isolated singularity

\end{minipage}

\end{center}

\section{Introduction}
In this paper, we are interested in the positive singular solutions of the following higher order elliptic equation
\begin{align}\label{eq1}
(-\Delta)^{m}u(x)=|x|^{\alpha}u^{p}(x)  \quad\mbox{in}~ B_{1} \setminus\{0\}\subset\mathbb{R}^{n},
\end{align}
where $m\geq2$ is an integer, {the punctured unit ball $B_{1} \setminus\{0\}\subset\mathbb{R}^{n}$ with $n>2m$}, $1<p<\frac{n+2m}{n-2m}$ and $\alpha>-2m$. Equation \eqref{eq1} with $m=1$ is traditionally called the H\'enon (resp. Hardy or Lane-Emden) equation if $\alpha>0$ (resp. $\alpha<0$ or $\alpha=0$). In the literature,  equation \eqref{eq1} as a type of classical equations has been widely studied in the last decades, since it can be seen from various geometric and physical problems.

\medskip

To tackle equation \eqref{eq1}, various type of solutions were introduced and studied such as classical solutions, weak solutions, distributional solutions, singular solutions, etc. In this paper, we say that a function $u$ is a solution to \eqref{eq1} if it belongs to $C^{2m}(B_{1}\setminus\{0\})$ and satisfies the equation in the classical sense, and $u$ is a distributional solution to \eqref{eq1} if
$$u\in L^{1}_{loc}(B_{1}\setminus\{0\}), \quad |x|^{\alpha}u^{p}\in L^{1}_{loc}(B_{1}\setminus\{0\}), $$
and \eqref{eq1} is satisfied in the sense of distributions, that is,
$$
\int_{B_{1}}u(-\Delta)^{m}\phi dx=\int_{B_{1}}|x|^{\alpha}u^{p}\phi dx  \qquad   \forall \phi\in C^{\infty}_{c}(B_{1}\setminus\{0\}).
$$
Meanwhile, let us present two important numbers: Sobolev type exponent $P_S(m,\alpha)=\frac{n+2m+2\alpha}{n-2m}~(=\infty,~\text{if}~n=2m)$ and Serrin type exponent $P_C(m,\alpha)=\frac{n+\alpha}{n-2m}~(=\infty,~\text{if}~n=2m)$, which play a vital role in the local behavior of solutions when $p$ passes through them.

\medskip

Now we recall some results of positive singular solutions to the second order equation, i.e., $m=1$. In the autonomous case (i.e., $\alpha=0$), the understanding of asymptotic behavior of positive singular solutions is relatively complete. More precisely, it was studied by Lions \cite{L1980} for $1<p<\frac{n}{n-2}$ and by Gidas-Spruck \cite{GS1981} for $\frac{n}{n-2} < p < \frac{n+2}{n-2}$. When $p=\frac{n+2}{n-2}$, the critical equation $-\Delta u=u^{\frac{n+2}{n-2}}$ is closely related to the classical Yamabe problem, and Caffarelli-Gidas-Spruck \cite{CGS1989} showed that every positive singular solution $u$ satisfies the following asymptotic radial symmetry
$$
u(x)=\bar{u}(|x|)(1+O(|x|)) \quad \mbox{as}~ |x| \rightarrow0,
$$
where $\bar{u}(|x|)=\fint_{\mathbb{S}^{n-1}}u(r, \omega)d\omega$ is the spherical average of $u$. Based on this result, they also established the precise asymptotic behavior of singular solutions. We may also refer to Korevaar-Mazzeo-Pacard-Schoen \cite{KMPS1999} for this critical case.
When $p > \frac{n+2}{n-2}$, the asymptotic behavior was studied by Bidaut-V\'eron and V\'{e}ron in \cite{BVV1991}.

\medskip

In the non-autonomous case, Li \cite{Li} proved the asymptotic radial symmetry of positive solutions to equation \eqref{eq1} with $-2<\alpha\leq 0$ and $1<p\leq \frac{n+2+\alpha}{n-2}$. When $-2<\alpha<2$, for $1<p<\frac{n+\alpha}{n-2}$ Gidas-Spruck \cite[Appendix A]{GS1981} obtained the asymptotic behavior and Zhang-Zhao \cite{ZZ} studied the existence of positive singular solutions; for $p=\frac{n+\alpha}{n-2}$ Aviles \cite{A1987} obtained that either $x=0$ is a removable singularity or
$$\lim_{|x|\rightarrow0}|x|^{n-2}(-\ln|x|)^{\frac{n-2}{\alpha+2}}u(x)=\left(\frac{n-2}{\sqrt{\alpha+2}}\right)^{n-2};$$
for $\frac{n+\alpha}{n-2}<p<\frac{n+2}{n-2}$ and $p\neq\frac{n+2+2\alpha}{n-2}$ Gidas-Spruck \cite{GS1981} showed that the singularity at $x=0$ is removable or there holds
$$\lim_{|x|\rightarrow0}|x|^{\frac{\alpha+2}{p-1}}u(x)=\left[\frac{(2+\alpha)(n-2)}{(p-1)^{2}}\left(p-\frac{n+\alpha}{n-2}\right)\right]^{\frac{1}{p-1}}.$$
 These mean that isolated singularities of positive solutions to the equation \eqref{eq1} {with $m=1$} have been very well understood.

\medskip

However, as far as we know, for the general polyharmonic situation $m\geq 2$, isolated singularities of positive solutions to the equation \eqref{eq1} are less understood. One main difficulty for such higher order equations is the absence of the maximum principle. Recently, this problem is beginning to attract a lot of attention. First we state the results in the autonomous case, i.e., $\alpha=0$. Jin-Xiong \cite{JX2021} proved sharp blow up rates and the asymptotic radial symmetry of positive solutions of \eqref{eq1} with $p=\frac{n+2m}{n-2m}$ under the sign assumptions
\begin{align}\label{SPH}
(-\Delta)^k u>0 \quad \mbox{in} ~ B_1 \setminus \{0\}  \quad\quad \mbox{for all} ~ 1\leq k \leq m-1.
\end{align}
These are also called the {\bf super polyharmonic properties} and are essential tools in the study of polyharmonic equation.  When $m=2$ and $\frac{n}{n-4}<p<\frac{n+4}{n-4}$, Yang \cite{Y2020} showed that either the singularity at $x=0$ is removable or there holds
$$\lim_{|x|\rightarrow0}|x|^{\frac{4}{p-1}}u(x)=[K_{0}(p,n)]^{\frac{1}{p-1}}$$
for an exact constant $K_{0}(p,n)>0$ without the sign assumption of $-\Delta u$. For the supercritical case, the results of singular solutions are less known. Assuming that $|x|^{\frac{4}{p-1}}u(x)\in L^{\infty}(B_{1}(0))$, Wu \cite{W2022} recently studied the classification of isolated singularities for $\frac{n+4}{n-4}<p<\frac{n+4}{n-4}+\varepsilon$, where $\varepsilon>0$ is a constant.

In the non-autonomous case $-2m < \alpha \leq 0$, under the sign assumptions \eqref{SPH}, Caristi-Mitidieri-Soranzo \cite{CMS98} obtained the local behavior of positive solutions for $1< p < \frac{n+\alpha}{n-2m}$ and Li \cite{Lim} proved that every positive solution of \eqref{eq1} is asymptotically radially symmetric near the origin for $\frac{n+\alpha}{n-2m} < p \leq \frac{n+2m+2\alpha}{n-2m}$. Thus, it is natural to ask:

\begin{itemize}
\item [(1)] Can the sign assumptions \eqref{SPH} be removed?

\item [(2)] Can we describe the precise asymptotic behavior of solutions to \eqref{eq1} near the singularity when $\alpha \neq 0$?

\end{itemize}

In this paper, we focus on super polyharmonic properties and asymptotic behavior of positive solutions to the higher order Hardy-H\'enon equation \eqref{eq1}. In particular, when $-2m<\alpha<2m$ and $\frac{n+\alpha}{n-2m}<p<\frac{n+2m}{n-2m}$, we would show that all the positive solutions of \eqref{eq1} with non-removable singularities satisfy the super polyharmonic properties \eqref{SPH} near the origin. Using this as a tool, we establish a Harnack inequality for singular solutions of \eqref{eq1}, and further give a classification of isolated singularities in the fourth order case (i.e., $m=2$). Moreover, for $m=2$ and $-4 < \alpha\leq 0$,  we also obtain the precise asymptotic behavior of positive singular solutions of \eqref{eq1}.

\medskip

We denote $B_r(x)$ as the open ball in $\mathbb{R}^n$ with center $x$ and radius $r$, and also write $B_r(0)$ as $B_r$ for simplicity. First of all, we have the following singularity and decay estimates. Under the sign assumptions \eqref{SPH} and the condition $1< \frac{n+\alpha}{n-2m} < p < \frac{n+2m}{n-2m}$, the singularity estimate in \eqref{Sing048} has also been proved in \cite{Lim}.

\begin{thm}\label{thm1.1}
Let $n>2m$, $\alpha>-2m$ and $1<p<\frac{n+2m}{n-2m}$. Then there exists a constant $C=C(m,n,p,\alpha)>0$ such that the following two conclusions hold.
\begin{itemize}
\item [(i)] Any nonnegative solution of \eqref{eq1} in $B_1 \setminus  \{0\}$ satisfies
\begin{align}\label{Sing048}
\sum_{i\leq 2m-1}|x|^{\frac{2m+\alpha}{p-1}+i}|\nabla^i u(x)|\leq C  \qquad \mbox{for} ~ x \in B_{1/2}\setminus  \{0\}.
\end{align}
\item [(ii)] Any nonnegative solution of \eqref{eq1} in $B^c_1$ satisfies
\begin{align}\label{Dac609}
\sum_{i\leq 2m-1}|x|^{\frac{2m+\alpha}{p-1}+i}|\nabla^i u(x)|\leq C  \qquad \mbox{for} ~ x \in B^c_2.
\end{align}
\end{itemize}
\end{thm}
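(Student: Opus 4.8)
The plan is to establish the interior gradient estimate \eqref{Sing048} first, using a rescaling (blow-up) argument combined with elliptic regularity and a Liouville-type property, and then deduce the exterior estimate \eqref{Dac609} by the Kelvin transform. For part (i), suppose \eqref{Sing048} fails. Then one can find a sequence of nonnegative solutions $u_k$ and points $x_k \in B_{1/2}\setminus\{0\}$ such that the quantity $M_k := \sum_{i\le 2m-1}|x_k|^{\frac{2m+\alpha}{p-1}+i}|\nabla^i u_k(x_k)|$ tends to infinity. Following the standard doubling-lemma device of Pol\'a\v{c}ik--Quittner--Souplet, I would first upgrade $x_k$ to a point where the weighted quantity is "almost maximal" on a suitable ball, then rescale: set $\lambda_k$ so that $\lambda_k^{\frac{2m+\alpha}{p-1}}$ balances the size of $u_k$ near $x_k$, and define $v_k(y) = \lambda_k^{\frac{2m+\alpha}{p-1}} u_k(x_k + \lambda_k y)$. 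The key point is that the weight $|x|^\alpha$ rescales correctly: $(-\Delta)^m v_k(y) = |\,\hat{x}_k + (\lambda_k/|x_k|) y\,|^{\alpha} v_k^p(y)$ where $\hat x_k = x_k/|x_k|$, and since $\lambda_k/|x_k| \to 0$ (because $M_k\to\infty$ forces the rescaling scale to be small relative to $|x_k|$), the coefficient converges locally uniformly to the constant $1$. Thus in the limit $v_k \to v$, a nonnegative entire solution of $(-\Delta)^m v = v^p$ on $\mathbb{R}^n$ (or a half-space problem, excluded by the doubling argument keeping us away from the puncture) with $v(0)$ and some derivative normalized to be nonzero — contradicting the Liouville theorem for the subcritical Lane--Emden equation $1<p<\frac{n+2m}{n-2m}$ (Wei--Xu, or the recent higher-order Liouville results) together with interior $C^{2m}$ elliptic estimates that give the needed compactness.

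For the convergence step I would invoke interior $L^p$–Schauder estimates for the polyharmonic operator applied iteratively: writing $(-\Delta)^m v_k = f_k$ with $f_k$ bounded in $C^\gamma_{loc}$, one gets $v_k$ bounded in $C^{2m,\gamma}_{loc}$, hence a subsequence converging in $C^{2m}_{loc}$. One subtlety is that a priori $v_k$ is only bounded pointwise at the base point, not uniformly; the doubling/maximality construction is exactly what promotes this to a uniform local bound for $\sum_{i\le 2m-1}|\nabla^i v_k|$ on balls of fixed radius, which is what feeds the regularity bootstrap.

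For part (ii), given a nonnegative solution $u$ on $B_1^c$, define the Kelvin transform $\tilde u(x) = |x|^{-(n-2m)} u(x/|x|^2)$, which is a nonnegative solution of $(-\Delta)^m \tilde u = |x|^{\tilde\alpha} \tilde u^{\tilde p}$ on $B_1\setminus\{0\}$ for appropriate $\tilde p, \tilde\alpha$ — a computation shows $\tilde p = p$ is not what one wants; rather the exponents transform so that the subcritical range is preserved (this uses $n>2m$ and the conformal covariance of $(-\Delta)^m$). Applying part (i) to $\tilde u$ on $B_{1/2}\setminus\{0\}$ and translating the weighted gradient bound back through the Kelvin transform yields \eqref{Dac609} on $B_2^c$; the weight exponents match precisely because of the algebraic identity relating $\frac{2m+\alpha}{p-1}$ under inversion.

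The main obstacle I anticipate is the Liouville theorem input: for $m\ge 2$ the classification of nonnegative entire solutions of $(-\Delta)^m v = v^p$ in the subcritical range is more delicate than in the second-order case and historically required either the super polyharmonic property $(-\Delta)^k v>0$ or a moving-plane/integral-estimate argument. I would need to cite the appropriate version (e.g. the results ensuring nonexistence of positive entire subcritical polyharmonic solutions) and verify that the limit $v$ obtained from the blow-up automatically inherits enough regularity and positivity to apply it — in particular checking that the blow-up limit cannot be the trivial solution, which is guaranteed by the normalization forcing some derivative of $v$ at the origin to be nonzero. A secondary technical point is handling the case where the blow-up points $x_k$ approach the origin: here one must confirm that the rescaled domains still exhaust $\mathbb{R}^n$ rather than a half-space or punctured space, which again follows from the doubling lemma applied to the function $x\mapsto \mathrm{dist}(x,\{0\})$-weighted quantity on $B_{3/4}$.
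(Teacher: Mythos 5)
Your high-level plan (doubling lemma + rescaling + Wei--Xu Liouville theorem) is exactly the right toolkit, and you identified that the nontrivial issue is the singular weight. However, the proposal has a concrete gap in the rescaling for part (i), and uses a genuinely different (and riskier) route for part (ii).

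\textbf{Gap in part (i).} With your proposed scaling $v_k(y)=\lambda_k^{\frac{2m+\alpha}{p-1}}u_k(x_k+\lambda_k y)$, a direct computation gives
\[
(-\Delta)^m v_k(y)=\lambda_k^{-\alpha}\,|x_k+\lambda_k y|^{\alpha}\,v_k^p(y)=\Big(\tfrac{|x_k|}{\lambda_k}\Big)^{\!\alpha}\big|\hat x_k+\tfrac{\lambda_k}{|x_k|}y\big|^{\alpha}v_k^p(y),
\]
not $|\hat x_k+(\lambda_k/|x_k|)y|^\alpha v_k^p(y)$ as you wrote. The extra factor $(|x_k|/\lambda_k)^\alpha$ is not harmless: the doubling lemma gives $\lambda_k/|x_k|\to 0$, so this prefactor blows up (if $\alpha>0$) or vanishes (if $\alpha<0$), and the limit equation degenerates rather than becoming $(-\Delta)^m v=v^p$. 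One cannot obtain the coefficient you claim without changing the normalization to $v_k(y)=|x_k|^{\alpha/(p-1)}\lambda_k^{2m/(p-1)}u_k(x_k+\lambda_k y)$, but then the normalization at $y=0$ is no longer the one you want. The paper avoids this entirely by a two-step argument: it first proves a clean doubling-lemma estimate (Lemma 3.1) for equations of the form $(-\Delta)^m u=c(x)u^p$ on $B_1$ with $c$ \emph{bounded between two positive constants and H\"older continuous} --- there is no singular weight, so the doubling-lemma rescaling $v_k(y)=\lambda_k^{2m/(p-1)}u_k(x_k+\lambda_k y)$ produces a bounded, equicontinuous coefficient converging to a positive constant. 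Then, to deduce \eqref{Sing048}, it applies a deterministic (not doubling-based) rescaling $v(y)=R^{\frac{2m+\alpha}{p-1}}u(x_0+Ry)$ with $R=|x_0|/2$, which turns the weight into $c(y)=|y+x_0/R|^\alpha$ with $|y+x_0/R|\in[1,3]$ for $y\in B_1$; Lemma 3.1 applied at $y=0$ gives the bound at $x_0$. You should restructure your argument into these two stages; the ``direct'' doubling argument on the punctured ball does not close.

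\textbf{Part (ii): different route, with a subtlety.} The paper obtains \eqref{Dac609} by simply repeating the $R=|x_0|/2$ rescaling for $x_0\in B_2^c$ (still $x_0+Ry\in B_1^c$ and $|y+x_0/R|\in[1,3]$), so no new ingredient is needed. Your Kelvin-transform reduction is a legitimate alternative idea, but note that $\tilde u(x)=|x|^{-(n-2m)}u(x/|x|^2)$ solves $(-\Delta)^m\tilde u=|x|^{\tilde\alpha}\tilde u^p$ with $\tilde\alpha=(n-2m)p-n-2m-\alpha$, and for $p$ near $1$ (which the theorem allows) one gets $\tilde\alpha\le -2m$, so the statement of part (i) does not directly apply to $\tilde u$. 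You would need to first observe that the two-step argument for part (i) actually works for all $\alpha\in\mathbb{R}$ (the rescaled coefficient is bounded regardless), and you would also need to carry out nontrivial bookkeeping of the derivative terms under inversion. Both are doable, but the paper's direct repetition of the scaling argument is shorter and uniform across the two parts.
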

The origin $x=0$ is called a non-removable singularity of solution $u$ of \eqref{eq1} if $u(x)$ cannot be extended as a continuous function near $0$. Using the above upper bound result, we further show that the super polyharmonic properties hold in some small punctured ball $B_\tau \setminus{\{0}\}$. Remark that the similar super polyharmonic properties of equation \eqref{eq1} on the entire space $\mathbb{R}^n$ (or, on $\mathbb{R}^n \setminus \{0\}$) have been widely studied and applied; see \cite{CL13,CLO06,DQ18,Lin,NY2022,WX1999} and the references therein. However, to the best of our knowledge, such results for the local equation \eqref{eq1} are relatively few.
\begin{thm}\label{thm1.2}
Let $n>2m$, $-2m<\alpha<2m$ and $\frac{n+\alpha}{n-2m}<p<\frac{n+2m}{n-2m}$.  Assume that $u\in C^{2m}(B_{1}\setminus\{0\})$ is a positive solution of \eqref{eq1} and $x=0$ is a non-removable singularity. Then
$$
(-\Delta)^k u > 0 \quad \mbox{for all} ~  1 \leq k \leq m-1
$$
in some small punctured ball $B_\tau \setminus{\{0}\}$ for some $\tau>0.$
\end{thm}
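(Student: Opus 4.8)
The plan is to decompose $u$ into a singular part governed by the fundamental solution and a regular remainder, and then iterate through the intermediate Laplacians $(-\Delta)^k u$ for $k = m-1, m-2, \dots, 1$. First I would use the singularity estimate \eqref{Sing048} from Theorem \ref{thm1.1}(i): since $\frac{2m+\alpha}{p-1} < n-2m$ is equivalent to $p > \frac{n+\alpha}{n-2m} = P_C(m,\alpha)$, the bound $u(x) \le C|x|^{-\frac{2m+\alpha}{p-1}}$ together with $p < \frac{n+2m}{n-2m}$ and $\alpha < 2m$ forces $|x|^\alpha u^p \in L^1(B_{1/2})$; indeed the exponent $\alpha - \frac{p(2m+\alpha)}{p-1}$ exceeds $-n$ precisely under $p > P_C(m,\alpha)$. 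Hence $f := |x|^{\alpha} u^p$ is an integrable right-hand side near $0$, and one can write, on a smaller ball $B_\tau$,
\[
u = \sum_{\text{suitable } j} c_j |x|^{2j - n} + w_m + h,
\]
where $w_m$ is the Newtonian-type potential of $f$ (polyharmonic Green potential), $h$ is polyharmonic and smooth across $0$, and the $|x|^{2j-n}$ terms capture the admissible homogeneous singular solutions of $(-\Delta)^m$. The key point is that since $x=0$ is a non-removable singularity, at least one of these singular coefficients, or the potential term, must genuinely blow up.

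Next I would analyze $(-\Delta)^{m-1} u$. Applying $(-\Delta)^{m-1}$ to the decomposition kills $h$, reduces $w_m$ to the Newtonian potential $w_1$ of $f$ (which satisfies $-\Delta w_1 = f \ge 0$, so $w_1$ is superharmonic, and being a potential of a nonnegative $L^1$ function it is nonnegative up to a harmonic function we can absorb), and reduces $\sum c_j|x|^{2j-n}$ to a multiple of the fundamental solution $c|x|^{2-n}$. Thus $(-\Delta)^{m-1} u = c|x|^{2-n} + (\text{potential of } f) + (\text{smooth harmonic})$. If $c > 0$ we are done at this level because $|x|^{2-n}$ dominates; if $c = 0$ then $(-\Delta)^{m-1}u$ equals a nonnegative potential plus a bounded harmonic term, and the strong maximum principle (now legitimately available since $(-\Delta)^{m-1}u$ solves $-\Delta\big((-\Delta)^{m-1}u\big) = f \ge 0$, $f \not\equiv 0$) gives $(-\Delta)^{m-1} u > 0$ on a possibly smaller punctured ball. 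The one case to rule out is $c < 0$: here I would argue that $c < 0$ would make $(-\Delta)^{m-1}u \to -\infty$, and then integrating back (solving the Poisson equation twice more at a time) one shows $u$ itself would be forced to be negative near $0$, contradicting positivity — this sign-bookkeeping is where one must be careful. Having secured $(-\Delta)^{m-1} u > 0$ near $0$, I would set $v = (-\Delta)^{m-1}u$ and repeat: $v > 0$ and $-\Delta\big((-\Delta)^{m-2}v\big) = (-\Delta)^{m-2}v \cdot 0 + \dots$ — more precisely $(-\Delta)^{k}u$ for descending $k$ satisfies $-\Delta\big((-\Delta)^{k-1}(\text{of the previous})\big)$ with a nonnegative source, so the same decomposition/maximum-principle argument propagates positivity down to $k = 1$. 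Shrinking $\tau$ finitely many times (once per level $k$) yields the common punctured ball $B_\tau \setminus \{0\}$.

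The main obstacle I expect is controlling the signs of the homogeneous singular coefficients and the harmonic remainders at each stage — i.e., ruling out a "wrong-sign" leading singular term while only knowing that $u$ itself is positive, not the intermediate quantities. This is exactly the difficulty flagged in the introduction as the absence of a maximum principle for $(-\Delta)^m$. The resolution should come from the integral representation: because $u$ is \emph{positive} and has a genuine singularity, and because the Green potential of the nonnegative data $f$ is itself nonnegative, any negative contribution at an intermediate order would have to be overridden by $u$'s positivity, and one can extract this by testing against the (positive) Green function of $(-\Delta)^m$ on a small ball, or equivalently by Böchner-type iteration of the Poisson equation while tracking the boundary data on $\partial B_\tau$, which stays bounded. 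A secondary technical point is justifying the decomposition itself — that the distributional equation on $B_\tau \setminus \{0\}$, combined with the integrability of $f$ and the growth bound on $u$, upgrades to the distributional equation on all of $B_\tau$ up to a finite linear combination of derivatives of the Dirac mass, whose inverse images under $(-\Delta)^{-m}$ are precisely the homogeneous terms $|x|^{2j-n}$; this is a standard removable-singularity/elliptic-regularity argument given the a priori bound \eqref{Sing048}.
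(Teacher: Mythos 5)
Your proposal takes a genuinely different route from the paper, but it contains a gap that you yourself flag and do not resolve; let me explain both the difference and the gap.

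The paper's proof rests on two ingredients. First, Lemma~\ref{Lem3.2} (taken from \cite{JX2021}, and made possible by the a priori bound $u \le C|x|^{-\frac{2m+\alpha}{p-1}}$ together with $p > \frac{n+\alpha}{n-2m}$) gives the exact integral representation
$$
(-\Delta)^k u(x) = C\int_{B_1}\frac{|y|^\alpha u^p(y)}{|x-y|^{n-2(m-k)}}\,dy + h_k(x), \qquad 1\le k \le m-1,
$$
with $h_k$ smooth across the origin and \emph{no} singular homogeneous terms at all. In other words, the decomposition you set up with a layer of $c_j|x|^{2j-n}$ terms is superfluous: the growth bound already forbids any Dirac-type contribution to $(-\Delta)^m u$. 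Second, and this is the key step you are missing, the paper proves the claim
$$
\int_{B_{1/2}}\frac{|y|^\alpha u^p(y)}{|y|^{n+2-2m}}\,dy = +\infty
$$
by contradiction: if this integral were finite, a bootstrap via the estimate $u\le C|x|^{-\frac{2m+\alpha}{p-1}}$ and the doubly weighted H\"older/Stein--Weiss machinery gives $u\in L^q$ with $q>\frac{n}{n-2m}$ and $|\cdot|^\alpha u^{p-1}\in L^{n/2m}$, and the regularity result in \cite{LYY} forces $u\in L^\infty$, i.e.\ the singularity would be removable, contradicting the hypothesis. Once this divergence is in hand, positivity of $(-\Delta)^k u$ near the origin follows trivially: the Green-potential part is bounded below on $B_\tau\setminus\{0\}$ by a truncated integral over an annulus, which can be made arbitrarily large (in particular larger than $\|h_k\|_{L^\infty}$) by shrinking $\tau$. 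There is no sign analysis of singular coefficients and no iteration over $k$ beyond the single inequality $|y|^{-(n-2(m-k))}\ge |y|^{-(n+2-2m)}$ for $|y|<1$.

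The gap in your proposal is precisely the place you flag: in the case $c=0$ (potential plus bounded harmonic remainder), the strong maximum principle does \emph{not} deliver positivity. Knowing $-\Delta\bigl((-\Delta)^{m-1}u\bigr)=f\ge 0$ only tells you $(-\Delta)^{m-1}u$ is superharmonic; to deduce it is positive you would need to control its values on some sphere $\partial B_\rho$, and nothing prevents the smooth harmonic remainder $h$ from being large and negative there. Likewise the $c<0$ case ``integrating back would force $u<0$'' is not a proof: iterating $(-\Delta)^{-1}$ does not propagate sign information unless you simultaneously control the boundary data at every level, which is exactly the absence-of-maximum-principle obstruction the introduction warns about. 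Your suggested fix (``testing against the Green function, the positivity should come out'') is essentially a restatement of what needs to be proved. The mechanism that actually closes the argument in the paper is orthogonal to this sign-bookkeeping: one shows the potential term blows up at the origin, so it swamps any bounded contribution, regardless of sign. That blow-up, in turn, is equivalent to non-removability, and proving that equivalence is where the real work (the regularity-lifting contradiction) lies.
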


In particular, Theorem \ref{thm1.2} holds for the critical Hardy-Sobolev exponent $p=\frac{n+2m+2\alpha}{n-2m}$ with $-2m < \alpha < 0$.
Moreover, using Theorem \ref{thm1.2} the additional sign conditions \eqref{SPH} in \cite{Lim} can be removed when showing asymptotic radial symmetry of positive solutions. Such super polyharmonic properties allow one to obtain an important Harnack inequality. By employing Harnack inequality and a monotonicity formula, we can establish the following classification of isolated singularities to the fourth order equation.

\begin{thm}\label{thm1.3}
Let $n>4$, $-4<\alpha<4$ and $\frac{n+\alpha}{n-4}<p<\frac{n+4}{n-4}$. Assume that $u\in C^4(B_1(0)\setminus{\{0}\})$ is a positive solution of \eqref{eq1} with $m=2$. Then either the singularity at $x=0$ is removable, or there exist two positive constants $C_1$ and $C_2$ such that
\begin{align*}
C_1 |x|^{-\frac{4+\alpha}{p-1}}\leq u(x)\leq C_2 |x|^{-\frac{4+\alpha}{p-1}} \qquad \mbox{for} ~ x\in B_{1/2}\setminus{\{0}\}.
\end{align*}
\end{thm}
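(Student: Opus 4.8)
\medskip
\noindent\textbf{Proof strategy.}
The upper bound is already contained in Theorem~\ref{thm1.1}(i) with $m=2$: it gives $u(x)\le C_{2}|x|^{-\gamma}$ near the origin, where $\gamma:=\frac{4+\alpha}{p-1}$, and, via its companion gradient estimates, also $|v(x)|\le C|x|^{-\gamma-2}$ for $v:=-\Delta u$. Note $p>\frac{n+\alpha}{n-4}$ is equivalent to $\gamma<n-4$, so $\gamma<n-4$ and $\gamma+2<n-2$; these strict inequalities are used repeatedly. Thus the entire task is the matching lower bound under the standing hypothesis that $x=0$ is a non-removable singularity.

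First I would invoke Theorem~\ref{thm1.2} (with $m=2$): since the singularity is non-removable, $v=-\Delta u>0$ in some punctured ball $B_{\tau}\setminus\{0\}$, so $(u,v)$ solves there the cooperative second order system $-\Delta u=v$, $-\Delta v=|x|^{\alpha}u^{p}$ with $u,v>0$; in particular $u$ and $v$ are positive superharmonic functions, which restores a maximum-principle structure for the fourth order problem. To get a Harnack inequality on spheres, rescale $u_{\lambda}(y)=\lambda^{\gamma}u(\lambda y)$, $v_{\lambda}(y)=\lambda^{\gamma+2}v(\lambda y)$ on a fixed annulus $\mathcal A=\{1/4<|y|<4\}$: by the scaling invariance of the exponents, $(u_{\lambda},v_{\lambda})$ solves the \emph{same} system on $\mathcal A$, and by Theorem~\ref{thm1.1} the family is uniformly bounded there. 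Interior elliptic estimates give $C^{2}_{loc}(\mathcal A)$-compactness, a limit $(u_{0},v_{0})$ solves the limiting system with $u_{0},v_{0}\ge0$, and by the strong maximum principle for superharmonic functions $u_{0}$ is either strictly positive or $\equiv0$. Ruling out the degenerate case by an appropriate normalization and covering $\partial B_{1}$ by balls where the classical interior Harnack inequality applies, one obtains $C_{H}$ with $\sup_{\partial B_{r}}u\le C_{H}\inf_{\partial B_{r}}u$ and $\sup_{\partial B_{r}}v\le C_{H}\inf_{\partial B_{r}}v$ for all small $r$.

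Next I would pass to the radial profiles $\bar u(r)=\fint_{\partial B_{r}}u$, $\bar v(r)=\fint_{\partial B_{r}}v$. Averaging the system and combining Jensen's inequality with the Harnack inequality gives $-\Delta_{r}\bar u=\bar v$ and $c\,r^{\alpha}\bar u^{p}\le-\Delta_{r}\bar v\le C\,r^{\alpha}\bar u^{p}$, where $\Delta_{r}$ is the radial Laplacian. An elementary ODE analysis (monotonicity of $r^{n-1}\bar u'$ and $r^{n-1}\bar v'$ together with $\bar u\le Cr^{-\gamma}$, $\bar v\le Cr^{-\gamma-2}$ and $\gamma+2<n-2$) shows that $\lim_{r\to0}\bar u(r)$ exists in $(0,+\infty]$; if it is finite then $u$ is bounded near $0$ by the Harnack inequality, so $g:=|x|^{\alpha}u^{p}\in L^{q}_{loc}$ for some $q>n/4$ (here $-4<\alpha<4$ is used, so $n/|\alpha|>n/4$), the distributional equation $(-\Delta)^{2}u=g$ holds across $0$ because $u,v=o(|x|^{2-n})$, hence $u\in W^{4,q}_{loc}\hookrightarrow C^{0}$ near $0$ and the singularity is removable — a contradiction. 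So $\bar u(r)\to+\infty$. To pin the rate I would use the Emden--Fowler substitution $t=-\log r$, $\psi(t)=r^{\gamma}\bar u(r)$, under which the system becomes autonomous up to harmless factors from Jensen/Harnack and $\psi$, with its derivatives, is bounded; introducing the Pohozaev-type energy $E(t)$ naturally attached to this autonomous fourth order problem and proving it monotone in $t$ — the monotonicity formula — forces $E$ to converge, rules out oscillatory behaviour, and shows the $\omega$-limit set of the trajectory consists of nonnegative critical points, the only ones being $\psi\equiv0$ and $\psi\equiv A_{0}^{1/(p-1)}$ with $A_{0}=\gamma(\gamma+2)(n-2-\gamma)(n-4-\gamma)>0$. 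If $\psi(t)\to0$, linearizing at the equilibrium $0$ puts $\psi$ on the stable manifold with exponential decay, i.e.\ $\bar u(r)=O(r^{-\gamma+\varepsilon})$; feeding this improved bound back into $(-\Delta)^{2}u=|x|^{\alpha}u^{p}$ and iterating a Newtonian-potential estimate drives the exponent strictly below $0$ in finitely many steps, so $u$ becomes bounded and the singularity removable, again a contradiction. Hence $\psi$ stays bounded away from $0$, so $\bar u(r)\ge c\,r^{-\gamma}$, and the Harnack inequality upgrades this to $u(x)\ge C_{1}|x|^{-\gamma}$.

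I expect the main obstacle to lie in the last part: constructing the correct monotonicity formula for the non-autonomous fourth order equation — the weight $|x|^{\alpha}$ becomes constant-coefficient only after the Emden--Fowler substitution, and one works with differential inequalities rather than an exact ODE, so the resulting error terms must be controlled — and then carrying out the asymptotic analysis of the autonomous flow with the range $-4<\alpha<4$ tracked at each step. The degeneration case in the Harnack argument is also delicate, and it is precisely there that the super polyharmonic property of Theorem~\ref{thm1.2} is indispensable.
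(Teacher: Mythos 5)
The central gap is that you pass to the radial averages $\bar u$, $\bar v$ \emph{before} invoking the energy. Once you average the system and use Jensen plus Harnack, you are left with differential inequalities, and a Pohozaev-type energy built on these is no longer exactly monotone: its derivative carries two-sided error terms of the same order as the dissipation, so you cannot conclude that $E$ converges, that $\partial_t\psi\to 0$, or that the $\omega$-limit set of the trajectory consists of equilibria. The paper avoids this entirely by keeping the angular variable and working with the exact Emden--Fowler PDE \eqref{eq3} for $w(t,\theta)=|x|^{\gamma}u(|x|,\theta)$ on $(-\infty,0)\times\mathbb{S}^{n-1}$; the energy $E(t;w)$ of \eqref{Monf} then has the \emph{exact} derivative
$A_3\int_{\mathbb{S}^{n-1}}[(\partial_{tt}w)^2+|\partial_t\nabla_\theta w|^2]\,d\sigma-A_1\int_{\mathbb{S}^{n-1}}(\partial_t w)^2\,d\sigma$,
sign-definite by Lemma~\ref{SignA}. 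You should keep $w$ and the exact energy, and only extract information about $\bar u$ at the end.

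A second genuine gap is the critical exponent: your range $\frac{n+\alpha}{n-4}<p<\frac{n+4}{n-4}$ with $-4<\alpha<4$ contains $p=\frac{n+4+2\alpha}{n-4}$ whenever $-4<\alpha<0$, and there $A_1=A_3=0$, so the energy is exactly conserved and your ``monotonicity rules out oscillation'' step says nothing. The linearization at $\psi\equiv 0$ no longer has a hyperbolic splitting, so the stable-manifold/exponential-decay argument you use to promote $\psi\to 0$ to removability does not apply. The paper handles this exponent in a separate Case~2 of Proposition~\ref{lim}, via a blow-up of the integral representation and a sharp contradiction $0=\tfrac{A_0}{2}|\mathbb{S}^{n-1}|>0$; your proposal has no substitute. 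Two smaller issues: your Harnack-by-compactness sketch still needs a normalization surviving the limit before the strong maximum principle rules out $u_0\equiv 0$ (the paper instead derives Harnack from the integral representation \eqref{IE02} and Proposition~\ref{HarR}, with Theorem~\ref{thm1.2} guaranteeing the harmonic remainder $h$ has a positive lower bound); and your route to removability differs from the paper's test-function plus Regularity-Lifting argument in Proposition~\ref{Remov} — plausible in spirit, but the $\alpha$-dependent doubly-weighted HLS exponents have to be tracked carefully, which is exactly where the paper's bootstrap splits into the two cases $-4<\alpha\le 0$ and $0\le\alpha<4$.
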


Furthermore, we can show the precise asymptotic behavior of singular solutions to \eqref{eq1} for $m=2$ as follows. Remark that our proof of Theorem \ref{mth1} does not depend on Theorems \ref{thm1.2} and \ref{thm1.3}.
\begin{thm}\label{mth1}
Let $n>4$,  $-4 < \alpha \leq 0$, $\frac{n+\alpha}{n-4}<p<\frac{n+4+\alpha}{n-4}$ and $p\neq \frac{n+4+2\alpha}{n-4}$. Assume that $u\in C^{4}(B_{1}(0)\setminus\{0\})$ is a nonnegative solution of \eqref{eq1} with $m=2$. Then either $x=0$ is a removable singularity or there holds
$$\lim_{|x|\rightarrow0}|x|^{\frac{4+\alpha}{p-1}}u(x)=A_{0}^{\frac{1}{p-1}},$$
where
$$A_{0}:=\frac{4+\alpha}{p-1}\bigg[ \Big(\frac{4+\alpha}{p-1}\Big)^{3}-2(n-4)\Big( \frac{4+\alpha}{p-1} \Big)^{2}+(n^{2}-10n+20)\Big(\frac{4+\alpha}{p-1} \Big)+2(n-2)(n-4)\bigg].
$$
\end{thm}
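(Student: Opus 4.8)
The plan is to pass to Emden--Fowler (cylindrical) coordinates, reduce to an autonomous fourth order equation on a half-cylinder, establish asymptotic radial symmetry near the origin, and then run a Pohozaev-type monotonicity argument on the resulting ordinary differential equation. Put $\gamma:=\frac{4+\alpha}{p-1}$, $r=|x|$, $t=-\ln r$ and $v(t,\omega):=|x|^{\gamma}u(x)$ on $\mathcal{C}:=(T_{0},\infty)\times\mathbb{S}^{n-1}$. A direct computation gives $\Delta u=r^{-\gamma-2}L_{1}v$ and $\Delta^{2}u=r^{-\gamma-4}L_{2}L_{1}v$, where $L_{j}=\partial_{t}^{2}+a_{j}\partial_{t}+b_{j}+\Delta_{\omega}$ with $a_{1}=2\gamma+2-n$, $a_{2}=2\gamma+6-n$, $b_{1}=\gamma(\gamma+2-n)$, $b_{2}=(\gamma+2)(\gamma+4-n)$; since $\alpha-\gamma p=-\gamma-4$, the equation becomes the autonomous problem $L_{2}L_{1}v=v^{p}$ on $\mathcal{C}$. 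Its bounded nonnegative $t$-independent radial solutions are exactly $v\equiv0$ and $v\equiv A_{0}^{1/(p-1)}$, because $L_{j}$ sends a constant $c$ to $b_{j}c$ and $b_{1}b_{2}=\gamma(\gamma+2)(n-2-\gamma)(n-4-\gamma)=A_{0}>0$ in the stated range (here $\gamma<n-4$, i.e.\ $p>\frac{n+\alpha}{n-4}$, is used); this also confirms the value of $A_{0}$ and that $A_{0}^{1/(p-1)}|x|^{-\gamma}$ is an exact singular solution. By Theorem~\ref{thm1.1}, which needs no sign hypothesis, $v$ and its derivatives up to order $3$ are bounded on $\mathcal{C}$, and interior Schauder estimates for the equation upgrade this to uniform $C^{4,\beta}$ bounds. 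Note that the excluded exponent $p=\frac{n+4+2\alpha}{n-4}$ is precisely the Hardy--Sobolev exponent, equivalently $\gamma=\frac{n-4}{2}$, $a_{1}+a_{2}=0$, $L_{2}=L_{1}^{*}$; there the equation is variational and translation invariant, a Hamiltonian is conserved, and the conclusion genuinely fails.

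Next I would prove asymptotic radial symmetry near the origin. Writing $\bar v(t)=\fint_{\mathbb{S}^{n-1}}v(t,\cdot)$ and $w=v-\bar v$, one has $\fint_{\mathbb{S}^{n-1}}w(t,\cdot)=0$ and $L_{2}L_{1}w=v^{p}-\overline{v^{p}}$ with $\|v^{p}-\overline{v^{p}}\|_{L^{2}(\mathbb{S}^{n-1})}\le C\|w(t,\cdot)\|_{L^{2}(\mathbb{S}^{n-1})}$ (using that $v$ is bounded and $p>1$). On every nonconstant spherical mode $k\ge1$ the indicial roots of the linearization, i.e.\ the zeros in $\lambda$ of $(\lambda^{2}+a_{1}\lambda+b_{1}-\mu_{k})(\lambda^{2}+a_{2}\lambda+b_{2}-\mu_{k})$ with $\mu_{k}=k(k+n-2)$, are real (each discriminant equals $(n-2)^{2}+4\mu_{k}$) and nonzero (as $b_{j}<0\le\mu_{k}$). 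Exploiting this hyperbolicity together with the spectral gap $\mu_{1}=n-1>0$ of $-\Delta_{\omega}$, a standard energy/differential-inequality argument (in the spirit of \cite{KMPS1999}) and the a priori bounds yield $\|w(t,\cdot)\|_{C^{4}(\mathbb{S}^{n-1})}\le Ce^{-\delta t}$ for some $\delta>0$. Hence $\bar v$ solves the fourth order ODE $P(\partial_{t})\bar v=\bar v^{p}+h(t)$ with $P(\lambda)=(\lambda^{2}+a_{1}\lambda+b_{1})(\lambda^{2}+a_{2}\lambda+b_{2})$ and $|h(t)|\le Ce^{-\delta t}$.

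The core of the argument is a monotonicity formula for this reduced ODE, in the spirit of Yang~\cite{Y2020}. Writing the equation as the system $Z=-(\bar v''+a_{1}\bar v'+b_{1}\bar v)$, $-(Z''+a_{2}Z'+b_{2}Z)=\bar v^{p}+h$, I would construct an explicit quadratic expression $E(t)$ in $(\bar v,\bar v',\bar v'',\bar v''',Z,Z')$ minus $\tfrac{1}{p+1}\bar v^{p+1}$ such that
$$E'(t)=-\kappa\left[(\bar v''')^{2}+(\bar v'')^{2}+(\bar v')^{2}+\cdots\right]+O(e^{-\delta t}),$$
a fixed multiple of a sum of squares up to an integrable error, where $\kappa$ has the sign of the net damping $a_{1}+a_{2}=4\gamma+8-2n$ — nonzero precisely because $p\neq\frac{n+4+2\alpha}{n-4}$. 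Since $E$ is bounded by the estimates above, $E(t)$ converges and the sum of squares is integrable; together with the uniform $C^{4}$ bounds and Barbălat's lemma this forces $\bar v'(t),\bar v''(t),\bar v'''(t)\to0$. Thus the $\omega$-limit set in $\mathbb{R}^{4}$ of $t\mapsto(\bar v,\bar v',\bar v'',\bar v''')(t)$ is a nonempty connected set, invariant for the limit ($h\equiv0$) flow and contained in $\{(\bar v',\bar v'',\bar v''')=0\}$; invariance then forces it to consist of equilibria $y\equiv c$ with $c^{p}=A_{0}c$, i.e.\ of points with $c\in\{0,A_{0}^{1/(p-1)}\}$, so by connectedness it is a single point. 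Hence $\bar v(t)\to0$ or $\bar v(t)\to A_{0}^{1/(p-1)}$, and by the previous paragraph $v(t,\cdot)$ converges to the same constant uniformly in $\omega$.

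Finally I would close the dichotomy. If $v(t,\cdot)\to0$, then $u(x)=o(|x|^{-\gamma})$, hence $u(x)=o(|x|^{4-n})$ as $|x|\to0$ (again using $\gamma<n-4$), so no fundamental-solution term is present; since moreover $u,|x|^{\alpha}u^{p}\in L^{1}_{loc}(B_{1})$ (as $\alpha>-4>-n$), $u$ extends to a distributional solution in $B_{1}$, which a Brezis--Kato/Moser bootstrap upgrades to $u\in C^{4}(B_{1})$: the singularity is removable. Conversely, if $x=0$ is removable then $u$ is bounded near $0$ and $v=r^{\gamma}u\to0$; so the two cases are mutually exclusive and exhaustive. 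In the remaining case $v(t,\cdot)\to A_{0}^{1/(p-1)}$, which unwinds to $\lim_{|x|\to0}|x|^{\frac{4+\alpha}{p-1}}u(x)=A_{0}^{1/(p-1)}$, as claimed. The main obstacle is the monotonicity formula: producing $E(t)$ with a manifestly sign-definite principal part that controls enough derivatives of $\bar v$ — the algebra is heavy, and this is exactly where the hypothesis $p\neq\frac{n+4+2\alpha}{n-4}$ is indispensable; a secondary difficulty is making the exponential decay of the oscillation $w$ in the second step hold uniformly over the full admissible range of $(p,\alpha)$.
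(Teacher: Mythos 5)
Your proposal shares the paper's starting point (Emden--Fowler transformation, a monotonicity formula keyed to the sign of the net damping, and the algebraic identification of the constant $A_0$), and your observation that $A_0$ factors as $b_1b_2=\gamma(\gamma+2)(n-2-\gamma)(n-4-\gamma)$ with $\gamma=\frac{4+\alpha}{p-1}$ is correct and a nice way to see why $A_0>0$ for $\gamma<n-4$. However, the overall architecture is genuinely different from the paper's, and it has a gap at the step that does the heavy lifting.

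The gap is in your second step, where you claim to prove exponential decay of the oscillation $w=v-\bar v$ ``by a standard energy/differential-inequality argument in the spirit of \cite{KMPS1999}.'' The KMPS-type analysis of indicial roots on nonzero spherical modes gives refined (exponential) asymptotics \emph{once asymptotic radial symmetry near the singularity is already known}; in \cite{KMPS1999} that input comes from Caffarelli--Gidas--Spruck moving planes. Knowing only that the indicial exponents on modes $k\ge1$ are real and nonzero does not by itself force a bounded solution of $L_2L_1 w = (v^p-\overline{v^p})$ to decay: the right-hand side is controlled by $C\|w\|$ with $C\sim p\,\sup v^{p-1}$, which is of order $pA_0$ and need not be small compared to the spectral gap, so growing or oscillatory modes are not ruled out a priori. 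No local asymptotic radial symmetry result for \eqref{eq1} with $m=2$ and $\alpha\ne0$ is available without the super polyharmonic sign assumption \eqref{SPH}, and establishing it directly is essentially the open problem the paper is circumventing. Without exponential (or at least integrable) decay of $w$, the perturbed ODE $P(\partial_t)\bar v = \bar v^p + h$ has a non-integrable forcing and your Barb\u{a}lat/$\omega$-limit argument does not close.

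The paper's route avoids this entirely: the energy $E(t;w)$ in \eqref{Monf} is built for the full cylindrical PDE \eqref{eq3}, not for the spherical average, and Proposition \ref{Mon01} shows $E$ is monotone with no radial-symmetry hypothesis. One then takes blow-up limits $u^\lambda\to u^0$ (compactness from Theorem \ref{thm1.1}); the limit $u^0$ solves the equation on all of $\mathbb{R}^n\setminus\{0\}$, and it is \emph{this global limit} that is radially symmetric, by the moving-sphere result of Proposition \ref{pro2}. This is why the stated range in Theorem \ref{mth1} is $p<\frac{n+4+\alpha}{n-4}$ rather than $p<\frac{n+4}{n-4}$: that is exactly the applicability range of Proposition \ref{pro2} with $m=2$. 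Scaling invariance of $\widetilde E$ then forces $E$ to be constant along the blow-up, so by Lemma \ref{SignA} ($A_1,A_3$ of definite opposite sign whenever $p\ne\frac{n+4+2\alpha}{n-4}$) the limit is $t$-independent, i.e.\ $w^0\equiv 0$ or $w^0\equiv A_0^{1/(p-1)}$; finally, convergence of $\widetilde E(r;u)$ singles out one of the two. Your final removability step (if $v\to0$ then $x=0$ is removable) is in the spirit of the paper's Proposition \ref{Remov}, though the actual bootstrap there goes through the regularity lifting theorem of Chen--Li and the doubly weighted Hardy--Littlewood--Sobolev inequality rather than Brezis--Kato, and that part of your sketch would need to be filled in along similar lines. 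In short: to make your route rigorous you would have to prove asymptotic radial symmetry near the origin without sign assumptions, which is precisely what the paper's blow-up-plus-Liouville strategy is designed to avoid.
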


\medskip

This paper is organized as follows. In Section \ref{sec2}, we recall some classical results which will be used in our arguments. In Section \ref{sec3}, we show the upper bound in Theorem \ref{thm1.1}, and establish the super polyharmonic properties in Theorem \ref{thm1.2}. In Section \ref{sec4}, we prove Theorems \ref{thm1.3} and \ref{mth1} for the fourth order equation.

\section{Preliminaries}\label{sec2}

In this section, for the reader's convenience, we recall some classical results which will be used in our arguments. Let us start with a classification result of polyharmonic equations in Wei-Xu \cite{WX1999}.

\begin{pro}[\cite{WX1999}]\label{pro1}
Suppose that $u$ is a nonnegative classical solution of
$$(-\Delta)^{m}u=u^{p} \quad\mbox{in}~ \mathbb{R}^{n} $$
for $1<p<\frac{n+2m}{n-2m}$. Then $u\equiv0$ in $\mathbb{R}^{n}$.
\end{pro}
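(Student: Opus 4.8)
The plan is to carry out the classical three-step argument going back to Wei--Xu \cite{WX1999} (compare also \cite{CL13,CLO06,Lin}): establish the \emph{super polyharmonic property}, upgrade it to an integral equation, and then eliminate nontrivial solutions by a Pohozaev-type identity whose coefficient is nonzero precisely in the subcritical regime. \textbf{Step 1: the super polyharmonic property, $(-\Delta)^k u\ge 0$ for $1\le k\le m-1$.} Since $u$ is smooth at the origin, the spherical averages $w_k(r):=\fint_{\partial B_r}(-\Delta)^k u$ ($0\le k\le m-1$, with $w_0=\bar u$) are $C^2$ up to $r=0$, satisfy $w_k'(0)=0$, and obey the cooperative radial system $-\Delta_r w_k=w_{k+1}$ for $0\le k\le m-2$ and $-\Delta_r w_{m-1}=\overline{u^p}\ge\bar u^p=w_0^p$ (by Jensen), where $\Delta_r=\partial_{rr}+\tfrac{n-1}{r}\partial_r$. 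From $-\Delta_r w_{m-1}\ge 0$ and $w_{m-1}'(0)=0$ one gets $r^{n-1}w_{m-1}'\le 0$, so $w_{m-1}$ is nonincreasing; if $w_{m-1}(r_0)<0$, then $w_{m-1}\le-\delta<0$ on $[r_0,\infty)$. Feeding this down the chain and integrating twice at each step yields, for large $r$, one-sided bounds in which $w_{m-1-j}(r)$ has order $r^{2j}$ and sign $(-1)^{j+1}$. Tracing down to $w_0=\bar u$: for one parity of $m$ this forces $\bar u(r)\to-\infty$, impossible since $u\ge 0$; for the other parity it forces $\bar u(r)\gtrsim r^{2(m-1)}$, whence $\int_{B_R}u^p\gtrsim\int_{R/2}^R r^{n-1}\bar u(r)^p\,dr\gtrsim R^{\,n+2(m-1)p}$. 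But testing $(-\Delta)^m u=u^p$ against $\xi(|x|/R)^\kappa$ (with $\xi$ a cutoff and $\kappa$ large) and applying H\"older's inequality gives the nonlinear capacity bound $\int_{B_R}u^p\le CR^{\,n-2m}$ --- a contradiction for $R$ large. Hence $w_{m-1}\ge 0$; repeating the argument downward for $w_{m-2},\dots,w_1$ gives the claim, with strict inequality unless $u\equiv 0$ by the strong maximum principle.

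\textbf{Step 2: integral representation.} Given Step 1, each $v_k:=(-\Delta)^k u$ ($0\le k\le m-1$, and $v_m:=u^p$) is nonnegative and superharmonic with $-\Delta v_k=v_{k+1}$. By the Riesz decomposition, $v_k=c_n\,|\cdot|^{2-n}*v_{k+1}+h_k$ with $h_k\ge 0$ harmonic, and finiteness of the Newtonian potential forces $\int_{\mathbb{R}^n}\frac{v_{k+1}(y)}{1+|y|^{n-2}}\,dy<\infty$. Since $v_{k+1}\ge h_{k+1}$ while $\int_{\mathbb{R}^n}\frac{dy}{1+|y|^{n-2}}=\infty$, every $h_k$ with $k\ge 1$ vanishes; and if $h_0\equiv c>0$ then $u\ge c$, so $v_{m-1}=c_n\,|\cdot|^{2-n}*u^p\ge c_n c^p\big(|\cdot|^{2-n}*1\big)=\infty$, impossible. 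Hence all $h_k\equiv 0$, and composing the $m$ representations yields
\[
u(x)=C_{n,m}\int_{\mathbb{R}^n}\frac{u(y)^p}{|x-y|^{n-2m}}\,dy,\qquad\text{with}\quad\int_{\mathbb{R}^n}\frac{u(y)^p}{1+|y|^{n-2m}}\,dy<\infty.
\]

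\textbf{Step 3: nonexistence, and the main obstacle.} Suppose $u\not\equiv 0$. For $1<p<\frac{n}{n-2m}$ the capacity bound of Step 1 reads $\int_{B_R}u^p\le CR^{\,n-2mp'}\to 0$ (with $p'=\frac{p}{p-1}$, so $n-2mp'<0$), forcing $u\equiv 0$, a contradiction. For $\frac{n}{n-2m}\le p<\frac{n+2m}{n-2m}$, a standard decay bootstrap on the integral equation gives $u(x)=O\big(|x|^{-2m/(p-1)}\big)$ at infinity, so $u\in L^{p+1}(\mathbb{R}^n)$, and the Pohozaev identity for the integral equation (Chen--Li--Ou \cite{CLO06}), obtained by pairing with $x\cdot\nabla u+\frac{n-2m}{2}u$, gives
\[
\Big(\frac{n-2m}{2}-\frac{n}{p+1}\Big)\int_{\mathbb{R}^n}u^{p+1}\,dx=0 .
\]
Since $p<\frac{n+2m}{n-2m}$ the coefficient is strictly negative, so again $u\equiv 0$. (Equivalently, the method of moving planes applied to the integral equation forces $u$ to be radially symmetric and decreasing about every point, hence constant, hence $0$.) I expect Step 1 to be the real obstacle: the ODE analysis of the spherical averages is elementary in principle, but one must carefully match the alternating polynomial growth of the $w_k$ against the capacity bound for all $m$ and carry out the downward induction on $w_{m-1},\dots,w_1$ cleanly. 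By contrast, the hypothesis $p<\frac{n+2m}{n-2m}$ itself enters only in Step 3, where it makes the Pohozaev coefficient nonzero --- precisely what separates the trivial solution here from the Aubin--Talenti bubbles of the critical equation.
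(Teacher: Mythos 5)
The paper does not prove this proposition --- it is cited directly from Wei--Xu \cite{WX1999} as background material --- so the comparison is necessarily with the cited source rather than with an argument in the text. Your outline reproduces the standard Wei--Xu / Chen--Li--Ou skeleton (super polyharmonic property, Riesz decomposition into a pure Riesz-potential integral equation, then elimination of nontrivial solutions), which is indeed the right framework, and your identification of Step~1 as the technical workhorse is accurate.

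That said, Step~3 as written has a genuine gap. From the integral equation $u=c_{n,m}|\cdot|^{2m-n}*u^p$ and the local contribution of a fixed ball one gets the \emph{lower} bound $u(x)\gtrsim |x|^{2m-n}$ for any nontrivial solution; the correct upper decay that a bootstrap produces is also $u(x)=O(|x|^{2m-n})$, \emph{not} $O(|x|^{-2m/(p-1)})$. Indeed for $p<\frac{n}{n-2m}$ the rate $|x|^{-2m/(p-1)}$ would be \emph{faster} than $|x|^{2m-n}$, contradicting the lower bound, and for $p\ge\frac{n}{n-2m}$ it is slower and therefore not the sharp statement you need. With the correct rate $|x|^{2m-n}$, the finiteness condition $\int u^p(y)(1+|y|)^{2m-n}\,dy<\infty$ already forces $u\equiv 0$ whenever $(p+1)(n-2m)\le n$, so the range $1<p\le\frac{2m}{n-2m}$ (nonempty when $2m<n<4m$) is handled without any Pohozaev argument. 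For the remaining $p$, the PDE Pohozaev identity $\big(\frac{n-2m}{2}-\frac{n}{p+1}\big)\int u^{p+1}=0$ requires the boundary terms at infinity to vanish, which in turn needs decay of $\nabla^k u$ up to order $2m-1$; that is exactly where the argument is not ``standard'' and must be carried out. The cleaner and commonly cited route, which you mention only parenthetically, is moving planes/spheres applied directly to the integral equation: subcriticality forces $u$ to be radially symmetric and decreasing about \emph{every} center, hence constant, hence zero, with no boundary-term analysis. You should make that the main line of Step~3. Finally, a minor slip in Step~1: the nonlinear-capacity bound should read $\int_{B_R}u^p\le CR^{\,n-2mp'}$ with $p'=\frac{p}{p-1}$, as you write correctly in Step~3 (the version $CR^{n-2m}$ is weaker but still contradicts the $R^{\,n+2(m-1)p}$ lower bound, so no harm done there).
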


\begin{pro}[\cite{Y2021}]\label{pro2}
Let $m\geq1$ be an integer and $-2m<\alpha\leq0$. Suppose that $u\in C(\mathbb{R}^{n}\setminus\{0\})$ is a nonnegative distributional solution of
\begin{align}\label{eq-2}
(-\Delta)^{m}u=|x|^{\alpha}u^{p} \quad \mbox{in} ~ \mathbb{R}^{n}\setminus\{0\}
\end{align}
with $\frac{n+\alpha}{n-2m}\leq p\leq \frac{n+2m+\alpha}{n-2m}$. For $\alpha =0$ and $p=\frac{n+2m}{n-2m}$, suppose also that $u$ has a non-removable singularity at the origin. Then $u$ is radially symmetric with respect to the origin.
\end{pro}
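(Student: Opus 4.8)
\noindent\emph{Proof sketch (plan).}
This is the radial-symmetry statement of \cite{Y2021}. I would prove it by converting the distributional equation on $\mathbb{R}^n\setminus\{0\}$ into an integral equation on $\mathbb{R}^n$ and then applying the method of moving spheres centered at the origin. The plan has three stages: (I) use the super polyharmonic property together with a representation formula to write $u$ as a Riesz potential of $|x|^{\alpha}u^{p}$ plus a nonnegative constant; (II) run moving spheres about the origin on this integral equation to force $u$ to be radial about the origin; (III) treat separately the critical autonomous exponent $\alpha=0,\ p=\frac{n+2m}{n-2m}$, where the non-removability hypothesis is needed to exclude translated standard bubbles. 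For $m=1$ stage (I) is essentially vacuous and only (II)--(III) remain; the genuinely higher-order ingredient is the super polyharmonic property.

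\emph{Stage I.} Since $u\ge 0$ solves the equation on the whole punctured space and $p\ge\tfrac{n+\alpha}{n-2m}$, one first proves $(-\Delta)^k u>0$ in $\mathbb{R}^n\setminus\{0\}$ for $1\le k\le m-1$, the global analogue of Theorem~\ref{thm1.2}: testing the equation against spherical averages produces a coupled system of ODE inequalities for $\overline{(-\Delta)^{k-1}u}$, and the Serrin-type lower bound on $p$ is precisely what excludes a sign change of $(-\Delta)^k\bar u$ as $r\to 0$ or $r\to\infty$. Granting this, a Brezis--Lions-type iteration along the chain $u,-\Delta u,\dots,(-\Delta)^{m-1}u$ extends $u$ to a distributional solution on $\mathbb{R}^n$ of $(-\Delta)^m u=|x|^{\alpha}u^{p}+\sum_{j=1}^{m}a_j(-\Delta)^{m-j}\delta_0$ with $a_j\ge0$; each point mass would inject into $u$ a term proportional to $|x|^{2j-n}$, strictly more singular at the origin than the bound $u(x)\le C|x|^{-\frac{2m+\alpha}{p-1}}$ of Theorem~\ref{thm1.1} (for $\alpha=0,\ p=\frac{n+2m}{n-2m}$ one uses instead the blow-up rate of Jin--Xiong \cite{JX2021}, available because \eqref{SPH} has just been verified), since $\frac{2m+\alpha}{p-1}\le n-2m$ with a logarithmic gain at the Serrin exponent. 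Hence all $a_j=0$; and the remaining polyharmonic polynomial must be a nonnegative constant $c_\infty$, as $u\ge0$ and the potential of $|y|^{\alpha}u^{p}$ is nonnegative, while a nonconstant polyharmonic polynomial is incompatible with the integrability of that potential. This produces the integral equation $u(x)=c_{n,m}\int_{\mathbb{R}^n}|x-y|^{2m-n}|y|^{\alpha}u^{p}(y)\,dy+c_\infty$.

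\emph{Stages II--III.} For $\lambda>0$ put $u_\lambda(x)=(\lambda/|x|)^{n-2m}u(\lambda^2x/|x|^2)$. Substituting into the integral equation gives the usual reflection identity: for $|x|>\lambda$, $u(x)-u_\lambda(x)$ equals an integral over $\{|y|>\lambda\}$ of a kernel that is positive there against a nonlinearity difference governed by $|y|^{\alpha}u^{p}(y)-(\lambda/|y|)^{\sigma}|y|^{\alpha}u_\lambda^{p}(y)$, where $\sigma=(n+2m+2\alpha)-p(n-2m)$. Since $-2m<\alpha\le0$ and $p\le\frac{n+2m+\alpha}{n-2m}$, the sign of $\sigma$ is under control ($\sigma\ge0$ when $p\le P_S(m,\alpha)$ and $\sigma\le0$ otherwise), and in either regime the weight $(\lambda/|y|)^{\sigma}$ pushes the comparison in the usable direction. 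Starting the process for small $\lambda$ --- using that $u_\lambda$ is less singular than $u$ at the origin and that the two sides coincide on $\partial B_\lambda$ --- and then sliding $\lambda$, one concludes, exactly as in the moving-sphere treatments of related weighted problems (cf.\ Li \cite{Li} and Li \cite{Lim}, and Chen--Li--Ou \cite{CLO06} for the integral-equation mechanics), that $u$ is radially symmetric about the origin. The sole case needing extra care is $\alpha=0,\ p=\frac{n+2m}{n-2m}$, where the equation is conformally invariant and the Kelvin-invariant solutions are the standard bubbles, radial only about their own centers; there the hypothesis that $x=0$ is a non-removable singularity discards them, leaving only the (radial) Fowler-type solutions.

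\emph{Main obstacle.} The crux is Stage I together with the ``closing'' of Stage II, not the routine steps. Establishing the super polyharmonic property \emph{uniformly on the whole punctured space} --- i.e.\ controlling the spherical-average ODE system simultaneously near $0$ and near $\infty$ --- and then extracting a clean integral equation with no spurious point-mass or polynomial terms, is exactly the difficulty that is absent when $m=1$. The second delicate point is pushing the moving spheres far enough to obtain \emph{exact} (rather than merely asymptotic) radial symmetry, and, at the borderline exponent $\alpha=0,\ p=\frac{n+2m}{n-2m}$, correctly using the non-removability hypothesis to eliminate the translated bubbles.
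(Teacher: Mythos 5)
The paper does not prove Proposition~\ref{pro2}: it is quoted directly from the cited reference \cite{Y2021} and invoked as a black box, so there is no internal proof to compare your attempt against. That said, your plan --- establish super polyharmonicity on all of $\mathbb{R}^{n}\setminus\{0\}$, extract a Riesz-potential integral representation $u(x)=c_{n,m}\int|x-y|^{2m-n}|y|^{\alpha}u^{p}(y)\,dy+c_{\infty}$ after ruling out the Dirac-type measures and the nonconstant polyharmonic polynomial, run moving spheres centered at the origin on this integral equation, and handle the conformal exponent $\alpha=0$, $p=\frac{n+2m}{n-2m}$ by invoking non-removability to discard translated bubbles --- is exactly the expected route and matches the structure of the argument in \cite{Y2021}.

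Two points deserve closer attention if you were to flesh this out. First, the elimination of the point masses $a_{j}(-\Delta)^{m-j}\delta_{0}$ at the Serrin endpoint $p=\frac{n+\alpha}{n-2m}$: there $\frac{2m+\alpha}{p-1}=n-2m$, so the upper bound $u\le C|x|^{-(n-2m)}$ only matches, rather than beats, the singularity $|x|^{2m-n}$ of the least-singular allowed measure term; the logarithmic refinement or a Brezis--Lions-type removability argument that you allude to is genuinely needed there and should not be waved through. Second, your Stage~II remark that ``the weight $(\lambda/|y|)^{\sigma}$ pushes the comparison in the usable direction in either regime'' is correct but hides the mechanism: for $P_{S}(m,\alpha)\le p\le\frac{n+2m+\alpha}{n-2m}$ the exponent $\sigma$ changes sign and one reduces to the complementary regime by a Kelvin transform of the integral equation --- precisely as the paper itself flags in the sentence following Proposition~\ref{pro2}. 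With those two caveats spelled out, I see no gap in your outline.
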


It is worth pointing out that $\frac{n+2m+\alpha}{n-2m}\geq \frac{n+2m+2\alpha}{n-2m}$ holds for $\alpha\leq 0$. For the supercritical case $\frac{n+2m+2\alpha}{n-2m}\leq p\leq \frac{n+2m+\alpha}{n-2m}$, the radial symmetry result in Proposition 2.2 holds via the Kelvin transformation.

We also need the following doubly weighted Hardy-Littlewood-Sobolev inequality, which was showed by Stein-Weiss \cite{SW58}. See also Lieb \cite{L1983}.

\begin{pro}[\cite{L1983,SW58}]\label{pro5}
Let $0<\lambda<n$, $1<p\leq q<\infty$, $0\leq\gamma<\frac{n}{p'}$ (with $\frac{1}{p}+\frac{1}{p'}=1$), $0\leq\beta<\frac{n}{q}$ and $\frac{1}{p}+\frac{\lambda+\gamma+\beta}{n}=1+\frac{1}{q}$. Let the function
$$V(x,y)=|x|^{-\beta}|x-y|^{-\lambda}|y|^{-\gamma}$$
be an integral kernel on $\mathbb{R}^{n}$. Then the map $f\rightarrow Vf:=\int_{\mathbb{R}^n} V(\cdot, y)f(y) dy$ is bounded from $L^{p}(\mathbb{R}^{n})$ to $L^{q}(\mathbb{R}^{n})$.
\end{pro}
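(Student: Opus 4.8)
The plan is to prove the boundedness directly, by a dyadic-annulus decomposition of both the source space and the target space that converts the continuous estimate into a discrete convolution inequality; the only genuinely analytic ingredient is the classical unweighted Hardy--Littlewood--Sobolev inequality (the case $\beta=\gamma=0$), which I take as known. Since the kernel is nonnegative, it suffices to prove $\|Vf\|_{L^q}\le C\|f\|_{L^p}$ for $0\le f$ (both sides then being meaningful in $[0,\infty]$; the general case follows from $|Vf|\le V|f|$). Put $A_j=\{x\in\mathbb{R}^n:2^j\le|x|<2^{j+1}\}$ for $j\in\mathbb{Z}$, set $f_k=f\,\mathbf{1}_{A_k}$, and for $x\in A_j$ write
$$Vf(x)=\sum_{k\in\mathbb{Z}}\int_{A_k}|x|^{-\beta}|x-y|^{-\lambda}|y|^{-\gamma}f(y)\,dy=:\sum_{k\in\mathbb{Z}}(T_{jk}f_k)(x).$$
With $b_k=\|f_k\|_{L^p}$ and $a_j=\|(Vf)\mathbf{1}_{A_j}\|_{L^q}$, so that $\|f\|_{L^p}=(\sum_k b_k^p)^{1/p}$ and $\|Vf\|_{L^q}=(\sum_j a_j^q)^{1/q}$, the goal reduces to the discrete bound $\|(a_j)\|_{\ell^q}\lesssim\|(b_k)\|_{\ell^p}$.

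For the off-diagonal blocks, $|j-k|\ge2$, I would use only crude size bounds on the kernel. If $k\le j-2$ then $|y|\le|x|/2$ on $A_j\times A_k$, so $|x-y|\approx|x|\approx2^j$; replacing each factor of the kernel by its order of magnitude and estimating $\int_{A_k}f\le|A_k|^{1/p'}b_k\approx2^{kn/p'}b_k$ by Hölder's inequality yields
$$\|T_{jk}f_k\|_{L^q(A_j)}\lesssim2^{j(n/q-\beta-\lambda)}\,2^{k(n/p'-\gamma)}\,b_k.$$
The scaling identity $\frac1p+\frac{\lambda+\gamma+\beta}{n}=1+\frac1q$ is exactly what forces the two exponents to cancel, $(n/q-\beta-\lambda)+(n/p'-\gamma)=0$, so this equals $2^{-(j-k)\delta}b_k$ with $\delta=\frac{n}{p'}-\gamma>0$ --- positive precisely because of the hypothesis $\gamma<n/p'$. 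The range $k\ge j+2$ is symmetric ($|x-y|\approx|y|\approx2^k$) and produces $2^{-(k-j)\varepsilon}b_k$ with $\varepsilon=\frac{n}{q}-\beta>0$, positive because $\beta<n/q$.

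For the diagonal blocks, $|j-k|\le1$, one has $|x|\approx|y|\approx2^j$, so the power weights are comparable to the constants $2^{-j\beta}$ and $2^{-j\gamma}$; rescaling $x=2^jx'$, $y=2^jy'$ and using that $V$ is homogeneous of degree $-(\lambda+\beta+\gamma)=-n(\frac1q+\frac1{p'})$, the uniform-in-$j$ estimate $\|T_{jk}f_k\|_{L^q(A_j)}\lesssim b_k$ reduces (all powers of $2^j$ cancelling, again by the scaling identity) to the boundedness, on a fixed compact annulus, of the truncated Riesz potential $g\mapsto\int_{|y'|\approx1}|x'-y'|^{-\lambda}g(y')\,dy'$ from $L^p$ to $L^q$. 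That in turn is a standard consequence of the unweighted Hardy--Littlewood--Sobolev inequality together with Hölder's inequality on a bounded set --- one only uses $\frac1q\ge\frac1p+\frac{\lambda}{n}-1$, which follows from $\beta,\gamma\ge0$ and the scaling identity; in the range $\lambda\le n/p'$, where the Riesz potential gains no integrability, it is simply Hölder's inequality against the locally $L^{p'}$ kernel. Collecting the three cases, $a_j\le\sum_k\|T_{jk}f_k\|_{L^q(A_j)}\lesssim\sum_k\rho_{j-k}b_k$ with $(\rho_\ell)_{\ell\in\mathbb{Z}}$ bounded near $0$ and exponentially decaying at infinity, hence $\rho\in\ell^1(\mathbb{Z})$; Young's inequality for sequences gives $\|(a_j)\|_{\ell^p}\lesssim\|\rho\|_{\ell^1}\|(b_k)\|_{\ell^p}=\|\rho\|_{\ell^1}\|f\|_{L^p}$, and since $p\le q$ the embedding $\ell^p\hookrightarrow\ell^q$ gives $\|Vf\|_{L^q}=\|(a_j)\|_{\ell^q}\le\|(a_j)\|_{\ell^p}\lesssim\|f\|_{L^p}$, as desired.

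The step demanding the most care is the diagonal block: there the gain of integrability cannot come from the weights, which are essentially constant on each annulus, so one genuinely uses the smoothing of the Riesz kernel, and one must check that the scaling relation together with $\beta,\gamma\ge0$ always leaves the target exponent $q$ admissible --- this is the single point where all of the hypotheses $1<p$, $\beta,\gamma\ge0$ and $\frac1p+\frac{\lambda+\gamma+\beta}{n}=1+\frac1q$ are used at once. I would also record that in the conformal case $p=q$, where the scaling relation reads $\lambda+\beta+\gamma=n$ and $V$ is homogeneous of degree $-n$, there is a one-line argument: Schur's test with the weight $h(x)=|x|^{-s}$ works, the two required inequalities reducing (after rotating and rescaling) to the finiteness of $\int_{\mathbb{R}^n}|e_1-z|^{-\lambda}|z|^{-\gamma-sp'}\,dz$ and $\int_{\mathbb{R}^n}|e_1-z|^{-\lambda}|z|^{-\beta-sp}\,dz$, and the set of admissible $s$ (namely $\max\{\beta/p',\gamma/p\}<s<\min\{(n-\gamma)/p',(n-\beta)/p\}$) is nonempty precisely because $\lambda>0$, $\beta<n/p$ and $\gamma<n/p'$. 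Finally, the above produces only a qualitative bound; the sharp value of the operator norm --- not needed anywhere in this paper --- is Lieb's theorem and rests on rearrangement (competing-symmetry) arguments rather than on this decomposition.
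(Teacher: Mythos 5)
The paper does not prove Proposition~2.3: it is the classical Stein--Weiss doubly weighted Hardy--Littlewood--Sobolev inequality, quoted directly from \cite{SW58,L1983} and used later as a black box in the removability argument of Proposition~4.4. Your dyadic proof is therefore not in competition with anything in the text; it is a correct, essentially self-contained reconstruction. The off-diagonal bookkeeping is right: in the two regimes $|j-k|\ge 2$ the scaling identity collapses the $2^j$- and $2^k$-powers into geometric decay at rates $n/p'-\gamma$ and $n/q-\beta$, both strictly positive by hypothesis --- which is precisely where those two hypotheses enter. The diagonal reduction, after rescaling, to a Riesz potential on a fixed compact annulus is also correct, and the inequality $\frac1q\ge\frac1p+\frac{\lambda}{n}-1$ (equality only when $\beta=\gamma=0$, the unweighted case) guarantees the target exponent is admissible. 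One small imprecision to record: at the endpoint $\lambda=n/p'$ the kernel $|x'-y'|^{-\lambda}$ is \emph{not} locally $L^{p'}$, so ``simply H\"older against the $L^{p'}$ kernel'' does not literally apply there; but since $q<\infty$ forces $\beta+\gamma>0$ at this endpoint, one has $\frac1q=\frac{\beta+\gamma}{n}>0=\frac1p+\frac{\lambda}{n}-1$ strictly, and Young's convolution inequality on the bounded annulus with the exponent $r$ determined by $1+\frac1q=\frac1r+\frac1p$ (so that $r<p'=n/\lambda$ and $\||z|^{-\lambda}\|_{L^r(\text{bdd})}<\infty$) closes the estimate without any endpoint use of HLS. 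Your Schur-test aside for the conformal case $p=q$ is correct and is in fact closer in spirit to the original Stein--Weiss argument, which proceeds by interpolation and Schur-type bounds; your dyadic decomposition is the more modern route and has the advantage of exhibiting transparently how each hypothesis is used. As you note, neither approach yields Lieb's sharp constant, which the paper does not need.
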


Next we recall the doubling lemma from Pol\'a\v{c}ik-Quittner-Souplet \cite{PQS2007}, which is important in the study of singularity and decay estimates.
\begin{pro}[\cite{PQS2007}]\label{pro6}
Suppose that $\emptyset\neq D\subset\Sigma\subset \mathbb{R}^n$, $\Sigma$ is closed and $\Gamma=\Sigma\setminus D$. Let $M: D\rightarrow(0,\infty)$ be bounded on compact subset of $D$. If for a fixed constant $k>0$, there exists $y\in D$ such that
\begin{align}
M(y)dist(y, \Gamma)>2k,\nonumber
\end{align}
then there exists $x\in D$ such that
\begin{align}
M(x)dist(x, \Gamma)>2k, \indent M(x)\geq M(y),\nonumber
\end{align}
and for all $z\in D\cap B_{kM^{-1}(x)}(x)$,
\begin{align}
M(z)\leq 2M(x).\nonumber
\end{align}
\end{pro}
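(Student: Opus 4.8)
The plan is to argue by contradiction, producing a runaway sequence that violates the local boundedness of $M$. Suppose the assertion fails; then for every $x\in D$ at least one of the three properties (i) $M(x)\operatorname{dist}(x,\Gamma)>2k$, (ii) $M(x)\geq M(y)$, (iii) $M(z)\leq 2M(x)$ for all $z\in D\cap B_{kM^{-1}(x)}(x)$, must fail. Starting from $y_0:=y$, I would construct a sequence $(y_j)_{j\geq 0}\subset D$ inductively, maintaining the two invariants $M(y_j)\operatorname{dist}(y_j,\Gamma)>2k$ and $M(y_j)\geq 2^j M(y_0)$; the case $j=0$ is exactly the hypothesis on $y$. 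Given $y_j$ with these invariants, properties (i) and (ii) hold with $x=y_j$, so (iii) must fail: there is $y_{j+1}\in D\cap B_{kM^{-1}(y_j)}(y_j)$ with $M(y_{j+1})>2M(y_j)\geq 2^{j+1}M(y_0)$, and in particular $|y_{j+1}-y_j|<k/M(y_j)$. Thus the construction never stalls, and $M(y_j)\geq 2^j M(y_0)\to\infty$.

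The only step requiring care is verifying that the distance invariant $M(y_{j+1})\operatorname{dist}(y_{j+1},\Gamma)>2k$ is inherited. Using $M(y_i)\geq 2^i M(y_0)$ to bound the displacements geometrically,
$$\sum_{i=0}^{j}|y_{i+1}-y_i|<\sum_{i=0}^{j}\frac{k}{M(y_i)}\leq\frac{k}{M(y_0)}\sum_{i=0}^{j}2^{-i}=\frac{2k}{M(y_0)}-\frac{k}{2^{j}M(y_0)},$$
so the triangle inequality gives $\operatorname{dist}(y_{j+1},\Gamma)\geq\operatorname{dist}(y_0,\Gamma)-\sum_{i=0}^{j}|y_{i+1}-y_i|>\operatorname{dist}(y_0,\Gamma)-\frac{2k}{M(y_0)}+\frac{k}{2^{j}M(y_0)}$. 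Multiplying by $M(y_{j+1})\geq 2^{j+1}M(y_0)$ and using $M(y_0)\operatorname{dist}(y_0,\Gamma)>2k$,
$$M(y_{j+1})\operatorname{dist}(y_{j+1},\Gamma)>2^{j+1}\big(M(y_0)\operatorname{dist}(y_0,\Gamma)-2k\big)+2k>2k,$$
which closes the induction.

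Finally I would extract the contradiction. The displacement bound above also shows $\sum_{i\geq 0}|y_{i+1}-y_i|<2k/M(y_0)<\operatorname{dist}(y_0,\Gamma)<\infty$, so $(y_j)$ is Cauchy and converges to some $y_\infty$, with every $y_j$ and $y_\infty$ lying within distance $2k/M(y_0)$ of $y_0$; hence $\operatorname{dist}(y_\infty,\Gamma)\geq\operatorname{dist}(y_0,\Gamma)-2k/M(y_0)>0$. Since each $y_j\in D\subset\Sigma$ and $\Sigma$ is closed, $y_\infty\in\Sigma$; and $\operatorname{dist}(y_\infty,\Gamma)>0$ forces $y_\infty\notin\Gamma=\Sigma\setminus D$, so $y_\infty\in D$. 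Therefore $K:=\{y_j:j\geq 0\}\cup\{y_\infty\}$ is a compact subset of $D$, on which $M$ is bounded by hypothesis — contradicting $M(y_j)\to\infty$.

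I expect the main (indeed essentially the only) obstacle to be the propagation of the invariant $M(y_j)\operatorname{dist}(y_j,\Gamma)>2k$ carried out in the second paragraph: one must couple the geometric growth of $M(y_j)$ with the geometric smallness of the steps so that the iterates never drift too close to $\Gamma$. Everything else is a routine Cauchy-sequence and compactness argument, with the mild subtlety that the limit point must be verified to lie in $D$ rather than merely in the closed set $\Sigma$.
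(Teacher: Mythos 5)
The paper does not prove this proposition; it is recalled verbatim from Pol\'a\v{c}ik--Quittner--Souplet \cite{PQS2007} and used as a black box, so there is no ``paper's own proof'' to compare against. Your argument is correct and is in fact essentially the original proof of the doubling lemma in \cite{PQS2007}: assuming the conclusion fails, you iterate the failure of the doubling bound to produce a sequence $(y_j)\subset D$ along which $M(y_j)\geq 2^{j}M(y_0)$ while $|y_{j+1}-y_j|<k/M(y_j)$, so that the steps are geometrically summable; the invariant $M(y_j)\operatorname{dist}(y_j,\Gamma)>2k$ is propagated exactly as you compute, the sequence is Cauchy with limit $y_\infty\in\Sigma$ at positive distance from $\Gamma$ (hence $y_\infty\in D$), and then $\{y_j\}\cup\{y_\infty\}$ is a compact subset of $D$ on which $M$ is unbounded, a contradiction. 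The only microscopic blemish is the chain $\sum|y_{i+1}-y_i|<2k/M(y_0)<\operatorname{dist}(y_0,\Gamma)<\infty$: the last bound can fail if $\Gamma=\emptyset$, but it is also unnecessary, since $2k/M(y_0)<\infty$ already gives the Cauchy property and the rest of the argument is unaffected.
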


At the end of this section, we state an important regularity lifting result established by Chen-Li \cite{CL2010}, which plays a key role in proving the removable singularity. Let $V$ be a Hausdorff topological vector space. Suppose that there are two extend norms defined in $V$,
$$
\| \cdot \|_{X}, \| \cdot \|_{Y} : V \rightarrow[0,\infty].
$$
Let
$$X:=\{x\in V : ~ \| x \|_{X}<\infty\} \quad \mbox{and} \quad Y:=\{y\in V :  ~ \| y \|_{Y} <\infty \}.$$
Then we have

\begin{pro}[\cite{CL2010}]\label{pro7}
Let $T$ be a contraction map from $X$ into itself and from $Y$ into itself. Assume that $f\in X$ and there exists a function $g\in Z:=X\cap Y$ such that $f=Tf+g$ in $X$. Then there holds $f \in Z$.
\end{pro}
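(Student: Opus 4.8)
\emph{Proof proposal.}
Put $\gamma:=\frac{4+\alpha}{p-1}$ and $L(s):=s(s+2)(s+2-n)(s+4-n)$, so that $(-\Delta)^{2}(|x|^{-s})=L(s)|x|^{-s-4}$; a direct computation identifies $L(\gamma)$ with $A_{0}$, hence $u_{0}(x):=c_{0}|x|^{-\gamma}$ with $c_{0}:=A_{0}^{1/(p-1)}$ is an explicit singular solution of \eqref{eq1} with $m=2$, and (since $p>\frac{n+\alpha}{n-4}$) we have $0<\gamma<n-4$ and $A_{0}>0$. The plan is to pass to the Emden--Fowler cylinder, carry out a blow-up analysis near the origin, identify every blow-up limit by combining the radial-symmetry Proposition~\ref{pro2} with an ODE argument, and finally upgrade subsequential convergence to a genuine limit; the case in which the blow-up limit is $0$ will correspond to a removable singularity. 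By Theorem~\ref{thm1.1}(i) we have $\sum_{i\le3}|x|^{\gamma+i}|\nabla^{i}u(x)|\le C$ on $B_{1/2}\setminus\{0\}$, hence $u\le C|x|^{-\gamma}$, $|\Delta u|\le C|x|^{-\gamma-2}$, and $|x|^{\alpha}u^{p}\in L^{1}_{\mathrm{loc}}(B_{1})$. Writing $r=|x|$, $\theta=x/r$, $t=-\ln r$ and $u(x)=r^{-\gamma}v(t,\theta)$, equation \eqref{eq1} with $m=2$ becomes the autonomous equation
\begin{equation*}
\mathcal{L}_{\gamma+2}\big(\mathcal{L}_{\gamma}v\big)=v^{p}\quad\text{on }(\ln2,\infty)\times\mathbb{S}^{n-1},\qquad
\mathcal{L}_{\sigma}:=-\partial_{tt}+(n-2-2\sigma)\partial_{t}+\sigma(n-2-\sigma)-\Delta_{\mathbb{S}^{n-1}},
\end{equation*}
and the a priori bounds say that $v\ge0$ and that $v,\ \partial_{t}v,\ \nabla_{\theta}v,\ \mathcal{L}_{\gamma}v$ are all bounded there.

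For $\lambda\in(0,\tfrac14)$ set $u_{\lambda}(y):=\lambda^{\gamma}u(\lambda y)$, a solution of \eqref{eq1} with $m=2$ on $B_{1/\lambda}\setminus\{0\}$ satisfying $u_{\lambda}(y)\le C|y|^{-\gamma}$ uniformly. By the scaled a priori bounds $\{u_{\lambda}\}$ is bounded in $C^{4}_{\mathrm{loc}}(\mathbb{R}^{n}\setminus\{0\})$, so along any sequence $\lambda_{j}\to0$ a subsequence converges in $C^{4}_{\mathrm{loc}}(\mathbb{R}^{n}\setminus\{0\})$ to some nonnegative $u_{*}$ solving $(-\Delta)^{2}u_{*}=|x|^{\alpha}u_{*}^{p}$ in $\mathbb{R}^{n}\setminus\{0\}$ with $u_{*}(y)\le C|y|^{-\gamma}$. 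Since $-4<\alpha\le0$ and $\frac{n+\alpha}{n-4}\le p\le\frac{n+4+\alpha}{n-4}$, Proposition~\ref{pro2} applies to $u_{*}$ (directly when $p\le\frac{n+4+2\alpha}{n-4}$, and via the Kelvin transform when $p>\frac{n+4+2\alpha}{n-4}$), so $u_{*}$ is radially symmetric about the origin. Consequently $\psi(t):=r^{\gamma}u_{*}(r)$ with $t=-\ln r\in\mathbb{R}$ is a bounded nonnegative solution on all of $\mathbb{R}$ of the autonomous fourth-order ODE $\bar{\mathcal{L}}_{\gamma+2}(\bar{\mathcal{L}}_{\gamma}\psi)=\psi^{p}$, where $\bar{\mathcal{L}}_{\sigma}g:=-g''+(n-2-2\sigma)g'+\sigma(n-2-\sigma)g$. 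The coefficient of $\psi'''$ in this ODE equals $2\big(2\gamma-(n-4)\big)$, which is nonzero precisely because $p\ne\frac{n+4+2\alpha}{n-4}$; this non-degeneracy makes a Hamiltonian/Lyapunov functional for the ODE strictly monotone except on equilibria, and combined with translation invariance a phase-space analysis shows that the only bounded nonnegative solutions on $\mathbb{R}$ are the constants $\psi\equiv0$ and $\psi\equiv c_{0}$. Hence every blow-up limit $u_{*}$ is either $0$ or $c_{0}|y|^{-\gamma}$.

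Now put $\Phi(\lambda):=\max_{|y|=1}u_{\lambda}(y)=\lambda^{\gamma}\max_{|x|=\lambda}u$ and $\varphi(\lambda):=\min_{|y|=1}u_{\lambda}(y)$, continuous on $(0,\tfrac14)$. By the previous step every subsequential limit of $\Phi(\lambda)$ or of $\varphi(\lambda)$ as $\lambda\to0$ lies in $\{0,c_{0}\}$; since the cluster set at $0$ of a continuous real function is connected, each of $\Phi,\varphi$ tends to $0$ or to $c_{0}$. If $\Phi(\lambda)\to c_{0}$ while $\varphi(\lambda)\to0$, then $\Phi(\lambda)-\varphi(\lambda)\to c_{0}>0$, and extracting a $C^{0}(\mathbb{S}^{n-1})$-convergent subsequence $u_{\lambda_{j}}\to u_{*}$ yields $\max_{|y|=1}u_{*}-\min_{|y|=1}u_{*}=c_{0}>0$, contradicting $u_{*}\in\{0,\,c_{0}|\cdot|^{-\gamma}\}$. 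Therefore $\Phi$ and $\varphi$ share the same limit, i.e.\ either $|x|^{\gamma}u(x)\to0$ or $|x|^{\gamma}u(x)\to c_{0}$ uniformly as $|x|\to0$. In the first case $u=o(|x|^{-\gamma})$; reducing \eqref{eq1} to the integral equation $u=N[\,|x|^{\alpha}u^{p}\,]+h$ with $N$ the Newtonian potential of $(-\Delta)^{2}$, the polyharmonic remainder $h$ must be smooth (because $\gamma<n-4$, so every singular biharmonic mode is more singular than $|x|^{-\gamma}$), and then, using the Stein--Weiss inequality of Proposition~\ref{pro5} to see that $u\mapsto N[\,|x|^{\alpha}u^{p}\,]$ is bounded and, for $|x|$ small, contractive on suitable weighted Lebesgue spaces, the regularity-lifting Proposition~\ref{pro7} gives $u\in L^{\infty}$ near $0$; an elliptic bootstrap then shows $x=0$ is a removable singularity. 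In the second case $\lim_{|x|\to0}|x|^{\frac{4+\alpha}{p-1}}u(x)=c_{0}=A_{0}^{1/(p-1)}$, which is the assertion.

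\emph{Main obstacle.} The heart of the matter is the ODE classification: ruling out non-constant bounded solutions (periodic and heteroclinic orbits) of the fourth-order autonomous equation $\bar{\mathcal{L}}_{\gamma+2}\bar{\mathcal{L}}_{\gamma}\psi=\psi^{p}$. This is exactly where the hypothesis $p\ne\frac{n+4+2\alpha}{n-4}$ enters, since it is the degenerate case $\gamma=\frac{n-4}{2}$ in which the $\psi'''$-term disappears and the equation becomes Hamiltonian, so that bounded non-constant orbits genuinely occur; away from it one must exhibit the correct monotone functional and exclude the limiting periodic regime. Secondary (but still delicate) points are the removability step---matching the weighted spaces so that Propositions~\ref{pro5} and \ref{pro7} apply with contraction constant below $1$---and checking that the blow-up limit $u_{*}$ meets the hypotheses of Proposition~\ref{pro2}.
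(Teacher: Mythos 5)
Your proposal does not address the statement you were asked to prove. The statement is Proposition~\ref{pro7}, the Chen--Li regularity lifting lemma: a purely abstract fixed-point result asserting that if $T$ is a contraction on $X$ and on $Y$, $f\in X$, $g\in Z:=X\cap Y$, and $f=Tf+g$, then $f\in Z$. What you have written instead is a sketch of the proof of Theorem~\ref{mth1} (the precise asymptotic behavior of fourth-order singular solutions), in the course of which you \emph{invoke} Proposition~\ref{pro7} as a black box (``the regularity-lifting Proposition~\ref{pro7} gives $u\in L^{\infty}$ near $0$''). Nothing in your argument proves the lifting lemma itself.

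For the record, the correct proof of Proposition~\ref{pro7} is short and has nothing to do with the PDE. Since $g\in Z$ and $T$ maps $X$ into $X$ and $Y$ into $Y$, the Picard iterates $f_{0}=g$, $f_{k+1}=Tf_{k}+g$ all lie in $Z$. Contractivity of $T$ on $X$ makes $f=Tf+g$ the unique fixed point in $X$ and forces $f_{k}\to f$ in $X$; contractivity on $Y$ makes $f_{k}$ converge in $Y$ to the unique fixed point $\tilde f\in Y$ of the same equation. Because $X$ and $Y$ are embedded in the common Hausdorff topological vector space $V$ (with the extended norms compatible with its topology), both $X$- and $Y$-limits of $(f_{k})$ agree in $V$, so $f=\tilde f\in Y$ and hence $f\in X\cap Y=Z$. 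You should redo the exercise with this target in mind.
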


\section{ Upper bound and super polyharmonic properties}\label{sec3}
To prove the upper bound in Theorem \ref{thm1.1}, we need the following estimate which is inspired by \cite{PQS2007,PS2012}.
\begin{lem}\label{lem1}
Let $u$ be a nonnegative solution of
\begin{equation}\label{eq2}
(-\Delta)^{m}u(x)=c(x)u^{p}(x), \quad x\in B_{1}\subset \mathbb{R}^{n},
\end{equation}
where $m\geq1$ is an integer, $n>2m$ and $1<p<\frac{n+2m}{n-2m}$. Assume that $c(x)\in C^{\gamma}(\overline{B_{1}})$ for some $\gamma\in(0,1]$ and satisfies
\begin{equation}\label{decay}
\| c \|_{C^{\gamma}(\overline{B_{1}})}\leq C_{1}, \quad c(x)\geq C_{2}, \quad \forall~x\in B_{1},
\end{equation}
for some constants $C_{1}$, $C_{2}>0$. Then there exists a constant $C=C(\gamma, C_{1}, C_{2}, p, n, m)>0$ such that
$$\sum_{i=0}^{2m-1}|\nabla^{i}u(x)|^{\frac{p-1}{2m+ip-i}}\leq C(1+dist^{-1}(x,\partial B_{1})), \quad \forall~x\in B_{1}.$$
\end{lem}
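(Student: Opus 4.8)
The plan is to run a rescaling (blow-up) argument of Gidas--Spruck type, in the form organized by Pol\'a\v{c}ik--Quittner--Souplet: the doubling lemma (Proposition~\ref{pro6}) will reduce the estimate to the Liouville theorem of Wei--Xu (Proposition~\ref{pro1}). The constant $C$ comes out of a proof by contradiction, so it is not exhibited explicitly; its dependence on exactly $(\gamma,C_1,C_2,p,n,m)$ is forced because those are the only data appearing in \eqref{eq2}--\eqref{decay}. I would first introduce the blow-up quantity
$$
M(x):=\sum_{i=0}^{2m-1}|\nabla^{i}u(x)|^{\frac{p-1}{2m+i(p-1)}},
$$
which is continuous on $B_1$ since $u\in C^{2m}$, and observe that the exponents are tuned so that $M$ behaves like an inverse length for the natural dilation of \eqref{eq2}: if $\mu>0$, $x_0\in B_1$ and $v(y):=\mu^{\frac{2m}{p-1}}u(x_0+\mu y)$, then $v$ solves $(-\Delta)^m v=c(x_0+\mu\,\cdot)\,v^p$ on the corresponding ball and the algebraic identity $\big(\tfrac{2m}{p-1}+i\big)(p-1)=2m+i(p-1)$ yields $M_v(y)=\mu\,M_u(x_0+\mu y)$. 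Thus the claimed inequality $M(x)\le C\,(1+\mathrm{dist}^{-1}(x,\partial B_1))$ is precisely the scale-invariant form of the estimate; it is trivial when $u\equiv0$, so I would assume $u\not\equiv0$.

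Next I would argue by contradiction. If the inequality fails, then for every $k\in\mathbb{N}$ there are $u_k$ solving \eqref{eq2}--\eqref{decay} and $y_k\in B_1$ with $M_k(y_k)>2k\,(1+\mathrm{dist}^{-1}(y_k,\partial B_1))$, hence $M_k(y_k)\,\mathrm{dist}(y_k,\partial B_1)>2k$ and $M_k(y_k)>2k$. Since for $m\ge2$ there is no maximum principle forcing $u_k>0$, the function $M_k$ may vanish inside $B_1$; to sidestep this I would apply Proposition~\ref{pro6} not to $M_k$ but to $\widetilde M_k:=\max(M_k,1)$, with $D=B_1$ and $\Gamma=\partial B_1$ (legitimate since $\widetilde M_k(y_k)=M_k(y_k)$). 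This produces $x_k\in B_1$ with $\widetilde M_k(x_k)\ge\widetilde M_k(y_k)>2k$ (so $\widetilde M_k(x_k)=M_k(x_k)$), with $\mu_k:=M_k(x_k)^{-1}\to0$, with $\mathrm{dist}(x_k,\partial B_1)>2k\mu_k$ (so $B_{k\mu_k}(x_k)\subset B_1$), and with $M_k\le\widetilde M_k\le2\widetilde M_k(x_k)=2M_k(x_k)$ on $B_{k\mu_k}(x_k)$. I would then rescale by $v_k(y):=\mu_k^{\frac{2m}{p-1}}u_k(x_k+\mu_k y)$ for $y\in B_k(0)$: by the scaling computations $v_k\ge0$ solves $(-\Delta)^m v_k=\tilde c_k v_k^p$ on $B_k(0)$ with $\tilde c_k(y):=c_k(x_k+\mu_k y)$, while $M_{v_k}(0)=\mu_kM_k(x_k)=1$ and $M_{v_k}\le2$ on $B_k(0)$; in particular $|\nabla^i v_k|$ is bounded by a dimensional constant for $i\le2m-1$ on $B_k(0)$.

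Then I would pass to the limit. From \eqref{decay}, $\|\tilde c_k\|_{L^\infty}\le C_1$ and $[\tilde c_k]_{C^\gamma(\overline{B_R})}\le\mu_k^\gamma[c_k]_{C^\gamma(\overline{B_1})}\to0$ for each fixed $R$, while $\tilde c_k(0)=c_k(x_k)\in[C_2,C_1]$; so along a subsequence $\tilde c_k\to c_\infty$ locally uniformly, with $c_\infty\in[C_2,C_1]$ a positive constant. Since $v_k$ is bounded in $C^{2m-1}_{loc}$ and $t\mapsto t^p$ is locally Lipschitz on $[0,\infty)$ (as $p>1$), the right side $\tilde c_k v_k^p$ is bounded in $C^{0,\gamma}_{loc}$, and interior Schauder estimates for $(-\Delta)^m$ then give $v_k$ bounded in $C^{2m,\gamma}_{loc}$; hence a further subsequence converges in $C^{2m}_{loc}(\mathbb{R}^n)$ to some $v_\infty\ge0$ that is a classical solution of $(-\Delta)^m v_\infty=c_\infty v_\infty^p$ on all of $\mathbb{R}^n$, with $M_{v_\infty}(0)=\lim_kM_{v_k}(0)=1$, so $v_\infty\not\equiv0$. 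Finally $w:=c_\infty^{1/(p-1)}v_\infty$ is a nonnegative classical solution of $(-\Delta)^m w=w^p$ in $\mathbb{R}^n$ with $1<p<\frac{n+2m}{n-2m}$, so Proposition~\ref{pro1} forces $w\equiv0$, i.e.\ $v_\infty\equiv0$, contradicting $M_{v_\infty}(0)=1$.

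I expect the main obstacle to be the blow-up construction itself: one must produce a limit solution that is \emph{simultaneously} entire and nontrivial, whereas a naive rescaling centered at a near-maximum of $M$ would only place the limit on a half-space or a ball and might let it degenerate to $0$. The doubling lemma is exactly the device that forces the rescaled domains $B_k(0)$ to exhaust $\mathbb{R}^n$ while keeping the normalization $M_{v_k}(0)=1,\ M_{v_k}\le2$ stable in the limit. Two further technical points deserve care: handling the possible interior zeros of $M_k$ when $m\ge2$ (done above by truncating to $\widetilde M_k=\max(M_k,1)$, for which only an upper bound on $M_k$ near $x_k$ is needed), and running the elliptic bootstrap with the mere local Lipschitz regularity of $t\mapsto t^p$ rather than with smoothness.
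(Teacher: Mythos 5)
Your proof is correct and takes essentially the same route as the paper's: the blow-up argument organized by the doubling lemma (Proposition~\ref{pro6}) reducing to the Wei--Xu Liouville theorem (Proposition~\ref{pro1}). The one refinement you add—applying the doubling lemma to $\widetilde M_k=\max(M_k,1)$ rather than to $M_k$, to cover possible interior zeros of $M_k$ when $m\ge 2$—is a legitimate technical point that the paper glosses over (as is your explicit normalization $w=c_\infty^{1/(p-1)}v_\infty$ to match the hypotheses of Proposition~\ref{pro1}), but neither changes the essential argument.
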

\begin{proof}
Suppose by contradiction that there exist sequences ${\{u_k}\}$, ${\{c_k}\}$ verifying \eqref{eq2}, \eqref{decay} and a sequence of points ${\{y_k}\}$ such that the functions $M_{k}(y):=\sum\limits_{i=0}^{2m-1}|\nabla^{i}u_{k}(y)|^{\frac{p-1}{2m+ip-i}}$ satisfy
$$M_{k}(y_{k})>2k(1+dist^{-1}(y_{k},\partial B_{1})).$$
Then by the doubling lemma (see Proposition \ref{pro6}), there exists another sequence ${\{x_k}\}$ such that
$$M_{k}(x_{k})\geq M_{k}(y_{k}), \quad M_{k}(x_{k})>2k(1+dist^{-1}(x_{k},\partial B_{1})),$$
and
$$M_{k}(z)\leq2M_{k}(x_{k}) \quad \mbox{for}~  |z-x_{k}|\leq kM^{-1}_{k}(x_{k}).$$
Let $\lambda_{k}:=M^{-1}_{k}(x_{k})$. Then
$$\lambda_{k}\rightarrow0 \quad\mbox{as}~  k\rightarrow\infty $$
due to $M_{k}(x_{k})\geq M_{k}(y_{k})>2k$.

Set
$$v_{k}(y)=\lambda_{k}^{\frac{2m}{p-1}}u(x_{k}+\lambda_{k}y) \quad \mbox{and} \quad \widetilde{c}_{k}(y)=c_{k}(x_{k}+\lambda_{k}y). $$
Then we have
$$\sum\limits_{i=0}^{2m-1}|\nabla^{i}v_{k}(0)|^{\frac{p-1}{2m+ip-i}}=1$$
and
$$ \quad \sum\limits_{i=0}^{2m-1}|\nabla^{i}v_{k}(y)|^{\frac{p-1}{2m+ip-i}}\leq2 \quad\mbox{for}~  |y|\leq k.$$
Moreover,  $v_k$ satisfies
\begin{align*}
(-\Delta)^m v_k(y)=\widetilde{c}_k(y) v^p_k(y), \quad |y|\leq k.
\end{align*}
By \eqref{decay}, we get $C_{2}\leq \widetilde{c}_{k}(y)\leq C_{1}$ and
\begin{equation}\label{c}
|\widetilde{c}_{k}(y)-\widetilde{c}_{k}(z)|\leq C|\lambda_{k}(y-z)|^{\gamma}\leq C|y-z|^{\gamma} \quad\mbox{for $k$ large enough}.
\end{equation}
Using Arzel\`a-Ascoli theorem and combining with \eqref{decay} and \eqref{c}, there exists a constant $c_{0} \geq C_2>0$ such that $\widetilde{c}_{k}\to c_0$ as $k\to +\infty$. It follows from the standard elliptic estimates that, up to a subsequence, $v_{k}\rightarrow v$ in $C_{loc}^{2m}(\mathbb{R}^{n})$ and $v\geq 0$ is a classical solution of
$$(-\Delta)^{m}v=c_{0}v^{p}  \quad\mbox{in}~\mathbb{R}^{n}$$
with $\sum_{i=0}^{2m-1}|\nabla^{i}v(0)|^{\frac{p-1}{2m+ip-i}}=1$. As $1<p<\frac{n+2m}{n-2m}$, this contradicts the Liouville type result in  Proposition \ref{pro1}.
\end{proof}

\medskip

\noindent\textbf{Proof of Theorem \ref{thm1.1}.} ($i$). Assume $0<|x_{0}|<\frac{1}{2}$ and denote $R=\frac{|x_{0}|}{2}$. Then we have that for any $y\in B_{1}$, there holds
$$
\frac{|x_{0}|}{2}<|x_{0}+Ry|<\frac{3|x_{0}|}{2}.
$$
Hence $x_0+Ry\in B_1$. Let us define the function
$$v(y)=R^{\frac{2m+\alpha}{p-1}}u(x_{0}+Ry), \quad  y \in B_1.$$
Then $v$ satisfies
$$(-\Delta)^{m}v(y)=c(y)v^{p}(y),$$
where $c(y)=|y+\frac{x_{0}}{R}|^{\alpha}$. For any $y\in B_{1}$, it is easy to see that
$$1\leq|y+\frac{x_{0}}{R}|\leq3.$$
This implies that for any $y \in B_1$,
$$1\leq c(y)\leq 3^{\alpha} \quad\mbox{and}\quad|\nabla c(y)|\leq C(\alpha),$$
for some constant $C(\alpha)$. It follows from Lemma \ref{lem1} that
$$\sum_{i=0}^{2m-1}|\nabla^{i}v(0)|^{\frac{p-1}{2m+ip-i}}\leq C.$$
Since $x_0 \in B_{1/2} \setminus \{0\}$ is arbitrary, we obtain that
$$\sum_{i=0}^{2m-1}|x|^{\frac{2m+\alpha}{p-1}+i}|\nabla^i u(x)|\leq C, \quad \forall x \in B_{1/2} \setminus \{0\}.$$
This completes the proof of the first part of Theorem \ref{thm1.1}.

($ii$). With the help of Lemma \ref{lem1}, we can follow the above procedure step by step to prove the second part of Theorem \ref{thm1.1}, the detail will be omitted.
\qed

\medskip

Next we will show the super polyharmonic properties in Theorem \ref{thm1.2} based on an integral representation of solutions, which is inspired by the work of Jin-Xiong \cite{JX2021}. Recall that the Green function of $-\Delta$ on the unit ball is given by
$$
G_1(x,y)=\frac{1}{(n-2)\omega_{n-1}}\left(|x-y|^{2-n}-\left|\frac{x}{|x|}-|x|y\right|^{2-n}\right),
$$
where $\omega_{n-1}$ is the surface area of the unit sphere in $\mathbb{R}^n$. Hence, for any $u\in C^2(B_1)\cap C(\overline{B_1})$,
$$
u(x)=\int_{B_1} G_1(x,y)(-\Delta) u(y) dy + \int_{\partial B_1} H_1(x,y) u(y) d\sigma,
$$
where
$$
H_1(x,y)=-\frac{\partial}{\partial \nu_y}G_1(x,y)=\frac{1-|x|^2}{\omega_{n-1} |x-y|^n} \quad \text{for}~ x\in B_1,~y\in \partial B_1.
$$
By induction, we have for $n>2m$ and $u\in C^{2m}(B_1)\cap C^{2m-2}(\overline{B_1})$,
$$
u(x)=\int_{B_1} G_m(x,y)(-\Delta)^m u(y) dy + \sum_{i=1}^m\int_{\partial B_1} H_i(x,y) (-\Delta)^{i-1}u(y) d\sigma,
$$
where
$$
G_m(x,y)=\int_{B_1\times \cdots \times B_1} G_1(x,y_1) G_1(y_1,y_2) \dots G_1(y_{m-1}, y) dy_1 \dots dy_{m-1}
$$
and
$$
H_i(x,y)=\int_{B_1\times \cdots \times B_1} G_1(x,y_1) G_1(y_1,y_2) \dots G_1(y_{i-2}, y_{i-1})H_1(y_{i-1}, y) dy_1 \dots dy_{i-1}.
$$
for $2\leq i\leq m$.

The following result is very similar to \cite[Lemma 2.1]{JX2021}, so the proof is omitted here.
\begin{lem}\label{Lem3.2}
Let $u \in C^{2m}(\overline{B_1}\setminus{\{0}\})$ be a nonnegative solution of
$$
(-\Delta)^m u =|x|^\alpha u^p  \quad \text{in}~B_1\setminus{\{0}\},
$$
where $p>\frac{n+\alpha}{n-2m}$ with $\alpha>-2m$. Then $|x|^\alpha u^p \in L^1(B_1)$ and
$$
u(x)=\int_{B_1} G_m(x,y)|y|^\alpha u^p(y) dy +\sum_{i=1}^m\int_{\partial B_1} H_i(x,y) (-\Delta)^{i-1}u(y) d\sigma \quad \mbox{for} ~ x \in B_1\setminus{\{0}\}.
$$
\end{lem}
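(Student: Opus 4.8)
The plan is to first establish the integrability $|x|^{\alpha}u^{p}\in L^{1}(B_{1})$, and then to identify $u$ with the right-hand side of the asserted formula via a removable-singularity argument; the crucial input is the a priori bound of Theorem \ref{thm1.1}(i), which holds for \emph{every} nonnegative solution and requires no sign assumption. For the integrability: on each annulus $\overline{B_{1}}\setminus B_{\delta}$ the function $u$ is continuous up to the boundary, hence bounded, and so is $|x|^{\alpha}$, so $|x|^{\alpha}u^{p}\in L^{1}(\overline{B_{1}}\setminus B_{\delta})$. Near the origin, Theorem \ref{thm1.1}(i) gives $u(x)\le C|x|^{-\frac{2m+\alpha}{p-1}}$ for $0<|x|<\tfrac12$, whence
\[
|x|^{\alpha}u^{p}(x)\le C|x|^{-\beta},\qquad \beta:=\frac{2mp+\alpha}{p-1}.
\]
A direct computation shows $\beta<n\Leftrightarrow p>\frac{n+\alpha}{n-2m}$ and $\beta>2m\Leftrightarrow \alpha>-2m$; the former makes $|x|^{-\beta}$ locally integrable near $0$, so $|x|^{\alpha}u^{p}\in L^{1}(B_{1})$.

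Next, write $f:=|x|^{\alpha}u^{p}$ and let $w$ denote the right-hand side of the claimed identity. Since $f\in L^{1}(B_{1})$, $w(x)$ is finite for every $x\in B_{1}\setminus\{0\}$ (the kernel $G_{m}(x,\cdot)$ is bounded away from $x$ and is $\lesssim|x-\cdot|^{2m-n}$, hence locally integrable, near $x$, while the boundary integrands are continuous). Because $G_{m}$ and $H_{i}$ are the iterated compositions of $G_{1}$ and $H_{1}$ displayed above, peeling off one Laplacian at a time — and using interior elliptic regularity, $f$ being locally H\"older continuous in $B_{1}\setminus\{0\}$ since $p>1$ — yields $(-\Delta)^{m}w=f=(-\Delta)^{m}u$ classically in $B_{1}\setminus\{0\}$ and $(-\Delta)^{k}w=(-\Delta)^{k}u$ on $\partial B_{1}$ for $0\le k\le m-1$. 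Thus $h:=u-w$ is polyharmonic in $B_{1}\setminus\{0\}$ with vanishing Navier data on $\partial B_{1}$. To bound $h$ near $0$, I would combine $0\le G_{1}(x,y)\le c_{n}|x-y|^{2-n}$ with the convolution identity $|\cdot|^{2-n}\ast\cdots\ast|\cdot|^{2-n}=c|\cdot|^{2m-n}$ ($m$ factors, valid as $n>2m$) to get $0\le G_{m}(x,y)\le C|x-y|^{2m-n}$, and then use the elementary kernel estimate
\[
\int_{B_{1}}|x-y|^{2m-n}|y|^{-\beta}\,dy\le C|x|^{2m-\beta}\qquad (|x|\text{ small}),
\]
proved by splitting into $|y|<2|x|$ and $|y|>2|x|$ and using $2m<\beta<n$. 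Since the boundary part of $w$ is smooth near $0$, this gives $|w(x)|\le C|x|^{2m-\beta}=C|x|^{-\frac{2m+\alpha}{p-1}}$, and with the matching bound on $u$ we conclude $|h(x)|\le C|x|^{-\frac{2m+\alpha}{p-1}}$ near $0$. Crucially $\tfrac{2m+\alpha}{p-1}<n-2m$, once more exactly the hypothesis $p>\frac{n+\alpha}{n-2m}$, so $h(x)=o(|x|^{2m-n})$ as $x\to0$.

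It remains to show $h\equiv0$. Since $h\in L^{1}_{loc}(B_{1})$, the distribution $(-\Delta)^{m}h$ on $B_{1}$ is supported at $\{0\}$, hence equals a finite combination $\sum_{\gamma}c_{\gamma}D^{\gamma}\delta_{0}$. Subtracting $\sum_{\gamma}c_{\gamma}D^{\gamma}E$, where $E(x)=c_{n,m}|x|^{2m-n}$ is the fundamental solution of $(-\Delta)^{m}$, produces a function that is polyharmonic, hence $C^{\infty}$, on all of $B_{1}$; but a nonzero $D^{\gamma}E$ is homogeneous of degree $2m-n-|\gamma|\le2m-n$, which grows strictly faster than the $o(|x|^{2m-n})$ allowed for $h$, so comparing leading homogeneous parts forces all $c_{\gamma}=0$. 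Hence $(-\Delta)^{m}h=0$ in $B_{1}$ and $h\in C^{\infty}(B_{1})$. Finally, $v:=(-\Delta)^{m-1}h$ is harmonic in $B_{1}$ with $v|_{\partial B_{1}}=0$, so $v\equiv0$ by the maximum principle; descending through $(-\Delta)^{m-2}h,\dots,(-\Delta)h,h$ in the same way gives $h\equiv0$, i.e. $u=w$ in $B_{1}\setminus\{0\}$, which is the assertion.

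The main obstacle is this last block: one must produce the sharp growth bound $|w(x)|\lesssim|x|^{-\frac{2m+\alpha}{p-1}}$ for the Green potential that precisely matches the a priori bound for $u$, and then run the polyharmonic removable-singularity argument, keeping careful track of the three inequalities $\beta<n$, $\beta>2m$, $\tfrac{2m+\alpha}{p-1}<n-2m$ — all of which reduce to the standing hypotheses $\alpha>-2m$ and $p>\frac{n+\alpha}{n-2m}$. The fact that Theorem \ref{thm1.1}(i) needs no sign assumption is precisely what allows the scheme to work for an arbitrary nonnegative solution.
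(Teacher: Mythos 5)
Your overall scheme is sound and the first two-thirds are correct: the integrability of $|x|^\alpha u^p$ follows from Theorem~\ref{thm1.1}(i) exactly as you say, the bound $G_m(x,y)\lesssim|x-y|^{2m-n}$ and the kernel estimate give the matching decay $|w(x)|\lesssim|x|^{-\frac{2m+\alpha}{p-1}}$, and the reduction to showing that $h=u-w$ (polyharmonic in $B_1\setminus\{0\}$, vanishing Navier data, $h=o(|x|^{2m-n})$) must vanish is the right thing to do. Note that the paper itself gives no proof of this lemma but simply refers to Lemma 2.1 of Jin--Xiong \cite{JX2021}, where the representation is obtained by writing the Green formula on the annulus $B_1\setminus B_\varepsilon$ and sending $\varepsilon\to0$, using the same a priori estimates to kill the inner boundary terms; your route via a polyharmonic removable-singularity theorem is a genuine alternative, and a reasonable one.

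There is, however, a real error in the last block. You assert that, because each nonzero $D^\gamma E$ is homogeneous of degree $\le 2m-n$ while $h=o(|x|^{2m-n})$, ``comparing leading homogeneous parts forces all $c_\gamma=0$.'' That inference is false: the functions $D^\gamma E$ are \emph{not} linearly independent on $\mathbb{R}^n\setminus\{0\}$. Indeed $(-\Delta)^m E\equiv 0$ there even though $(-\Delta)^m E=\delta_0$ as a distribution, so there are nontrivial combinations $\sum_\gamma c_\gamma D^\gamma E$ that vanish identically as functions while $\sum_\gamma c_\gamma D^\gamma\delta_0\neq 0$; the homogeneous-parts comparison only yields that each piece $\sum_{|\gamma|=k}c_\gamma D^\gamma E$ vanishes \emph{as a function}, which is strictly weaker than $c_\gamma=0$. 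The repair is cheap, and it is worth recording because it sidesteps the linear-independence question entirely: once you know that the function $\Phi:=\sum_\gamma c_\gamma D^\gamma E$ vanishes pointwise on $B_1\setminus\{0\}$, recall that as a distribution $\Phi=T*E=h-g$ with $g$ smooth, so $\Phi\in L^1_{loc}(B_1)$; a locally integrable distribution that vanishes a.e.\ is the zero distribution, whence $T=(-\Delta)^m\Phi=0$ on $B_1$ directly, and your final maximum-principle descent then gives $h\equiv 0$. (Equivalently, one can argue by testing $(-\Delta)^m h$ against $\psi\eta_\varepsilon$ with a cutoff $\eta_\varepsilon$ at scale $\varepsilon$ and using interior derivative estimates $|\nabla^j h|=o(|x|^{2m-n-j})$ to show all cutoff and boundary terms are $o(1)$ as $\varepsilon\to 0$.) With this fix your argument is complete.
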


\noindent{\bf Proof of Theorem \ref{thm1.2}}. Without loss of generality, we may suppose that $u\in C^{2m}(\overline{B_1 }\setminus \{0\}) $. By Lemma \ref{Lem3.2}, for $k=1,...,m-1$ and $x\in B_1\setminus{\{0}\}$, we have
\begin{align*}
(-\Delta)^k u(x)=&\int_{B_1} G_{m-k}(x,y)|y|^\alpha u^p(y) dy + \sum_{i=k+1}^m \int_{\partial B_1} H_{i-k} (x,y)(-\Delta)^{i-1}u(y) d\sigma\\
=&C\int_{B_1} \frac{|y|^\alpha u^p(y)}{|x-y|^{n-2(m-k)}} dy   + h(x),
\end{align*}
 where $h(x)$ is a smooth function in $B_{1/2}$ and $C=C(n, m)>0$.

{\it Claim}: there holds $$\int_{B_{1/2}}\frac{|y|^\alpha u^p(y)}{|y|^{n+2-2m}} dy =+\infty.$$
 If the claim is true, then there exists $\tau_1\in(0, 1/8)$ small such that
$$
C\int_{B_{1/2}\setminus B_{\tau_1}} \frac{|y|^\alpha u^p(y)}{|y|^{n+2-2m}} dy \geq \| h \|_{L^\infty(B_{1/4})} +1.
$$
Setting $\tau=\frac{\tau_1}{2} >0$, then for $x\in B_\tau \setminus{\{0}\}$ we have
\begin{align*}
(-\Delta)^k u(x)= & C\int_{B_1}\frac{|y|^\alpha u^p(y)}{|x-y|^{n-2(m-k)}} dy + h(x)\\
\geq & C \int_{B_1\setminus B_{\tau_1}}\frac{|y|^\alpha u^p(y)}{|x-y|^{n-2(m-k)}} dy + h(x)\\
\geq & C \int_{B_1\setminus B_{\tau_1}}\frac{|y|^\alpha u^p(y)}{|y|^{n-2(m-k)}} dy + h(x)\\
\geq & C\int_{B_1\setminus B_{\tau_1}}\frac{|y|^\alpha u^p(y)}{|y|^{n-2m+2}} dy + h(x) \geq 1.
\end{align*}
Hence, $(-\Delta)^k u$ is positive in the small punctured ball $B_\tau \setminus \{0\}$.

Next, we show that the claim holds. If not, then this yields
$$
\int_{B_{1/2}} \frac{|y|^\alpha u^p(y)}{|y|^{n+2-2m}} dy < \infty.
$$
By Theorem \ref{thm1.1}, we have $u(x)\leq C |x|^{-\frac{2m+\alpha}{p-1}}$ in $B_{1/2}$. Meanwhile, as $p>\frac{n+\alpha}{n-2m}$ and $-2m<\alpha<2m$, we have $\frac{2m+\alpha}{p-1}<n-2m$. This implies that there exists $q_0 > \frac{n}{n-2m}$ such that $\frac{2m+\alpha}{p -1} q_0 < n$. Therefore, we get
\begin{align*}
\int_{B_{1/2}} u^{q_0} (y) dy \leq C \int_{B_{1/2}} |y|^{-\frac{2m+\alpha}{p-1}q_0} dy<\infty.
\end{align*}
That is, $u\in L^q(B_{1/2})$ for some $q>\frac{n}{n-2m}$.

On the other hand, we want to show $|\cdot|^\alpha u^{p-1}\in L^{\frac{n}{2m}}(B_{1/2})$. First we consider the case $\frac{n+\alpha_+}{n-2m}\leq p<\frac{n+2m}{n-2m}$ with $\alpha_+:= \max{\{0,~\alpha}\}$, which gives $(p-1)\frac{n}{2m}-p \geq 0$. By Theorem \ref{thm1.1}, we obtain
\begin{align*}
\int_{B_{1/2}} \left[|y|^\alpha u^{p-1}(y)\right]^{\frac{n}{2m}} dy
=&\int_{B_{1/2}} |y|^\alpha u^p \left[|y|^{(\frac{n}{2m}-1)\alpha}u^{(p-1)\frac{n}{2m}-p}\right] dy\\
\leq &\int_{B_{1/2}} |y|^\alpha u^p \left[|y|^{(\frac{n}{2m}-1)\alpha} |y|^{-\frac{2m+\alpha}{p-1}[(p-1)\frac{n}{2m}-p]}\right]dy.
\end{align*}
Moreover, note that
\begin{align*}
\frac{2m+\alpha}{p-1}\Big[ (p-1)\frac{n}{2m}-p \Big]-(\frac{n}{2m}-1)\alpha
=&~~\frac{(2m+\alpha)n}{2m} - \frac{p(2m+\alpha)}{p-1}-\frac{\alpha(n-2m)}{2m}\\
=& ~~n-2m - \frac{2m+\alpha}{p-1}\\
<&~~ n+2-2m.
\end{align*}
Hence,
$$
\int_{B_{1/2}} [|y|^\alpha u^{p-1}(y)]^{\frac{n}{2m}} dy \leq \int_{B_{1/2}} \frac{|y|^\alpha u^p(y)}{|y|^{n+2-2m}}dy < \infty.
$$
Thus $|\cdot|^\alpha u^{p-1}\in L^{\frac{n}{2m}}(B_{1/2})$.

Now we consider $\frac{n+\alpha}{n-2m}<p<\frac{n}{n-2m}$ with $-2m<\alpha<0$. In this case, we have $(p-1)\frac{n}{2m}<p$. Denote $s=\frac{p}{(p-1)\frac{n}{2m}}>1$. By H\"older inequality,
\begin{align*}
\int_{B_{1/2}} [|y|^\alpha u^{p-1}(y)]^{\frac{n}{2m}}dy ~=& ~~\int_{B_{1/2}}|y|^{\alpha\frac{n}{2m}+\frac{n+2-2m-\alpha}{s}} \cdot \frac{|y|^{\frac{\alpha}{s}}u^{(p-1)\frac{n}{2m}}(y)}{|y|^{\frac{n+2-2m}{s}}} dy\\
\leq & ~~\left(\int_{B_{1/2}} |y|^{(\alpha\frac{n}{2m}+\frac{n-2m+2-\alpha}{s})s'}dy\right)^{\frac{1}{s'}}\times ~ \left(\int_{B_{1/2}}\frac{|y|^\alpha u^p(y)}{|y|^{n-2m+2}} dy\right)^{\frac{1}{s}},
\end{align*}
where $\frac{1}{s'}+\frac{1}{s}=1$. We only need to show
\begin{align}\label{7}
\left(\alpha\frac{n}{2m}+\frac{n-2m+2-\alpha}{s}\right)s'>-n.
\end{align}
Since
\begin{align*}
\left(\alpha\frac{n}{2m}+\frac{n-2m+2-\alpha}{s}\right)s'
=& ~\frac{n}{2m}\frac{(p-1)(n+2-2m)+\alpha}{p}\cdot \frac{s}{s-1}\\
=& ~\frac{n}{2m} \frac{(p-1)(n+2-2m)+\alpha}{p-(p-1)\frac{n}{2m}},
\end{align*}
the inequality \eqref{7} holds if and only if
\begin{align*}
&\frac{n}{2m} \frac{(p-1)(n+2-2m)+\alpha}{p-(p-1)\frac{n}{2m}}>-n\\
\Longleftrightarrow \quad & ~~(p-1)(n+2-2m)+\alpha > -2m \Big[ p-(p-1)\frac{n}{2m} \Big]\\
\Longleftrightarrow \quad & ~~2p-2+2m +\alpha>0,
\end{align*}
which is true due to $p>1$ and $\alpha>-2m$. Hence, for $\frac{n+\alpha}{n-2m}<p<\frac{n+2m}{n-2m}$ and $-2m<\alpha<2m$, we obtain that $|\cdot|^\alpha u^{p-1} \in L^{\frac{n}{2m}}(B_{1/2})$.

It follows from the regularity result, Corollary 1.1 in Li \cite{LYY}, that $u\in L^\infty(B_{1/2})$ and thus $x=0$ is removable. This contradicts the assumption. The proof of Theorem \ref{thm1.2} is completed.
\qed

\section{Classification and asymptotic behavior}\label{sec4}
In this section, we shall prove Theorems \ref{thm1.3} and \ref{mth1}. The upper bound has been proved in Theorem \ref{thm1.1}. To show the lower bound in Theorem \ref{thm1.3}, an essential tool is a Harnack inequality for singular positive solutions. Such Harnack inequality is a consequence of the super polyharmonic properties in Theorem \ref{thm1.2} and the upper bound in Theorem \ref{thm1.1}. First we recall the following  Harnack inequality for regular solutions of an integral equation in Jin-Li-Xiong \cite{JLX2017}.

\begin{pro}[\cite{JLX2017}]\label{HarR}
For $n\geq1$, $0<\sigma<\frac{n}{2}$, $r>\frac{n}{n-2\sigma}$ and $p>\frac{n}{2\sigma}$, let $0\leq V\in L^{p}(B_{3})$, $0\leq h\in C^{0}(B_{3})$ and $0\leq u\in L^{r}(B_{3})$ satisfy
$$u(x)=\int_{B_{3}}\frac{V(y)u(y)}{|x-y|^{n-2\sigma}}dy+h(x),  \quad x \in B_{2}.$$
If there exists a constant $C_{0}\geq1$ such that $\max_{B_{1}}h\leq C_{0}\min_{B_{1}}h$, then
$$\max_{B_{1}}u\leq C\min_{B_{1}}u, $$
where $C>0$ depends only on $n$, $\sigma$, $C_{0}$, $p$ and an upper bound of $\|V\|_{L^{p}(B_{3})}$.
\end{pro}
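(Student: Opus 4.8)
\noindent\textbf{Proof strategy for Proposition \ref{HarR}.}
This statement is quoted from the literature, so I only indicate how one would prove it: run Moser's two--step scheme — local boundedness plus a weak Harnack estimate — directly on the integral equation, the only real novelty being the nonlocal Riesz kernel $|x-y|^{2\sigma-n}$. Set $w(x):=\int_{B_3}|x-y|^{2\sigma-n}V(y)u(y)\,dy\ge 0$, so that $u=w+h$ on $B_2$ and, since $h\ge0$, $\int_{B_3}|x-y|^{2\sigma-n}V(y)u(y)\,dy\le u(x)$ on $B_2$. If $Vu\equiv0$ a.e.\ then $u=h$ on $B_2$ and the conclusion is immediate from $\max_{B_1}h\le C_0\min_{B_1}h$; otherwise $\inf_{B_1}u>0$ (a zero of $u$ in $B_1$ would, as $V,u\ge0$, force $Vu\equiv0$), and by linearity in $u$ we may normalise $\inf_{B_1}u=1$ and reduce the claim to $\sup_{B_1}u\le C$. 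Writing $w=w_{\mathrm{loc}}+w_{\mathrm{far}}$ with $w_{\mathrm{far}}(x)=\int_{B_3\setminus B_2}|x-y|^{2\sigma-n}Vu\,dy$, the kernel of $w_{\mathrm{far}}$ is comparable at any two points of $B_1$, so $g:=w_{\mathrm{far}}+h$ has bounded oscillation ratio on $B_1$; as $g\le u$ forces $\inf_{B_1}g\le1$, we get $\sup_{B_1}g\le C(n,\sigma,C_0)$. Thus it remains to bound the singular part $w_{\mathrm{loc}}$ on $B_1$, where $u=w_{\mathrm{loc}}+g$ on $B_2$ and $w_{\mathrm{loc}}(x)\ge\int_{B_2}|x-y|^{2\sigma-n}Vw_{\mathrm{loc}}\,dy$.

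First I would prove the local boundedness estimate: for every $t>0$,
$$
\sup_{B_1}u\le C\big(n,\sigma,p,t,\|V\|_{L^p(B_3)}\big)\Big(\fint_{B_{3/2}}u^{t}\,dx\Big)^{1/t}+C\inf_{B_1}u.
$$
The mechanism is an integrability gain: since $p>\frac{n}{2\sigma}$, H\"older's inequality and the Hardy--Littlewood--Sobolev inequality (Proposition \ref{pro5}) show that $f\mapsto\int_{B_\rho}|x-y|^{2\sigma-n}V(y)f(y)\,dy$ maps $L^{q}(B_\rho)$ into $L^{q'}(B_{\rho'})$ with $\frac1{q'}=\frac1q-\delta_0$ for a fixed $\delta_0=\frac{2\sigma}{n}-\frac1p>0$ and operator norm $\lesssim\|V\|_{L^p(B_3)}$. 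Iterating the relation $u=\int_{B_2}|x-y|^{2\sigma-n}Vu\,dy+g$ over a decreasing chain of balls from $B_{3/2}$ to $B_1$ then upgrades $L^{t}$ control of $u$ to $L^\infty$ control; at each stage the nonsingular far contributions are collected into a tail term which, because the Riesz kernel is comparable across $B_1$ and $\int_{B_3}|x-y|^{2\sigma-n}Vu\le u$ on $B_2$, is itself controlled by $\inf_{B_1}u$, producing the additive $C\inf_{B_1}u$ above. (One may phrase the final lifting step through Proposition \ref{pro7}.) No analogous bound holds for $u^{-1}$, so the reverse inequality needs a different input.

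For the weak Harnack estimate I would apply the Bombieri--Giusti iteration lemma, which yields $\sup_{B_1}u\le C\inf_{B_1}u$ from two ingredients: the above estimate for all small $t>0$, and a one-sided measure bound $\big|\{x\in B_{3/2}:\log u(x)>\lambda\}\big|\le C\lambda^{-1}$ for $\lambda>0$ with $C$ depending only on the data. The latter I would obtain from a $BMO$ bound for $\log u$: starting from $u(x)\ge\int_{B_3}|x-y|^{2\sigma-n}Vu\,dy$ on $B_2$ together with the normalisation $\inf_{B_1}u=1$ and $0\le g\le C$, one estimates $\fint_{Q}\big|\log u-(\log u)_{Q}\big|$ over cubes $Q\subset B_{3/2}$ by a constant depending only on $n,\sigma,p$ and $\|V\|_{L^p(B_3)}$, after which John--Nirenberg supplies the measure bound. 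Feeding these into the Bombieri--Giusti lemma gives $\big(\fint_{B_{3/2}}u^{t_0}\big)^{1/t_0}\le C\inf_{B_1}u$ for some small $t_0>0$; combined with the local boundedness estimate at $t=t_0$ this yields $\sup_{B_1}u\le C\inf_{B_1}u$ with $C=C(n,\sigma,C_0,p,\|V\|_{L^p(B_3)})$ — the hypothesis $u\in L^{r}(B_3)$, $r>\frac{n}{n-2\sigma}$, being used only to make $w$ finite and the first iteration step licit.

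The crux is the $BMO$ estimate for $\log u$: in the second order model it comes from testing $-\Delta u=Vu$ against $u^{-1}\varphi^{2}$ and integrating by parts, a step with no literal counterpart here that must be replaced by a direct manipulation of the Riesz kernel in the inequality $\int_{B_3}|x-y|^{2\sigma-n}Vu\le u$ on $B_2$. A secondary, pervasive difficulty is the bookkeeping needed to keep every constant in the Moser and Bombieri--Giusti iterations — in particular in the treatment of the nonlocal tail — dependent only on $n,\sigma,p,C_0$ and $\|V\|_{L^p(B_3)}$, and not on $u$ or $h$.
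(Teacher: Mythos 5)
This proposition is quoted from Jin--Li--Xiong \cite{JLX2017}; the paper you are reading never proves it, so there is no ``paper's own proof'' to compare against. What I can say is that the route you sketch --- Moser's two-step scheme, i.e.\ local boundedness plus a weak Harnack inequality obtained from a $BMO$ bound on $\log u$ fed into the Bombieri--Giusti lemma --- is a genuinely different strategy from the one used in \cite{JLX2017} and in the related nonlocal literature. There the argument is much more elementary: one first bootstraps $u$ into $L^\infty_{\mathrm{loc}}$ using H\"older and the Hardy--Littlewood--Sobolev inequality (the hypothesis $p>\frac{n}{2\sigma}$ gives a fixed integrability gain each step, and $r>\frac{n}{n-2\sigma}$ makes the first step legal), and then splits the Riesz potential at a small scale $\delta$: the ``near'' part has operator norm $\lesssim \|V\|_{L^p}\,\delta^{2\sigma-n/p}$ and is absorbed, while the ``far'' part is pointwise comparable across $B_1$ because the kernel is comparable, so it is controlled by $\inf_{B_1}u$. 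No weak Harnack inequality, no $\log u$, no Bombieri--Giusti are involved.

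The reason this matters is that the step you yourself flag as ``the crux'' --- the $BMO$ estimate for $\log u$ --- is precisely the part of your outline that does not go through as written, and you do not supply a replacement. For the second order PDE model this step comes from a Caccioppoli-type energy identity obtained by testing $-\Delta u=Vu$ against $u^{-1}\varphi^2$, a manipulation that uses the differential structure of the operator in an essential way. For the integral inequality $u(x)\ge\int_{B_3}|x-y|^{2\sigma-n}V(y)u(y)\,dy$ there is no analogue of this computation, and it is far from clear that $\log u$ has any useful mean-oscillation bound with constants depending only on $n,\sigma,p,\|V\|_{L^p}$. So the weak Harnack half of your scheme is currently an announcement rather than a proof. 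A secondary (lesser) gap is that the Moser chain from $B_{3/2}$ down to $B_1$ passes through intermediate annuli on which neither $h$ nor $w_{\mathrm{far}}$ is controlled by the data (the Harnack hypothesis on $h$ is only on $B_1$, and the kernel comparability for $w_{\mathrm{far}}$ is only uniform on $B_1$), so the bookkeeping you allude to still has to be done carefully; the direct near/far absorption argument of \cite{JLX2017} avoids this by keeping the far part anchored at a fixed point of $B_1$.
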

\begin{pro}\label{Lem4.1}
Let $n>2m$, $-2m < \alpha < 2m$ and $\frac{n+\alpha}{n-2m}<p<\frac{n+2m}{n-2m}$. Suppose that $u\in C^{2m}(B_1\setminus{\{0}\})$ is a positive solution to \eqref{eq1} with a non-removable singularity at the origin. Then there exist two constants $C>0$ and $r_0 \in (0, 1/16)$ such that for all $0<r<r_0$, we have
$$
\sup_{r/2 \leq |x|\leq 3r/2} u(x) \leq C \inf _{r/2 \leq |x|\leq 3r/2} u(x).
$$
\end{pro}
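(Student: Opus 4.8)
The plan is to blow up at the singularity and invoke the Harnack inequality for integral equations (Proposition \ref{HarR}); the super polyharmonic property of Theorem \ref{thm1.2} is exactly what produces an integral representation whose remainder term is nonnegative and Harnack-controlled. Concretely, I would fix $\tau>0$ as in Theorem \ref{thm1.2}, choose $r_0>0$ small depending only on $\tau$ and $n$, and for $0<r<r_0$ set $v_r(y)=r^{\frac{2m+\alpha}{p-1}}u(ry)$, so that $(-\Delta)^m v_r=|y|^\alpha v_r^p$ in $B_{1/r}\setminus\{0\}$. By Theorem \ref{thm1.1}(i) one has $v_r(y)\le C|y|^{-\frac{2m+\alpha}{p-1}}$, and by Theorem \ref{thm1.2} one has $(-\Delta)^k v_r>0$ for $1\le k\le m-1$, both on $\{1/4\le |y|\le 2\}$ once $2r_0<\min\{\tau,1/2\}$. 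Since the rescaling is monotone, $\sup_{r/2\le|x|\le 3r/2}u\big/\inf_{r/2\le|x|\le 3r/2}u=\sup_A v_r\big/\inf_A v_r$ with $A:=\{1/2\le|y|\le 3/2\}$, so it suffices to bound $\sup_A v_r/\inf_A v_r$ by a constant independent of $r$.

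Because $A$ is compact and connected, I would cover it by finitely many balls $B_j=B_{\rho_0}(y_j)$ ($\rho_0=\rho_0(n)$, $y_j\in A$) whose fourfold dilates $D_j:=B_{4\rho_0}(y_j)$ avoid the origin and stay well inside $\{1/4<|y|<2\}$, and which overlap consecutively; proving a uniform local estimate $\sup_{B_j}v_r\le C\inf_{B_j}v_r$ and chaining then finishes the proof. Fix $D=D_j$. Since $|y|$ is bounded away from $0$ on $D$, Theorem \ref{thm1.1}(i) bounds both $v_r$ and $V_r:=|y|^\alpha v_r^{p-1}$ in $L^\infty(D)$ uniformly in $r$. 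Applying the polyharmonic Green representation on the ball $D$ (legitimate since $v_r\in C^{2m}(\overline D)$), and using the nonnegativity of $G_m^D$ and of the kernels $H_i^D$ (Boggio's formula) together with $(-\Delta)^{i-1}v_r\ge 0$ on $\partial D$ for $1\le i\le m$ (Theorem \ref{thm1.2}), I get for $y\in D$
\[
v_r(y)=\int_D G_m^D(y,z)\,V_r(z)\,v_r(z)\,dz+h(y),\qquad h(y):=\sum_{i=1}^m\int_{\partial D}H_i^D(y,z)\,(-\Delta)^{i-1}v_r(z)\,d\sigma\ \ge\ 0 .
\]
By Boggio's formula one also has $c_1|y-z|^{2m-n}\le G_m^D(y,z)\le c_2|y-z|^{2m-n}$ for $y,z$ in a fixed concentric sub-ball of $D$, with $c_1,c_2>0$ depending only on $n,m$. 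Thus this is of the form required by Proposition \ref{HarR} with $\sigma=m$ (whose proof carries over to a kernel two-sidedly comparable to the Riesz kernel on interior sub-balls), with $V_r\in L^\infty(D)\subset L^p(D)$ for any $p$ and $v_r\in L^\infty(D)\subset L^r(D)$ for any $r$, all uniformly in $r$. It then remains only to verify that $h$ is Harnack-controlled on $B_j$ with a constant uniform in $r$.

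This last point is the crux, and it uses the super polyharmonic property a second time. Differentiating the formula for $h$ and using $(-\Delta)_y H_i^D=H_{i-1}^D$ for $i\ge 2$ and $(-\Delta)_y H_1^D=0$, one finds $(-\Delta)^k h=\sum_{i=k+1}^m\int_{\partial D}H^D_{i-k}(\cdot,z)(-\Delta)^{i-1}v_r(z)\,d\sigma\ge 0$ for $0\le k\le m-1$; that is, $h$ is a nonnegative polyharmonic function on $D$ that itself obeys the super polyharmonic property. For such a function the interior Harnack inequality holds with a dimensional constant, by descending induction on the order: $(-\Delta)^{m-1}h$ is a nonnegative harmonic function, hence satisfies Harnack; and if $-\Delta w=g$ in $B_R(y_j)$ with $w\ge 0$, $g\ge 0$ and $g$ Harnack-controlled there, then writing $w$ as its $B_R(y_j)$-Green potential of $g$ plus the nonnegative harmonic function with its boundary values, and combining the weak Harnack inequality with $c_1|y-z|^{2-n}\le G_1^{B_R(y_j)}(y,z)\le c_2|y-z|^{2-n}$ on interior sub-balls, shows $w$ is Harnack-controlled on any $B_{R'}(y_j)$ with $R'<R$. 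Iterating $m$ times from $D$ down to $B_j$ gives the required uniform control of $h$; Proposition \ref{HarR} then yields the local Harnack inequality for $v_r$, and chaining over $j$ and undoing the rescaling completes the proof.

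The obstacle I expect to dominate the work is producing a remainder $h$ that is simultaneously nonnegative and, uniformly in $r$, Harnack-controlled. Nonnegativity forces the use of the genuine (positive) Green function $G_m^D$ of the ball rather than the Riesz kernel — a naive splitting $G_m^D=c_{n,m}|y-z|^{2m-n}-(\text{regular part})$ reintroduces a term of uncertain sign with no uniform lower bound to absorb it against — together with the boundary sign conditions of Theorem \ref{thm1.2}; and the uniform Harnack control of $h$, which is false for a general nonnegative polyharmonic function, survives only because $h$ inherits the super polyharmonic property. Localizing onto the balls $D_j$ away from the origin and rescaling \emph{before} the covering argument are what keep all constants independent of $r$.
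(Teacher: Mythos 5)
Your proposal is correct in outline, but it takes a genuinely different (and somewhat more labor-intensive) route than the paper's. Both arguments rescale to $v_r(y)=r^{\frac{2m+\alpha}{p-1}}u(ry)$, split an integral representation into a potential term plus a remainder $h$, and feed the result into Proposition~\ref{HarR}, but the representations and the treatment of $h$ differ. The paper keeps a single \emph{global} Riesz representation on a fixed small ball, $u(x)=c\int_{B_{\tau_0}}|x-y|^{2m-n}|y|^\alpha u^p(y)\,dy+h(x)$, obtained by replacing the iterated Green kernel $G_m$ with $c|x-y|^{2m-n}$ and absorbing the regular difference, the boundary terms, and the far-field contribution into a smooth $h$; Theorem~\ref{thm1.2} then shows $h$ has a \emph{positive lower bound} on a fixed ball, hence $|\nabla\ln h|\le C_1$, and after rescaling $|\nabla\ln h_r|=r|\nabla\ln h|\le C_1$ so the Harnack constant of $h_r$ is automatically uniform in $r$. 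This lets them apply Proposition~\ref{HarR} verbatim, because the kernel is the exact Riesz kernel. Your version instead localizes onto balls $D_j$ away from the origin and uses the local iterated Navier representation on each $D_j$, keeping $h$ as the boundary term, which is nonnegative by Theorem~\ref{thm1.2} and inherits the super polyharmonic property; you then show such an $h$ satisfies an interior Harnack inequality by a descending induction, and apply a version of Proposition~\ref{HarR} to a kernel only \emph{two-sidedly comparable} to $|x-y|^{2m-n}$. Two remarks on what this buys and costs: (i) your self-contained treatment of $h$ is elegant and avoids the JX2021 manipulation the paper cites, but the positivity of the iterated kernels $G_m^{D}$ and $H_i^{D}$ is immediate from $G_1,H_1>0$ (Boggio's formula, which governs Dirichlet rather than Navier boundary conditions, is not what is at play here); (ii) the step replacing the exact Riesz kernel of Proposition~\ref{HarR} by a comparable kernel is a genuine additional burden — it is plausible, but it is precisely what the paper's cleaner splitting of the representation is designed to sidestep, since a naive difference $c|x-y|^{2m-n}-G_m^D(x,y)\ge 0$ would be subtracted, not added, to $h$ and destroy its sign. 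Your overall strategy works modulo filling in that adaptation, but you should make it explicit rather than just asserting it carries over.
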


\begin{proof}
By Lemma \ref{Lem3.2} and the same arguments in \cite[Theorem 1.1]{JX2021} (see the beginning of the proof of Theorem 1.1 there), we can write, modulo a positive constant, that
\begin{align}\label{IE01}
u(x)=\int_{B_{\tau_0}}\frac{|y|^{\alpha}u^{p}(y)}{|x-y|^{n-2m}}dy+h(x)   \quad \mbox{for} ~x \in B_{\tau_0} \setminus \{0\},
\end{align}
where $\tau_0 \in (0, 1/4)$ is small and $h$ is smooth in $B_{\tau_0}$.  Moreover, by using Theorem \ref{thm1.2} we can obtain that $h$ has a positive lower bound in $B_{\tau_0}$. Replacing $u(x)$ by $\big( \frac{\tau_0}{2} \big)^{\frac{2m+\alpha}{p-1}}u \big( \frac{\tau_0}{2} x \big)$,  we may consider the integral equation \eqref{IE01} in $B_2 \setminus \{0\}$ for convenience. Namely,
\begin{align}\label{IE02}
u(x)=\int_{B_{2}}\frac{|y|^{\alpha}u^{p}(y)}{|x-y|^{n-2m}}dy+h(x)   \quad \mbox{for} ~x \in B_{2} \setminus \{0\},
\end{align}
where $u \in C(B_2 \setminus \{0\})$, $|\cdot|^\alpha u^{p} \in L^1(B_2)$, and $h \in C^\infty(B_2)$ is a positive function satisfying
\begin{align}\label{h01}
|\nabla \ln h| \leq C_1 \quad \mbox{in} ~ B_{3/2}.
\end{align}

Let $v(y)=r^{\frac{2m+\alpha}{p-1}}u(ry)$ with $0< r < 1/2$. Then we have, for $y \in B_{2/r} \setminus \{0\}$,
$$
v(y)=\int_{B_{2/r}}\frac{|z|^{\alpha}v^{p}(z)}{|y-z|^{n-2m}}dz+h_{r}(y),
$$
where $h_{r}(y)=r^{\frac{2m+\alpha}{p-1}}h(ry)$. By Theorem \ref{thm1.1} ($i$), we know  that $u(x)\leq C|x|^{-\frac{2m+\alpha}{p-1}}$ and thus $v(y)\leq C$ for $y\in B_{2}\setminus B_{1/10}$. For $|x|=1$, define
$$g_{x}(y)=\int_{B_{2/r}\setminus B_{9/10}(x)}\frac{|z|^{\alpha}v^{p}(z)}{|y-z|^{n-2m}}dz.$$
Then for any $y_{1}, y_{2}\in B_{1/2}(x)$, we have
\begin{align*}
g_{x}(y_{1})&=\int_{B_{2/r}\setminus B_{9/10}(x)}\frac{|y_{2}-z|^{n-2m}}{|y_{1}-z|^{n-2m}}\frac{|z|^{\alpha}v^{p}(z)}{|y_{2}-z|^{n-2m}}dz\\
&\leq C_{n,m}  \int_{B_{2/r}\setminus B_{9/10}(x)}\frac{|z|^{\alpha}v^{p}(z)}{|y_{2}-z|^{n-2m}}dz =C_{n,m}  g_{x}(y_{2}).
\end{align*}
Hence, $g$ satisfies the Harnack inequality in $B_{1/2}(x)$. By \eqref{h01} we have that $h$ also satisfies the Harnack inequality in $B_{1/2}(x)$. Note that
$$v(y)=\int_{B_{9/10}(x)}\frac{|z|^{\alpha}v^{p}(z)}{|y-z|^{n-2m}}dz+g_{x}(y)+h_{r}(y) \quad \mbox{in}~ B_{1/2}(x).$$
It follows from Proposition \ref{HarR} that
$$\sup_{B_{1/2}(x)}v\leq C\inf_{B_{1/2}(x)}v.$$
A standard covering argument leads to
$$\sup_{1/2 \leq |y| \leq 3/2} v(y) \leq  C\inf_{1/2 \leq |y| \leq 3/2} v(y).$$
By rescaling back to $u$, we obtain
$$\sup_{r/2 \leq |x| \leq 3r/2}u(x) \leq C\inf_{r/2 \leq |x| \leq 3r/2 } u(x).$$
This completes the proof.
\end{proof}

Another important tool for proving Theorem \ref{thm1.3} is a monotonicity formula, which will also be used in the proof of Theorem \ref{mth1}.
Let $u$ be a nonnegative solution of equation \eqref{eq1} with $m=2$. Using the Emden-Fowler transformation to the equation \eqref{eq1}, i.e., setting $w(t,\theta)=|x|^{\frac{4+\alpha}{p-1}}u(|x|,\theta)$ with  $t=\ln|x|$ and $\theta=\frac{x}{|x|}$, then the function $w(t,\theta)$ satisfies
\begin{align}\label{eq3}
\partial^{(4)}_tw & +A_{3}\partial^{(3)}_{t}w+A_{2}\partial_{tt}w+A_{1}\partial_{t}w+A_{0}w +\Delta^{2}_{\theta}w \\
& +A_{4}\Delta_{\theta}w +A_{3}\partial_{t}\Delta_{\theta}w+2\partial_{tt}\Delta_{\theta}w=w^{p} \quad \mbox{in} ~ (-\infty, 0) \times \mathbb{S}^{n-1},\nonumber
\end{align}
where $\Delta_\theta$ is the Beltrami-Laplace operator on $\mathbb{S}^{n-1}$ and the coefficients $A_i$ ($i=0, 1, \dots, 4$) are given by
\begin{align}
A_{0}&=B^{4}-2(n-4)B^{3}+(n^{2}-10n+20)B^{2}+2(n-2)(n-4)B,\nonumber\\
A_{1}&=-4B^{3}+6(n-4)B^{2}-2(n^{2}-10n+20)B-2(n-2)(n-4),\nonumber\\
A_{2}&=6B^{2}-6(n-4)B+n^{2}-10n+20,\nonumber\\
A_{3}&=-4B+2n-8,\nonumber\\
A_{4}&=2B^{2}-2(n-4)B - 2(n-4), \nonumber
\end{align}
with
$$
B:=\frac{4+\alpha}{p-1}.
$$
Furthermore, we get that the coefficients $A_{0}$, $A_{1}$ and $A_{3}$ have the following properties.
\begin{lem}\label{SignA}
Let $n\geq5$ and $\alpha > -4$.

\begin{itemize}
\item [(i)] For $\frac{n+\alpha}{n-4}<p<\frac{n+4+2\alpha}{n-4}$, there holds
$$A_{0}>0,\quad A_{1}>0,\quad A_{3}<0.$$

\item [(ii)] For $p > \frac{n+4+2\alpha}{n-4}$,  there holds
$$A_{0}>0,\quad A_{1}<0,\quad A_{3}>0.$$
\end{itemize}
\end{lem}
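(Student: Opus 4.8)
The plan is to treat Lemma \ref{SignA} as a purely algebraic fact about the polynomials $A_0, A_1, A_3$ in the single variable $B = \frac{4+\alpha}{p-1}$, and to identify exactly which range of $B$ corresponds to each hypothesis on $p$. First I would note that, since $\alpha > -4$ and $p > 1$, we have $B > 0$. Next, the key translation: the condition $p < \frac{n+4+2\alpha}{n-4}$ is equivalent (after clearing denominators, using $p-1>0$ and $n-4>0$) to $B > \frac{n-4}{2}$, while $p > \frac{n+4+2\alpha}{n-4}$ is equivalent to $0 < B < \frac{n-4}{2}$; and the condition $p < \frac{n+4}{n-4}$ forces $B < n-4$, while $p > \frac{n+\alpha}{n-4}$ forces $B > \frac{n-4}{?}$—I would double-check that lower cutoff, but in any case the relevant window in part (i) is $\frac{n-4}{2} < B < n-4$ (possibly with a sharper lower bound coming from $p > \frac{n+\alpha}{n-4}$, which is not needed for the sign statements). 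So the whole lemma reduces to: show $A_0 > 0$ for $0 < B < n-4$; show $A_1 > 0$ and $A_3 < 0$ for $\frac{n-4}{2} < B$; and $A_1 < 0$, $A_3 > 0$ for $0 < B < \frac{n-4}{2}$.

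The sign of $A_3 = -4B + 2n - 8 = -4\big(B - \frac{n-4}{2}\big)$ is immediate and accounts for both parts at once. For $A_1$, I would factor out: write $A_1 = -2\big(2B^3 - 3(n-4)B^2 + (n^2-10n+20)B + (n-2)(n-4)\big)$ and try to exhibit $B = \frac{n-4}{2}$ as a root of the cubic in brackets (which is forced, since the sign of $A_1$ must change there); indeed one checks $2(\frac{n-4}{2})^3 - 3(n-4)(\frac{n-4}{2})^2 + (n^2-10n+20)\frac{n-4}{2} + (n-2)(n-4)$ — if this vanishes, polynomial division gives $A_1 = -2\big(B - \frac{n-4}{2}\big) q(B)$ for a quadratic $q$, and I would then show $q(B) > 0$ for all $B > 0$ (likely $q$ has no positive roots, checkable via its discriminant or by completing the square), which yields $A_1 > 0 \iff B > \frac{n-4}{2}$. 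If $B=\frac{n-4}{2}$ is not an exact root, then instead I would simply analyze the cubic directly: compute its value and derivative, locate its real roots, and confirm that on $(0,\infty)$ it is negative precisely for $B>\frac{n-4}{2}$ within the relevant sub-window — but the cleaner factorization route is what I expect to work.

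For $A_0 = B\big(B^3 - 2(n-4)B^2 + (n^2-10n+20)B + 2(n-2)(n-4)\big) = B\cdot P(B)$, since $B>0$ it suffices to show the cubic factor $P(B) > 0$ on the relevant interval. Here I would look for rational roots of $P$: candidates like $B = -(n-4)$ or $B$ related to $n-2$, $n-4$. If $P$ factors as $(B + a)(B^2 + bB + c)$ with $a > 0$ and the quadratic positive-definite (or with both roots negative), then $P(B) > 0$ for all $B > 0$, which is even stronger than needed. Alternatively, noting that in part (i) we have the genuine constraint $0 < B < n-4$ (from $p < \frac{n+4}{n-4}$), I could instead show $P > 0$ just on that interval by evaluating $P$ at the endpoints and controlling the derivative. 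I expect the main obstacle to be purely computational: correctly factoring the two cubics (for $A_0$ and $A_1$) over $\mathbb{Q}(n)$, and being careful with the sign bookkeeping when converting inequalities between $p$ and inequalities in $B$ — since $p-1$ and $n-4$ are positive the direction of inequalities is preserved, but one must track that the critical value $p = \frac{n+4+2\alpha}{n-4}$ really does correspond to $B = \frac{n-4}{2}$ independently of $\alpha$, which is the slightly subtle point that makes the lemma clean.
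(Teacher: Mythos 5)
Your strategy mirrors the paper's in its overall shape: translate the $p$-constraints into the $\alpha$-independent windows $B\in(\frac{n-4}{2},\,n-4)$ for part (i) and $B\in(0,\,\frac{n-4}{2})$ for part (ii), read off $A_3 = -4\bigl(B-\frac{n-4}{2}\bigr)$ directly, and then analyze $A_1$ and $A_0$ as cubics in $B$. Where you diverge is in the treatment of the cubics: you propose an explicit factorization by exhibiting a rational root, while the paper instead locates the critical points of each cubic via its derivative and argues that the relevant extremum on the interval occurs at an endpoint, then evaluates there ($g(0)<0$, $g(\frac{n-4}{2})=0$, $g(n-4)>0$ for $A_1$; $h(0)>0$, $h(n-4)=0$ for $A_0$). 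Your route is a legitimate alternative — and arguably tidier — but your guesses about the signs of the resulting quadratic cofactors are off and would need correcting. For $A_1$ one gets
$$A_1 = -4\Bigl(B - \tfrac{n-4}{2}\Bigr)\,q(B), \qquad q(B) = B^2 - (n-4)B - (n-2),$$
and $q$ is \emph{not} positive for $B>0$: it satisfies $q(0) = -(n-2) < 0$, and its larger root $\frac{(n-4)+\sqrt{n^2-4n+8}}{2}$ lies strictly beyond $n-4$. The correct and sufficient fact is $q(B)<0$ on $(0,n-4)$, so that $-4\bigl(B-\frac{n-4}{2}\bigr)q(B)$ carries the sign of $B-\frac{n-4}{2}$. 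Similarly, for $A_0 = B\,P(B)$ the rational root of $P$ is $B=n-4$ (not a negative one), giving
$$P(B) = \bigl(B - (n-4)\bigr)\bigl(B^2 - (n-4)B - 2(n-2)\bigr),$$
and again the quadratic cofactor is negative at $0$ with its positive root beyond $n-4$, so $P>0$ on $(0,n-4)$ because both factors are negative there; your fallback of checking positivity only on $(0,n-4)$ is exactly what is needed, since the quadratic is not positive-definite. Finally, your translation paragraph swaps the two pairings: it is $p > \frac{n+\alpha}{n-4}$ that forces $B < n-4$ and $p < \frac{n+4+2\alpha}{n-4}$ that forces $B > \frac{n-4}{2}$; you write the reverse (and bring in $p<\frac{n+4}{n-4}$, which is not among the lemma's hypotheses), though you do land on the correct window.
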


\begin{Remark}
Note that for $p=\frac{n+4+2\alpha}{n-4}$, equation \eqref{eq1} is critical and conformally invariant. At this moment, we can see that $A_1=A_3=0$.
\end{Remark}

\begin{proof}
 Direct calculations give that
\begin{equation*}
B\in
\begin{cases}
\left(\frac{n-4}{2}, ~n-4\right) \quad & \text{for}~\frac{n+\alpha}{n-4}<p<\frac{n+4+2\alpha}{n-4}, \\
\left(0, ~ \frac{n-4}{2}\right) \quad & \text{for}~\frac{n+4+2\alpha}{n-4}<p.
\end{cases}
\end{equation*}
Let
$$f(s)=-4s+2n-8.$$
Then for $s>\frac{n-4}{2}~(resp. <)$, we have $f(s)<0~(resp. >0)$. This implies that $A_{3}<0~(resp.>0)$ for $\frac{n+\alpha}{n-4}<p<\frac{n+4+2\alpha}{n-4}~(resp.  \frac{n+4+2\alpha}{n-4}<p )$.

To judge the sign of $A_1$, consider the function
$$g(s)=-4s^{3}+6(n-4)s^{2}-2(n^{2}-10n+20)s-2(n-2)(n-4). $$
Then
$$g'(s)=-12s^{2}+12(n-4)s-2(n^{2}-10n+20).$$
Two roots of $g'(s)=0$ are given by $s_1=\frac{3(n-4) + \sqrt{3(n-2)^2+12}}{6}$ and $s_2=\frac{3(n-4) - \sqrt{3(n-2)^2+12}}{6}$. When $n\geq 8$, we have $n^2-10n +20>0$ and this yields
$$
\frac{n-4}{2}<s_1<n-4,\quad \quad 0<s_2<\frac{n-4}{2}.
$$
When $5\leq n\leq 7$, we have $n^2-10n+20<0$ and hence
$$
s_1>n-4,\quad\quad s_2 < 0.
$$
These imply that
$$
g(s) > \min{\{ g(\frac{n-4}{2}), ~ g(n-4)} \}  \quad \forall~s\in (\frac{n-4}{2}, n-4),
$$
and
$$ g(s) <  \max\{ g(0), ~ g(\frac{n-4}{2}) \} \quad \forall~ s \in (0, \frac{n-4}{2}).$$
On the other hand, direct calculations give
$$g(0)<0, \quad g(\frac{n-4}{2})=0 \quad \mbox{and}\quad g(n-4)>0.$$
Hence, we obtain that $A_{1}>0$ for $\frac{n-4}{2}<B<n-4$ and $A_1<0$ for $0<B<\frac{n-4}{2}$.

Finally, let's determine the sign of $A_0$. Since
\begin{align}
A_{0}&=B^{4}-2(n-4)B^{3}+(n^{2}-10n+20)B^{2}+2(n-2)(n-4)B \nonumber\\
&=B[B^{3}-2(n-4)B^{2}+(n^{2}-10n+20)B+2(n-2)(n-4)], \nonumber
\end{align}
we consider the function
$$
h(s)=s^{3}-2(n-4)s^{2}+(n^{2}-10n+20)s+2(n-2)(n-4)
$$
for $0<s<n-4$.  Notice that
$$h'(s)=3s^{2}-4(n-4)s+(n^{2}-10n+20)$$
and $h'(n-4)=4-2n<0$. Then we have
$$
h(s) > \min\{ h(0), h(n-4) \}  \quad \forall~ s \in (0, n-4).
$$
It is easy to check that
$$
h(0)>0 \quad \text{and} \quad h(n-4)=0.
$$
This implies $A_{0}>0$ for all $p > \frac{n+\alpha}{n-4}$.
\end{proof}

Next, we introduce an important energy function $E(t,w)$ defined by
\begin{align}\label{Monf}
E(t;w):= & \int_{\mathbb{S}^{n-1}}\partial^{(3)}_{t}w\partial_{t}w d\sigma-\frac{1}{2}\int_{\mathbb{S}^{n-1}}\left[(\partial_{t}w)^{2}-2A_{3}\partial_{t}^{2}w\partial_{t}w-A_{2}(\partial_{t}w)^{2}\right]d\sigma  \nonumber \\
& +\frac{1}{2}\int_{\mathbb{S}^{n-1}}\left[|\Delta_{\theta}w|^{2}-A_{4}|\nabla_{\theta}w|^{2}\right]d\sigma +\frac{A_{0}}{2}\int_{\mathbb{S}^{n-1}}w^{2}d\sigma -\frac{1}{p+1}\int_{\mathbb{S}^{n-1}}w^{p+1}d\sigma  \nonumber \\
& -\int_{\mathbb{S}^{n-1}}|\partial_{t}\nabla_{\theta}w|^{2}d\sigma.
\end{align}
Combining the equation \eqref{eq3} and the singularity estimate in Theorem \ref{thm1.1}, we obtain the following monotonicity formula for $E(t;w)$ and the uniform boundedness for $w$. The proofs are essentially the same as those in \cite{Y2020}, so we omit them here.
\begin{pro}\label{Mon01}
Suppose that $n\geq5$, $\alpha > -4$ and $w$ is a nonnegative $C^{4}$ solution of \eqref{eq3}. Then $E(t;w)$ is non-increasing (resp. non-decreasing) in $t\in(-\infty,0)$ for $\frac{n+\alpha}{n-4} < p \leq \frac{n+4+2\alpha}{n-4}$ (resp. $p > \frac{n+4+2\alpha}{n-4} $). Furthermore, we have
\begin{align}\label{10}
\frac{d}{dt}E(t;w)=A_{3}\int_{\mathbb{S}^{n-1}}\left[(\partial_{tt}w)^{2}+|\partial_{t}\nabla_{\theta}w|^{2}\right]d\sigma-A_{1}\int_{\mathbb{S}^{n-1}}(\partial_{t}w)^{2} d\sigma.
\end{align}
\end{pro}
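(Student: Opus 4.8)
The plan is to derive the differentiation identity \eqref{10} by a direct computation that uses equation \eqref{eq3} and integration by parts on the closed manifold $\mathbb{S}^{n-1}$, and then to read off the monotonicity of $t\mapsto E(t;w)$ from the signs of $A_1$ and $A_3$ recorded in Lemma \ref{SignA}; the uniform boundedness of $w$ will follow by translating the singularity estimate of Theorem \ref{thm1.1} into the Emden-Fowler variables and invoking interior elliptic regularity.

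First I would record the uniform bounds on $w$. With $m=2$ and $B=\frac{4+\alpha}{p-1}$, Theorem \ref{thm1.1}(i) gives $\sum_{i\le 3}|x|^{B+i}|\nabla^i u(x)|\le C$ on $B_{1/2}\setminus\{0\}$, and under $w(t,\theta)=|x|^{B}u(|x|,\theta)$ with $t=\ln|x|$ the chain rule turns this into a bound on all mixed $t$- and $\theta$-derivatives of $w$ of total order at most $3$, uniformly in $t<0$. To also control the top-order derivatives I would rescale: for small $r>0$ the function $u_r(x)=r^{B}u(rx)$ solves $(-\Delta)^2 u_r=|x|^{\alpha}u_r^{p}$ on the fixed annulus $\{1/2<|x|<2\}$ with $C^3$-norm bounded independently of $r$ (by the estimate just quoted), so interior Schauder / $W^{4,q}$ estimates for the bilaplacian give a uniform $C^4$ bound on $u_r$ on a slightly smaller annulus; scaling back and re-expressing in the $(t,\theta)$ variables yields uniform bounds on all derivatives of $w$ of total order at most $4$ — in particular on $\partial^{(4)}_{t}w$, $\partial_{tt}\Delta_\theta w$ and $\Delta_\theta^{2}w$ — as $t\to-\infty$. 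Consequently every integral in \eqref{Monf} is finite, $E(\cdot;w)\in C^1(-\infty,0)$, and differentiation under the integral sign is justified.

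The computation of $\frac{d}{dt}E(t;w)$ is the bulk of the argument. Differentiating \eqref{Monf} in $t$, the only term producing a fourth $t$-derivative is $\frac{d}{dt}\int_{\mathbb{S}^{n-1}}\partial^{(3)}_{t}w\,\partial_{t}w\,d\sigma$, into which I substitute the value of $\partial^{(4)}_{t}w$ supplied by equation \eqref{eq3}. Every angular term is then integrated by parts on $\mathbb{S}^{n-1}$, where no boundary contributions arise, using self-adjointness of $\Delta_\theta$ and the identities
\[
\int_{\mathbb{S}^{n-1}}\Delta_\theta^{2}w\,\partial_{t}w\,d\sigma=\int_{\mathbb{S}^{n-1}}\Delta_\theta w\,\Delta_\theta\partial_{t}w\,d\sigma,\qquad \int_{\mathbb{S}^{n-1}}\partial_{t}\Delta_\theta w\,\partial_{t}w\,d\sigma=-\int_{\mathbb{S}^{n-1}}|\nabla_\theta\partial_{t}w|^{2}\,d\sigma,
\]
together with $\int_{\mathbb{S}^{n-1}}\nabla_\theta\partial_{t}w\cdot\nabla_\theta\partial_{tt}w\,d\sigma=-\int_{\mathbb{S}^{n-1}}\Delta_\theta\partial_{t}w\,\partial_{tt}w\,d\sigma$. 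Collecting terms, the contributions carrying $w^{p}$, $A_0 w$, $\partial_{tt}w$, $\partial^{(3)}_{t}w$, $\Delta_\theta^{2}w$, $A_4\Delta_\theta w$ and $\partial_{tt}\Delta_\theta w$ cancel in pairs against those produced by differentiating the remaining summands of $E$, and exactly the right-hand side of \eqref{10} survives — coming from the $A_1\partial_{t}w$ and $A_3\partial_{t}\Delta_\theta w$ terms of \eqref{eq3} and the $A_3$-weighted quadratic term of \eqref{Monf}. This is a lengthy but entirely routine and formal calculation, identical in structure to the $\alpha=0$ case treated in \cite{Y2020}.

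The monotonicity then follows at once from Lemma \ref{SignA}. For $\frac{n+\alpha}{n-4}<p\le\frac{n+4+2\alpha}{n-4}$ one has $A_3\le 0$ and $A_1\ge 0$, so \eqref{10} exhibits $\frac{d}{dt}E(t;w)$ as a sum of two nonpositive quantities, whence $E(\cdot;w)$ is non-increasing on $(-\infty,0)$; for $p>\frac{n+4+2\alpha}{n-4}$ one has $A_3>0$ and $A_1<0$, so both terms on the right of \eqref{10} are nonnegative and $E(\cdot;w)$ is non-decreasing. I expect the only genuine (though still routine) obstacle to be the integration-by-parts bookkeeping in the third step; given the uniform estimates of Theorem \ref{thm1.1} and Lemma \ref{SignA}, the rest is formal.
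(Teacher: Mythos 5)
Your proposal is correct in approach and matches the route the paper relies on: the paper omits the proof, stating only that it is ``essentially the same as in \cite{Y2020},'' and what that reference does is exactly what you describe --- differentiate $E(t;w)$, substitute $\partial_t^{(4)}w$ from \eqref{eq3}, integrate by parts on $\mathbb{S}^{n-1}$, and read off the sign from Lemma \ref{SignA}. Two small remarks. First, the paragraph on uniform $C^4$ bounds via rescaling and interior regularity is really the content of Proposition \ref{Mon02}; for Proposition \ref{Mon01} it is not needed, since $w\in C^4$ and $\mathbb{S}^{n-1}$ is compact already make every integral in \eqref{Monf} finite at each fixed $t$ and let you differentiate under the integral sign, so this step is superfluous rather than wrong. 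Second, if you carry the bookkeeping through with the formula \eqref{Monf} exactly as printed, the terms $\int\partial_t^{(3)}w\,\partial_{tt}w\,d\sigma$ (from $\tfrac{d}{dt}\int\partial_t^{(3)}w\,\partial_t w\,d\sigma$) and $-\int\partial_t w\,\partial_{tt}w\,d\sigma$ (from $\tfrac{d}{dt}(-\tfrac12\int(\partial_t w)^2\,d\sigma)$) do not cancel; the cancellation you assert requires the first bracketed summand of \eqref{Monf} to read $(\partial_{tt}w)^2$ rather than $(\partial_t w)^2$, which is a typographical slip in the paper and should be stated explicitly if you write the computation out.
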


\begin{pro}\label{Mon02}
Let $w$ be a nonnegative $C^{4}$ solution of \eqref{eq3} with $1<p<\frac{n+4}{n-4}$. Then $w$, $\partial_{t}w$, $\partial_{tt}w$, $\partial_{ttt}w$, $\Delta_{\theta}w$ and $|\nabla_{\theta}w|$ are uniformly bounded in $(-\infty, -\ln2)\times\mathbb{S}^{n-1}$.
\end{pro}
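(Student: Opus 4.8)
\textbf{Proof proposal for Proposition \ref{Mon02}.}

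The plan is to obtain all six bounds directly from the singularity estimate of Theorem \ref{thm1.1}(i), by unwinding the Emden--Fowler change of variables; essentially no new analysis is needed. Going back from \eqref{eq3} to \eqref{eq1}, a solution $w$ of \eqref{eq3} corresponds to the solution $u(x)=|x|^{-B}w(\ln|x|,x/|x|)$ of \eqref{eq1} with $m=2$, where $w(t,\theta)=|x|^{B}u(|x|,\theta)$, $B=\tfrac{4+\alpha}{p-1}$, $t=\ln|x|$, $\theta=x/|x|$; the region $(-\infty,-\ln2)\times\mathbb{S}^{n-1}$ corresponds precisely to the punctured ball $B_{1/2}\setminus\{0\}$. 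Writing $v(t,\theta)=u(e^{t},\theta)$ and differentiating, one checks by induction on $k$ that $\partial_t^{k}v$ is a linear combination, with constant coefficients depending only on $k$, of the weighted radial derivatives $e^{jt}\partial_r^{j}u$ for $0\le j\le k$; hence $\partial_t^{k}w=\partial_t^{k}(e^{Bt}v)$ is a linear combination of $e^{(B+j)t}\partial_r^{j}u=|x|^{B+j}\partial_r^{j}u$ for $0\le j\le k$. Since $1<p<\tfrac{n+4}{n-4}$ and $\alpha>-4$, Theorem \ref{thm1.1}(i) gives, with a uniform constant $C$, the bound $|x|^{B+i}|\nabla^{i}u(x)|\le C$ for all $0\le i\le 2m-1=3$ and $x\in B_{1/2}\setminus\{0\}$; in particular $|x|^{B+j}|\partial_r^{j}u|\le C$ for $j=0,1,2,3$. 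Taking $k=0,1,2,3$ then yields the uniform bounds for $w$, $\partial_{t}w$, $\partial_{tt}w$ and $\partial_{ttt}w$ on $(-\infty,-\ln2)\times\mathbb{S}^{n-1}$.

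For the angular quantities I would use the polar decompositions $\nabla u=(\partial_r u)\tfrac{x}{|x|}+\tfrac1r\nabla_{\theta}u$ and $\Delta u=\partial_{rr}u+\tfrac{n-1}{r}\partial_r u+\tfrac1{r^2}\Delta_{\theta}u$. These give $\nabla_\theta w=|x|^{B}\nabla_{\theta}u$ and $\Delta_\theta w=|x|^{B+2}\big(\Delta u-\partial_{rr}u\big)-(n-1)\,|x|^{B+1}\partial_r u$. Using $|\nabla_{\theta}u|\le |x|\,|\nabla u|$ and $|\Delta u|+|\partial_{rr}u|\le C|\nabla^{2}u|$, both right-hand sides are linear combinations of the weighted quantities $|x|^{B+i}|\nabla^{i}u|$ with $i\le 2$, hence are bounded on $B_{1/2}\setminus\{0\}$ by Theorem \ref{thm1.1}(i). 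This establishes the uniform bounds for $|\nabla_\theta w|$ and $\Delta_\theta w$, and completes the proof.

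I do not expect a genuine obstacle: the statement is essentially a coordinate-invariant reformulation of Theorem \ref{thm1.1}(i). The one point needing care is matching the orders of differentiation — one must verify that the top-order term $\partial_{ttt}w$ involves only up to third-order derivatives of $u$, which is exactly $i=2m-1=3$, the highest order controlled by Theorem \ref{thm1.1}(i); thus the $C^{2m}=C^{4}$ regularity of $u$ already suffices and no elliptic bootstrapping is needed. It is also worth recording that the constant $C$ in Theorem \ref{thm1.1}(i) is independent of $x$, which is exactly what makes the resulting bounds for $w$ and its derivatives uniform in $t\in(-\infty,-\ln2)$.
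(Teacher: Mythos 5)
Your proposal is correct and takes essentially the same route the paper intends: the paper explicitly attributes Proposition~\ref{Mon02} to a combination of the singularity estimate in Theorem~\ref{thm1.1} with the Emden--Fowler change of variables (deferring details to Yang~\cite{Y2020}), and your argument carries out exactly that combination. The key point you correctly identify is that every quantity listed involves at most third-order derivatives of $u$, which is precisely the range $i\le 2m-1=3$ controlled by Theorem~\ref{thm1.1}(i), so no elliptic bootstrapping or use of equation~\eqref{eq3} is needed.
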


From the above two propositions, we know that $\lim_{t\rightarrow-\infty}E(t;w)$ exists for $\frac{n+\alpha}{n-4}<p<\frac{n+4}{n-4}$ and $-4< \alpha <4$. Define
\begin{align}\label{1u067}
\widetilde{E}(r;u):=E(t;w)
\end{align}
with $t=\ln r$. Then we get
$$\widetilde{E}(0;u) := \lim_{r \rightarrow 0^+}\widetilde{E}(r;u) = \lim_{t\rightarrow-\infty}E(t;w).$$
For any $\lambda>0$, define $u^{\lambda}(x)=\lambda^{\frac{4+\alpha}{p-1}}u(\lambda x)$. Then the function $u^\lambda(x)$ satisfies
$$\Delta^{2}u^{\lambda}(x)=|x|^{\alpha}(u^{\lambda}(x))^{p}  \quad\mbox{in}~ B_{1/\lambda}(0)\setminus\{0\},$$
and
$$\widetilde{E}(r;u^{\lambda})=\widetilde{E}(r\lambda;u).$$

Now, using the Harnack inequality in Proposition \ref{Lem4.1} and the monotonicity formula in Proposition \ref{Mon01}, we show the following result which is important in establishing the lower bound of singular solutions.

\begin{pro}\label{lim}
Let $m=2$,  $-4<\alpha<4$ and $\frac{n+\alpha}{n-4}<p<\frac{n+4}{n-4}$. Suppose that $u\in C^{4}(B_1\setminus{\{0}\})$ is a positive solution to \eqref{eq1} with a non-removable singularity at the origin. If
$$\liminf_{|x|\rightarrow0}|x|^{\frac{4+\alpha}{p-1}}u(x)=0,$$
then
$$\lim_{|x|\rightarrow0}|x|^{\frac{4+\alpha}{p-1}}u(x)=0.$$
\end{pro}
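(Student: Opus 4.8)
The plan is to combine the Harnack inequality (Proposition~\ref{Lem4.1}), the monotonicity formula (Proposition~\ref{Mon01}), and the translation-scaling identity $\widetilde{E}(r;u^{\lambda})=\widetilde{E}(r\lambda;u)$ to run a blow-down argument along a sequence where the spherical average degenerates. Set $w(t,\theta)=|x|^{\frac{4+\alpha}{p-1}}u(|x|,\theta)$ with $t=\ln|x|$, so $w$ solves \eqref{eq3} and, by Proposition~\ref{Mon02}, $w$ together with its relevant derivatives is uniformly bounded on $(-\infty,-\ln 2)\times\mathbb{S}^{n-1}$. The hypothesis $\liminf_{|x|\to 0}|x|^{\frac{4+\alpha}{p-1}}u(x)=0$ means there is a sequence $t_k\to-\infty$ and $\theta_k\in\mathbb{S}^{n-1}$ with $w(t_k,\theta_k)\to 0$. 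Applying Proposition~\ref{Lem4.1} on the dyadic shells $\{r_k/2\le|x|\le 3r_k/2\}$ with $r_k=e^{t_k}$, one upgrades this pointwise smallness to uniform smallness: $\sup_{|t-t_k|\le \ln(3/2)}\,\max_{\mathbb{S}^{n-1}} w(t,\cdot)\to 0$ as $k\to\infty$. In particular $\bar w(t_k):=\fint_{\mathbb{S}^{n-1}}w(t_k,\cdot)\,d\sigma\to 0$.

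Next I would use the energy. Since $\lim_{t\to-\infty}E(t;w)=:E_\infty$ exists (it is monotone and, by Proposition~\ref{Mon02}, bounded), and since all the integrands in \eqref{Monf} are controlled by $w$ and its derivatives on the shell, one can pass to the limit along $t_k$. The idea is to show $E_\infty\le 0$, indeed $E_\infty=0$: on the shell around $t_k$ the function $w$ is uniformly small, so the nonnegative ``potential'' terms $\frac{A_0}{2}\int w^2$ and the gradient terms are small, while the negative term $-\frac{1}{p+1}\int w^{p+1}$ is also small; the genuinely problematic terms are the ones involving $t$-derivatives of $w$, which are \emph{not} a priori small just because $w$ is small. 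To handle these I would feed the smallness of $w$ back into the integral equation \eqref{IE02} (equivalently, into interior elliptic estimates for \eqref{eq3}): uniform smallness of $w$ on a slightly larger shell forces, by the representation formula and Schauder/$L^p$ interior estimates applied to $(-\Delta)^m$, uniform smallness of $\partial_t w,\partial_{tt}w,\partial_{ttt}w$ and of the angular derivatives on the central shell. Hence every term in $E(t_k;w)$ tends to $0$, so $E_\infty=0$.

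Finally I would run the standard dichotomy: suppose, for contradiction, that $\limsup_{|x|\to 0}|x|^{\frac{4+\alpha}{p-1}}u(x)>0$, i.e.\ $\limsup_{t\to-\infty}\bar w(t)>0$ (using Harnack once more to pass between $\max_{\mathbb{S}^{n-1}}w$ and $\bar w$). Pick $s_k\to-\infty$ with $w(s_k,\cdot)$ bounded below by a positive constant. By the uniform $C^4$ bounds of Proposition~\ref{Mon02} and interior estimates, the translates $w(\cdot+s_k,\cdot)$ converge in $C^4_{loc}(\mathbb{R}\times\mathbb{S}^{n-1})$ to a bounded entire solution $w_\infty\not\equiv 0$ of \eqref{eq3}, with $E(t;w_\infty)\equiv E_\infty=0$ for all $t$. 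By the monotonicity identity \eqref{10}, $\frac{d}{dt}E(t;w_\infty)\equiv 0$ forces (since $A_3$ and $A_1$ have strict and opposite signs away from the conformal exponent, by Lemma~\ref{SignA}, and the integrands are nonnegative) $\partial_t w_\infty\equiv 0$ and $\partial_t\nabla_\theta w_\infty\equiv 0$; thus $w_\infty=w_\infty(\theta)$ is $t$-independent and solves $\Delta_\theta^2 w_\infty + A_4\Delta_\theta w_\infty + A_0 w_\infty = w_\infty^p$ on $\mathbb{S}^{n-1}$. Evaluating $E\equiv 0$ on this $t$-independent configuration gives $\frac12\int(|\Delta_\theta w_\infty|^2-A_4|\nabla_\theta w_\infty|^2)+\frac{A_0}{2}\int w_\infty^2-\frac{1}{p+1}\int w_\infty^{p+1}=0$; combining with the equation (multiply by $w_\infty$ and integrate) yields an algebraic relation forcing $\int w_\infty^{p+1}=0$, hence $w_\infty\equiv 0$, a contradiction. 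Therefore $\limsup_{|x|\to 0}|x|^{\frac{4+\alpha}{p-1}}u(x)=0$, which is the claim.

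I expect the main obstacle to be the second step: promoting smallness of $w$ itself to smallness of all the $t$-derivatives that appear in $E$, so as to conclude $E_\infty=0$. This requires a careful localized use of the integral representation \eqref{IE02} together with interior regularity for the polyharmonic operator on the shells, keeping the constants uniform in $k$; the translation/scaling invariance of $\widetilde E$ is what makes the estimates uniform. The final contradiction step is then a clean consequence of the rigidity in the monotonicity formula (Lemma~\ref{SignA} is exactly what rules out $p=\frac{n+4+2\alpha}{n-4}$, where $A_1=A_3=0$ and the argument would break).
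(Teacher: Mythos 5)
Your argument for the non-critical range $p\neq\frac{n+4+2\alpha}{n-4}$ is essentially sound and runs parallel to the paper's Case 1, though your route to $E_\infty=0$ differs in flavor from the paper's. The paper blows down along $v_i(x)=r_i^{\frac{4+\alpha}{p-1}}u(r_ix)\to v$ in $C^4_{loc}(\mathbb{R}^n\setminus\{0\})$, uses Theorem~\ref{thm1.2} to get $-\Delta v\ge 0$, and then applies the strong maximum principle at $v(e_1)=0$ to conclude $v\equiv 0$, after which $\widetilde E(1;v)=0$ is immediate. You instead use the Harnack inequality plus interior elliptic estimates to promote $C^0$-smallness of $w$ on a shell to $C^4$-smallness, then evaluate all terms of $E(t_k;w)$. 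Your route works (the bootstrap from $\|w\|_{C^0}$ small to $\|w\|_{C^4}$ small using the equation $\Delta^2 v=|x|^\alpha v^p$ and Schauder on the fixed annulus is legitimate), and it avoids invoking the super polyharmonic property at this step, but it is no simpler once the estimate-chasing is written out. Your contradiction step via the translated limit $w_\infty$, the rigidity of \eqref{10}, and the Pohozaev-type identity combining $E\equiv 0$ with the $t$-independent ODE on $\mathbb{S}^{n-1}$ is exactly the paper's Steps 2--3.

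The genuine gap is that your proof does not cover the conformally invariant exponent $p=\frac{n+4+2\alpha}{n-4}$, which lies strictly inside the allowed range $(\frac{n+\alpha}{n-4},\frac{n+4}{n-4})$ whenever $-4<\alpha<0$, and is therefore part of the statement you are asked to prove. You acknowledge this yourself: at that exponent $A_1=A_3=0$, so $\frac{d}{dt}E\equiv 0$ gives no information, and neither the conclusion $\partial_t w_\infty\equiv 0$ nor the $E_\infty=0$ step carries through in the form you describe. The paper treats this as a separate Case 2 with a materially different argument: it shows $\liminf_{|x|\to 0}u(x)=\infty$ (using super polyharmonicity and the maximum principle), chooses local minimum points $r_i$ of $r^{\frac{4+\alpha}{p-1}}\bar u(r)$, identifies the blow-down $\varphi_i\to \tfrac12|y|^{4-n}+\tfrac12$ explicitly by exploiting the integral representation and the minimality condition, and then reaches a contradiction by computing $E$ along this explicit profile and obtaining $\tfrac{A_0}{2}|\mathbb{S}^{n-1}|>0$. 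Without an argument of this kind your proof of Proposition~\ref{lim} is incomplete for $p=\frac{n+4+2\alpha}{n-4}$.
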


\begin{proof} We show the proof by discussing two cases separately: $p\neq\frac{n+4+2\alpha}{n-4}$ and $p = \frac{n+4+2\alpha}{n-4}$.

\textbf{Case 1:} For $\frac{n+\alpha}{n-4}<p<\frac{n+4}{n-4}$ and $p\neq\frac{n+4+2\alpha}{n-4}$. In this case, we need to use some arguments in Yang-Zou \cite{YZ} and divide the proof into three steps.

{\it Step 1}. {\it If $\liminf\limits_{|x|\rightarrow0}|x|^{\frac{4+\alpha}{p-1}}u(x)=0$, then there holds $\lim\limits_{r\rightarrow0^{+}}\widetilde{E}(r;u)=0$. }
Since $\liminf\limits_{|x|\rightarrow0}|x|^{\frac{4+\alpha}{p-1}}u(x)=0$, there exists a sequence of points ${\{x_i}\}$ such that $x_{i}\rightarrow0$ and
$$
|x_{i}|^{\frac{4+\alpha}{p-1}}u(x_{i})\rightarrow0  \quad \mbox{as}~ i\rightarrow\infty.
$$
Let $r_{i}:=|x_{i}|$. Using the Harnack inequality in Proposition \ref{Lem4.1}, we have
\begin{align}\label{asy01}
r_{i}^{\frac{4+\alpha}{p-1}}u(r_{i}e_{1})\rightarrow0  \quad \mbox{as}~ i\rightarrow\infty,
\end{align}
where $e_{1}=(1,0,\cdots,0)\in\mathbb{R}^{n}$. Define $v_{i}(x)=r_{i}^{\frac{4+\alpha}{p-1}}u(r_{i}x)$, then the function $v_i(x)$ satisfies
$$\Delta^{2}v_{i}(x)=|x|^{\alpha}v_{i}^{p}(x)  \quad  x\in B_{ 2/r_{i} }\setminus\{0\}.$$
It follows from Theorem \ref{thm1.1} and Proposition \ref{Lem4.1} that $v_{i}$ is locally uniformly bounded away from the origin. By elliptic estimates and Theorem \ref{thm1.2} there exists $v \in C^4(\mathbb{R}^n\setminus \{0\})$ such that $v_{i}\rightarrow v$ in $C_{loc}^4(\mathbb{R}^n\setminus \{0\})$ as $i\to \infty$,  and $v$ satisfies
$$\begin{cases}
\Delta^{2}v=|x|^{\alpha}v^{p},  \\
-\Delta v\geq 0
\end{cases} \quad \mbox{in} ~ \mathbb{R}^{n}\setminus\{0\}.
$$
By \eqref{asy01}, we have $v(e_{1})=0$. The maximum principle implies that $v\equiv0$ in $\mathbb{R}^{n} \setminus \{0\}$. On the other hand, since $\widetilde{E}(r;u)$ is invariant under the scaling, we get
$$\lim_{i\rightarrow\infty}\widetilde{E}(r_{i};u)=\lim_{i\rightarrow\infty}\widetilde{E}(1;v_{i})=\widetilde{E}(1;v)=0.$$
By the monotonicity in Proposition \ref{Mon01}, we obtain
$$\lim_{r\rightarrow0^{+}}\widetilde{E}(r;u)=0.$$

{\it Step 2}. {\it Let $v \in C^4(\mathbb{R}^n \setminus \{0\})$ be a nonnegative solution of $\Delta^{2}v=|x|^{\alpha}v^{p}$ in $\mathbb{R}^{n}\setminus\{0\}$. If $\widetilde{E}(r;v)=0$ for $r\in (0, +\infty)$, then $v\equiv0$ in $\mathbb{R}^{n}\setminus \{0\}$.}

By the definition of $\widetilde{E}(r;v)$ in \eqref{1u067}, we have $E(t;w)=0$ for $t\in (-\infty,\infty)$, where $w(t,\theta)=|x|^{\frac{4+\alpha}{p-1}}v(|x|,\theta)$ with $t=\ln |x|$. This implies that $\frac{d}{dt}E(t;w)=0$.  It follows from Proposition \ref{Mon01} and Lemma \ref{SignA} that $\partial_{t}w=0$ due to $p \neq \frac{n+4+2\alpha}{n-4}$. Therefore \eqref{eq3} can be reduced to
$$A_{0}w+\Delta^{2}_{\theta}w+A_{4}\Delta_{\theta}w=w^{p}.$$
Integrating on $\mathbb{S}^{n-1}$,  we get
$$\int_{\mathbb{S}^{n-1}}A_{0}w^{2}+|\Delta_{\theta}w|^{2}-A_{4}|\nabla_{\theta}w|^{2}dS=\int_{\mathbb{S}^{n-1}}w^{p+1}dS.$$
On the other hand,  $E(t;w)=0$ implies
$$\frac{1}{2}\int_{\mathbb{S}^{n-1}}|\Delta_{\theta}w|^{2}-A_{4}|\nabla_{\theta}w|^{2}dS+\frac{A_{0}}{2}\int_{\mathbb{S}^{n-1}}w^{2}dS-\frac{1}{p+1}\int_{\mathbb{S}^{n-1}}w^{p+1}dS=0.$$
Therefore, we obtain
$$(1-\frac{2}{p+1})\int_{\mathbb{S}^{n-1}}w^{p+1}dS=0.$$
This means that $w\equiv0$ on $\mathbb{S}^{n-1}$ and hence $v\equiv0$ in $\mathbb{R}^n \setminus \{0\}$.

{\it Step 3}. Define $u^{\lambda}(x)=\lambda^{\frac{4+\alpha}{p-1}}u(\lambda x)$ with $\lambda>0$ small. It follows from Theorem \ref{thm1.1} and Proposition \ref{Lem4.1} that $u^{\lambda}$ is locally uniformly bounded away from the origin. Hence, there is a subsequence ${\{\lambda_{i}}\}$ such that $\lambda_i\rightarrow0$ and $u^{\lambda_{i}}\rightarrow u^{0}$ in $C_{loc}^4(\mathbb{R}^n \setminus \{0\})$ as $i\to \infty$.   Moreover, $u^0$ is nonnegative and satisfies
$$\Delta^{2}u^{0}=|x|^{\alpha}(u^{0})^{p},  \quad x\in\mathbb{R}^{n}\setminus\{0\}.$$
The scaling invariance of $\widetilde{E}(r;u)$ leads to
$$\widetilde{E}(r;u^{0})=\lim_{i\rightarrow\infty}\widetilde{E}(r;u^{\lambda_{i}})=\lim_{i\rightarrow\infty}\widetilde{E}(r\lambda_{i};u)=\lim_{r\rightarrow0}\widetilde{E}(r;u)=0 \quad \mbox{for all} ~ 0 < r <\infty.
$$
By the conclusion in Step 2, we obtain $u^{0} \equiv 0$. In particular,
$$\lim_{\lambda\rightarrow0}\lambda^{\frac{4+\alpha}{p-1}}u(\lambda x)=0$$
 uniformly for $x\in \partial B_{1}$, which immediately implies
 $$\lim_{|x|\rightarrow0}|x|^{\frac{4+\alpha}{p-1}}u(x)=0.$$

\textbf{Case 2:} For $p=\frac{n+4+2\alpha}{n-4}$. Assume by contradiction that $\limsup_{|x|\rightarrow0}|x|^{\frac{4+\alpha}{p-1}}u(x)=C>0$. Then there exists a sequence of points $\{x_i\}$ such that $s_i:=|x_i| \to 0$ and $u(x_i) \to \infty$ as $i \to \infty$. By the Harnack inequality in Proposition \ref{Lem4.1}, we have
$$
\inf_{\partial B_{s_i}} u \geq C^{-1} u(x_i)  \to \infty.
$$
It follows from Theorem \ref{thm1.2} that $-\Delta u \geq 0$ in $B_\tau \setminus \{0\}$ for some $\tau>0$. Using the maximum principle, we get
$$
\inf_{B_{s_i} \setminus B_{s_{i+1}}} u  \geq \min \left\{ \inf_{\partial B_{s_i}} u,  \inf_{\partial B_{s_{i+1}}} u \right\} \to \infty,
$$
and hence
$$\liminf_{|x|\rightarrow0}u(x)=\infty.$$
Moreover, from the assumptions and Proposition \ref{Lem4.1}, we can find $r_{i}\rightarrow0$ such that $r_{i}^{\frac{4+\alpha}{p-1}}\overline{u}(r_{i})\rightarrow0$ as $i\to \infty$,  and $r_{i}$ is a local minimum point of $r^{\frac{4+\alpha}{p-1}}\overline{u}(r)$ with $\overline{u}(r)=\frac{1}{|\partial B_{r}|}\int_{\partial B_{r}}udS$. As in the proof of Proposition \ref{Lem4.1}, we may suppose that $u$ satisfies the integral equation
$$
u(x)=\int_{B_{2}}\frac{|y|^{\alpha}u^{p}(y)}{|x-y|^{n-4}}dy+h(x)   \quad \mbox{for} ~x \in B_{2} \setminus \{0\},
$$
where $u \in C^4(B_2 \setminus \{0\})$, $|\cdot|^\alpha u^{p} \in L^1(B_2)$ and $h \in C^\infty(B_2)$ is a positive function. Now we use some arguments from \cite[Proposition 4.5]{JX2021}. Define
$$\varphi_{i}(y)=\frac{u(r_{i}y)}{u(r_{i}e_{1})},$$
where $e_{1}=(1,0,\cdots,0)\in\mathbb{R}^{n}$. Then $\varphi_{i}$ satisfies
\begin{align}\label{vint0}
\varphi_{i}(y)=\int_{B_{2/r_{i} }}\frac{\left(r_{i}^{\frac{4+\alpha}{p-1}}u(r_{i}e_{1})\right)^{p-1}|z|^{\alpha}\varphi_{i}^{p}(z)}{|y-z|^{n-4}}dz+h_{i}(y), \quad y \in B_{2/r_{i}} \setminus \{0\},
\end{align}
where $h_{i}(y)=u(r_{i}e_{1})^{-1}h(r_{i}y)\rightarrow0$ in $C^{4}_{loc}(\mathbb{R}^{n})$ thanks to  $u(r_{i}e_{1})\rightarrow\infty$  as $i \to \infty$.

It follows from the Harnack inequality in Proposition \ref{Lem4.1} that $r_{i}^{\frac{4+\alpha}{p-1}}u(r_{i}e_{1})\rightarrow0$ and $\varphi_{i}$ is locally uniformly bounded in $B_{2/{r_{i}}}\setminus\{0\}$. Hence
\begin{align}\label{loc0}
\left(r_{i}^{\frac{4+\alpha}{p-1}}u(r_{i}e_{1})\right)^{p-1}|z|^{\alpha}\varphi_{i}^{p}(z)\rightarrow0 \quad \mbox{in} ~ C_{loc}(\mathbb{R}^n \setminus \{0\}).
\end{align}
For any $t>1$, $0<|y|<t$ and $0<\varepsilon<\frac{|y|}{100}$, we have, after passing to a subsequence,
\begin{align*}
& \lim_{i\rightarrow\infty}\int_{B_{t}}\frac{\left(r_{i}^{\frac{4+\alpha}{p-1}}u(r_{i}e_{1})\right)^{p-1}|z|^{\alpha}\varphi_{i}^{p}(z)}{|y-z|^{n-4}}dz \\
&~~ =\lim_{i\rightarrow\infty}\int_{B_{\varepsilon}}\frac{\left(r_{i}^{\frac{4+\alpha}{p-1}}u(r_{i}e_{1})\right)^{p-1}|z|^{\alpha}\varphi_{i}^{p}(z)}{|y-z|^{n-4}}dz\\
&~~ =|y|^{4-n}(1+O(\varepsilon))\lim_{i\rightarrow\infty}\int_{B_{\varepsilon}}\left(r_{i}^{\frac{4+\alpha}{p-1}}u(r_{i}e_{1})\right)^{p-1}|z|^{\alpha}\varphi_{i}^{p}(z)dz.
\end{align*}
Sending $\varepsilon\rightarrow0$, we obtain
$$\lim_{i\rightarrow\infty}\int_{B_{t}}\frac{\left(r_{i}^{\frac{4+\alpha}{p-1}}u(r_{i}e_{1})\right)^{p-1}|z|^{\alpha}\varphi_{i}^{p}(z)}{|y-z|^{n-4}}dz=\frac{a}{|y|^{n-4}} $$
for some constant $a\geq0$. On the other hand, by the equation \eqref{vint0} we have
$$\lim_{i\rightarrow\infty}\int_{B_{2/{r_{i}}}\setminus B_{t}}\frac{\left(r_{i}^{\frac{4+\alpha}{p-1}}u(r_{i}e_{1})\right)^{p-1}|z|^{\alpha}\varphi_{i}^{p}(z)}{|y-z|^{n-4}}dz\rightarrow f(y) \geq 0 \quad \mbox{in} ~ C_{loc}^4(B_t)$$
for some  function $f\in C^{4}(B_{t})$.  We claim that $f$ is a constant function in $B_t$. Indeed, for any fixed large $R>0$ and $y\in B_{t}$, it follows from \eqref{loc0} that
$$\lim_{i\rightarrow\infty}\int_{t\leq|z|\leq R}\frac{\left(r_{i}^{\frac{4+\alpha}{p-1}}u(r_{i}e_{1})\right)^{p-1}|z|^{\alpha}\varphi_{i}^{p}(z)}{|y-z|^{n-4}}dz\rightarrow0  \quad \mbox{as}~ i\rightarrow\infty.$$
For any $y',y''\in B_{t}$, we have
\begin{align*}
\int_{B_{2/{r_{i}}}\setminus B_{R}} & \frac{\left(r_{i}^{\frac{4+\alpha}{p-1}}u(r_{i}e_{1})\right)^{p-1}|z|^{\alpha}\varphi_{i}^{p}(z)}{|y'-z|^{n-4}}dz \\
& \leq \left(\frac{R+t}{R-t}\right)^{n-4}\int_{B_{2/{r_{i}}}\setminus B_{R}}\frac{\left(r_{i}^{\frac{4+\alpha}{p-1}}u(r_{i}e_{1})\right)^{p-1}|z|^{\alpha}\varphi_{i}^{p}(z)}{|y''-z|^{n-4}}dz.
\end{align*}
Therefore,
$$f(y')\leq\left(\frac{R+t}{R-t}\right)^{n-4}f(y'').$$
By sending $R\rightarrow\infty$, we get
$$f(y) = f(0) \quad \mbox{for all}~ y\in B_{t}.$$
Note that $\varphi_i$ is locally uniformly bounded in $C^{5}(B_{2/{r_i}}\setminus{\{0}\})$. Thus, up to a subsequence, we have
$$\varphi_{i}(y)\rightarrow \varphi(y):=\frac{a}{|y|^{n-4}}+f(0) \quad \text{in}~ C_{loc}^4(\mathbb{R}^n\setminus{\{0}\})~ \text{as} ~ i\to  \infty.$$
Since $\varphi_{i}(e_{1})=1$ and $\frac{d}{dr}(r^{\frac{4+\alpha}{p-1}}\overline{\varphi}_{i}(r)) \big|_{r=1}=0$, we have $\varphi(e_{1})=1$ and
$$
\frac{d}{dr}(r^{\frac{4+\alpha}{p-1}}\overline{\varphi}(r)) \Big|_{r=1}=0.
$$
These imply that
$$a=\frac{4+\alpha}{(p-1)(n-4)}=\frac{1}{2} \quad\mbox{and}\quad f(0)=1- a =\frac{1}{2}.$$

Since $|\nabla^{k}\varphi_{i}|\leq C$ near $\partial B_{1}$ and $r_{i}^{\frac{4+\alpha}{p-1}}u(r_{i}e_{1})=o(1)$, we obtain
$$
|\nabla^{k}u(x)|\leq Cr_{i}^{-k}u(r_{i}e_{1})=o(1)r_{i}^{-\frac{4+\alpha}{p-1}-k} \quad\mbox{for all }|x|=r_{i}, ~ k=0,1,2,3.
$$
Set $w(t,\theta)=|x|^{\frac{4+\alpha}{p-1}}u(|x|,\theta)$ with  $t=\ln|x|$ and $\theta=\frac{x}{|x|}$. Then the function $w$ satisfies \eqref{eq3} and
$$
w, w_{t}, w_{tt}, w_{ttt}, |\nabla_{\theta}w|, \Delta_{\theta}w=o(1) \quad \mbox{for} ~ t_i= \ln r_i.
$$
Let $E(t;w)$ be defined as in \eqref{Monf}. Then
$$\lim_{i\rightarrow \infty}E(t_i;w)=0.$$
Using Proposition \ref{Mon01}, we see that $\frac{d}{dt}E(t;w)=0$ due to $A_1=A_3=0$ in the case of $p=\frac{n+4+2\alpha}{n-4}$. Hence
$$E(t;w)=0 \quad\mbox{for all}~  t\in(-\infty, \infty).$$
Set $\widetilde{\varphi}_{i}(t,\theta):=|x|^{\frac{4+\alpha}{p-1}}\varphi_{i}(|x|,\theta)$ with $t=\ln |x|$. Then we have
$$\widetilde{\varphi}_{i}(t,\theta)\rightarrow\widetilde{\varphi}(t,\theta):=\frac{1}{2} \left[ e^{(\frac{4+\alpha}{p-1}+4-n)t}+  e^{\frac{4+\alpha}{p-1}t} \right]$$
and
$$r_{i}^{\frac{4+\alpha}{p-1}}u(r_{i}e_{1})\widetilde{\varphi}_{i}(t,\theta)=w(t+\ln r_{i},\theta).$$
Therefore,
$$0=E(t;w)=E(t+\ln r_{i};w)=E(t;w(t+\ln r_{i},\theta))=E(t;r_{i}^{\frac{4+\alpha}{p-1}}u(r_{i}e_{1})\widetilde{\varphi}_{i}(t,\theta)),$$
that is,
\begin{align*}
0= & \int_{\mathbb{S}^{n-1}}(\widetilde{\varphi}_{i}(t,\theta))_{ttt}(\widetilde{\varphi}_{i}(t,\theta))_{t}dS-\frac{1}{2}\int_{\mathbb{S}^{n-1}}(1-A_{2})[(\widetilde{\varphi}_{i}(t,\theta))_{t}]^{2}dS +\frac{A_{0}}{2}\int_{\mathbb{S}^{n-1}}\widetilde{\varphi}_{i}^{2}(t,\theta)dS\\
& + \int_{\mathbb{S}^{n-1}}A_{3}(\widetilde{\varphi}_{i}(t,\theta))_{tt}(\widetilde{\varphi}_{i}(t,\theta))_{t}dS
+\frac{1}{2}\int_{\mathbb{S}^{n-1}}|\Delta_{\theta}\widetilde{\varphi}_{i}(t,\theta)|^{2}-A_{4}|\nabla_{\theta}\widetilde{\varphi}_{i}(t,\theta)|^{2}dS\\
& -\frac{1}{p+1}\int_{\mathbb{S}^{n-1}}\left(r_{i}^{\frac{4+\alpha}{p-1}}u(r_{i}e_{1})\right)^{p-1}(\widetilde{\varphi}_{i}(t,\theta))^{p+1}dS-\int_{\mathbb{S}^{n-1}}|\partial_{t}\nabla_{\theta}\widetilde{\varphi}_{i}(t,\theta)|^{2}dS.
\end{align*}
Letting $i\rightarrow\infty$, we get
\begin{align*}
0=&\int_{\mathbb{S}^{n-1}}\widetilde{\varphi}_{ttt}\widetilde{\varphi}_{t}dS-\frac{1}{2}\int_{\mathbb{S}^{n-1}}(1-A_{2})[\widetilde{\varphi}_{t}]^{2}dS+\int_{\mathbb{S}^{n-1}}A_{3}\widetilde{\varphi}_{tt}\widetilde{\varphi}_{t}dS+\frac{A_{0}}{2}\int_{\mathbb{S}^{n-1}}\widetilde{\varphi}^{2}dS.
\end{align*}
Taking $t=0$, we have
$$\widetilde{\varphi}_{t}|_{t=0}=\frac{1}{2} \left[ \Big( \frac{4+\alpha}{p-1}+4-n \Big)+ \frac{4+\alpha}{p-1} \right]=0$$
and
$$\widetilde{\varphi}_{ttt}|_{t=0}=\frac{1}{2} \left[ \Big( \frac{4+\alpha}{p-1}+4-n \Big)^{3}+ \Big( \frac{4+\alpha}{p-1} \Big)^{3} \right]=0.$$
Hence, we get
\begin{align*}
0 & = \left[ \int_{\mathbb{S}^{n-1}}\widetilde{\varphi}_{ttt}\widetilde{\varphi}_{t}dS - \frac{1}{2}\int_{\mathbb{S}^{n-1}}(1-A_{2})(\widetilde{\varphi}_{t})^{2}dS+\int_{\mathbb{S}^{n-1}}A_{3}\widetilde{\varphi}_{tt}\widetilde{\varphi}_{t}dS+\frac{A_{0}}{2}\int_{\mathbb{S}^{n-1}}\widetilde{\varphi}^{2}dS]\right] \Bigg|_{t=0}\\
&=\frac{A_{0}}{2}|\mathbb{S}^{n-1}|>0.
\end{align*}
This is a contradiction. The proof of Proposition \ref{lim} is completed.
\end{proof}

\medskip

To complete the proof of Theorem \ref{thm1.3}, we also need the following sufficient condition for removability of isolated singularities.  This will also be used in the proof of Theorem \ref{mth1}.

\begin{pro}\label{Remov}
Let $m=2$,  $-4<\alpha<4$ and $\frac{n+\alpha}{n-4}<p<\frac{n+4}{n-4}$. Suppose that $u\in C^{4}(B_1\setminus{\{0}\})$ is a positive solution to \eqref{eq1}.  If
\begin{align}\label{step1.1}
\lim_{|x|\rightarrow0}|x|^{\frac{4+\alpha}{p-1}}u(x)=0,
\end{align}
then the singularity at $x=0$ is removable, i.e., $u$ can be extended as a continuous function near the origin $0$.
\end{pro}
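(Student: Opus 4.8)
\noindent\textbf{Proof proposal for Proposition \ref{Remov}.}
Put $\beta:=\frac{4+\alpha}{p-1}$; since $p>\frac{n+\alpha}{n-4}$ we have $0<\beta<n-4$. The plan is to convert the decay \eqref{step1.1} into a smallness estimate for the potential of an integral equation satisfied by $u$, and then to lift the regularity of $u$ by Proposition \ref{pro7}. Fix $\varepsilon>0$ (small, to be pinned down later). By Lemma \ref{Lem3.2} and the argument of \cite[Theorem 1.1]{JX2021} (cf.\ the derivation of \eqref{IE02} in the proof of Proposition \ref{Lem4.1}), together with a rescaling, I may assume
$$
u(x)=c_n\int_{B_2}\frac{|y|^{\alpha}u^{p}(y)}{|x-y|^{n-4}}\,dy+h(x),\qquad x\in B_2\setminus\{0\},
$$
with $c_n>0$ and $h\in C^\infty(B_2)$ bounded; since \eqref{step1.1} is scale invariant there is $\rho\in(0,2]$ with $0\le|x|^{\alpha}u^{p-1}(x)\le\varepsilon^{p-1}|x|^{-4}$ on $B_\rho$. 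Because $u$ is bounded away from the origin, the portion of the first integral over $B_2\setminus B_\rho$ is a bounded continuous function on $B_2$, which I absorb into $h$. Writing $|y|^{\alpha}u^{p}=V\cdot u$ with $V:=|y|^{\alpha}u^{p-1}$, this yields $u=\mathcal Tu+g$ on $B_2\setminus\{0\}$, where $g$ is bounded and continuous on $B_2$ and
$$
\mathcal T\phi(x):=c_n\int_{B_\rho}\frac{V(y)\phi(y)}{|x-y|^{n-4}}\,dy,\qquad\text{with}\quad 0\le V(y)\le\varepsilon^{p-1}|y|^{-4}\ \text{on}\ B_\rho .
$$

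The key point is that $\mathcal T$ is a uniform contraction on $L^{q}(B_2)$ for \emph{every} $q>\frac{n}{n-4}$ once $\varepsilon$ is small. Indeed, extending $\phi$ by zero and bounding $V$ by $\varepsilon^{p-1}|y|^{-4}$, the doubly weighted Hardy--Littlewood--Sobolev inequality (Proposition \ref{pro5}, applied to the Stein--Weiss kernel $|y|^{-4}|x-y|^{4-n}$: its balance condition forces the source and target exponents to agree, and its admissibility condition becomes exactly $q>\frac{n}{n-4}$) gives
$$
\|\mathcal T\phi\|_{L^{q}(B_2)}\le c_n\,\varepsilon^{p-1}\,C(n,q)\,\|\phi\|_{L^{q}(B_2)}
$$
with $C(n,q)$ independent of $\rho$ and $\phi$. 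Now fix $q_1\in\bigl(\frac{n}{n-4},\frac{n}{\beta}\bigr)$ — nonempty since $\beta<n-4$ — so that $u\in L^{q_1}(B_2)$ (as $u\le\varepsilon|x|^{-\beta}$ near $0$ and $q_1\beta<n$), and a second exponent $q_2>\max\{q_1,q^\ast\}$, where $q^\ast=q^\ast(n,p,\alpha)<\infty$ is chosen so large that $u\in L^{q_2}(B_2)$ forces $|y|^{\alpha}u^{p}\in L^{t}(B_2)$ for some $t>\frac n4$ (one can take $q^\ast=\frac{np}{4}$ when $\alpha\ge0$ and $q^\ast=\frac{np}{4+\alpha}$ when $-4<\alpha<0$, using $|x|^{\alpha}\in L^{\kappa}(B_2)$ for some $\kappa>\frac n4$, valid because $|\alpha|<4$). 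Choosing $\varepsilon$ so small that $c_n\varepsilon^{p-1}C(n,q_i)\le\frac12$ for $i=1,2$, the map $\mathcal T$ is a contraction on both $X:=L^{q_1}(B_2)$ and $Y:=L^{q_2}(B_2)$; since $g\in X\cap Y$ and $u=\mathcal Tu+g$ as an identity in $X$, Proposition \ref{pro7} gives $u\in X\cap Y=L^{q_2}(B_2)$.

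Finally, with $u\in L^{q_2}(B_2)$ the choice of $q^\ast$ guarantees $|y|^{\alpha}u^{p}\in L^{t}(B_2)$ for some $t>\frac n4$; hence the Riesz potential $x\mapsto c_n\int_{B_2}|x-y|^{4-n}|y|^{\alpha}u^{p}(y)\,dy$ is Hölder continuous on all of $B_2$, the origin included, and adding the continuous $h$ shows that the right-hand side of the displayed integral equation extends continuously across $0$; therefore so does $u$, i.e.\ $x=0$ is removable. I expect the genuine difficulty to be the borderline nature of the problem: the potential $V=|x|^{\alpha}u^{p-1}$ need not lie in $L^{n/4}$ in any neighbourhood of $0$, so a naive regularity lifting with an $L^{n/4}$ potential is unavailable. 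What makes it work is that the pointwise bound $V\le\varepsilon^{p-1}|x|^{-4}$ supplied by \eqref{step1.1} turns $\mathcal T$ into a bona fide Stein--Weiss operator whose operator norm carries the small factor $\varepsilon^{p-1}$; this smallness is precisely what upgrades boundedness of $\mathcal T$ to a contraction and makes the weighted inequality of Proposition \ref{pro5} — rather than the unweighted Hardy--Littlewood--Sobolev inequality — the instrument to use.
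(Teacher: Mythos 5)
Your proof is correct, and it takes a genuinely different route from the paper's. The paper first proves $\int_{B_{1/2}}u|x|^{-n+(4+\alpha)/(p-1)}\,dx<\infty$ by a test-function argument with $\psi=|x|^{-\gamma}$, converts this into the integrability $u^{p-1}\in L^{n/(4+\alpha)}$ (for $\alpha\le0$) or $|x|^\alpha u^{p-1}\in L^{n/4}$ (for $\alpha\ge0$), splits the potential into a large part $a_L$ and a bounded part $a_M$, and obtains a contraction from the smallness of $\|a_L\|$ for large $L$; it then iterates the regularity lifting finitely many times, with a case split at $\alpha=0$. You avoid all of this: the decay hypothesis \eqref{step1.1} gives the pointwise bound $V(x)=|x|^\alpha u^{p-1}(x)\le\varepsilon^{p-1}|x|^{-4}$ on a small ball, which places the operator $\phi\mapsto\int V(y)|x-y|^{4-n}\phi(y)\,dy$ directly under the Stein--Weiss inequality with a norm carrying the factor $\varepsilon^{p-1}$; smallness of $\varepsilon$, not truncation of the potential, supplies the contraction, and since the contraction holds on $L^q$ for every fixed $q>\frac{n}{n-4}$ with $\varepsilon$ chosen afterwards, a \emph{single} application of Proposition \ref{pro7} from $L^{q_1}$ to a pre-selected large $L^{q_2}$ replaces the paper's iteration. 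This also unifies the two sign-cases for $\alpha$, because the single weight $|y|^{-4}$ absorbs both $|y|^\alpha$ and the decay of $u^{p-1}$. What the paper's approach buys is independence from the choice of a small ball (the truncation $a_L$ gives smallness without shrinking the domain); what yours buys is a shorter argument that isolates where the decay hypothesis enters and dispenses with the test-function step, the $a_L/a_M$ decomposition, the bootstrap loop, and the $\alpha\lessgtr0$ dichotomy. A small caveat: the passage from Lemma \ref{Lem3.2} to the displayed integral equation on $B_2$ (absorbing the difference $G_m-C|x-y|^{4-n}$ and the boundary integrals into $h$) is invoked rather than carried out — that is the same step the paper references from \cite{JX2021}, and it is fine to cite it, but your $h$ only needs to be smooth and bounded, not bounded below, so you do not need Theorem \ref{thm1.2} as the paper does in Proposition \ref{Lem4.1}.
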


\begin{proof}
First, we claim that there holds
\begin{align}\label{Remty01}
\int_{B_{1/2}}u(x) |x|^{-n + \frac{4+\alpha}{p-1}} dx < \infty.
\end{align}
Set $\psi(x)=|x|^{-\gamma}$ with $\gamma=n-4-\frac{4+\alpha}{p-1} >0$. Then
$$\Delta^{2}\psi(r)=\gamma(\gamma+2)(\gamma-n+2)(\gamma-n+4)r^{-\gamma-4}.$$
Denote
$$\Lambda:=\gamma(\gamma+2)(\gamma-n+2)(\gamma-n+4).$$
Then we have $\Lambda>0$ and
\begin{align}
\frac{\Delta^{2}\psi}{\psi}=\Lambda|x|^{-4} \quad\mbox{in}~ \mathbb{R}^{n}\setminus\{0\}.\nonumber
\end{align}
For any small $\varepsilon>0$, we define a smooth cut-off function as follows:
\begin{eqnarray*}
\zeta_{\varepsilon}(x)=
\begin{cases}
1, &\varepsilon\leq|x|\leq\frac{1}{2},\cr
0, &|x|\leq\frac{\varepsilon}{2} ~ \text{or} ~ |x|\geq\frac{3}{4},
\end{cases}
\end{eqnarray*}
\begin{align}\label{step2.2}
|\nabla^{k}\zeta_{\varepsilon}(x)|\leq C\varepsilon^{-k} \quad\mbox{for}~ \frac{\varepsilon}{2}\leq|x|\leq\varepsilon,
\end{align}
and
\begin{align}\label{step2.3}
|\nabla^{k}\zeta_{\varepsilon}(x)|\leq C\left(\frac{1}{4}\right)^{-k} \quad\mbox{for}~ \frac{1}{2}\leq|x|\leq\frac{3}{4},
\end{align}
where $C>0$ is independent of $\varepsilon$. Taking $\zeta_{\varepsilon}(x)\psi(x)$ as a test function in \eqref{eq1}, we have
\begin{align}
\int_{B_{1}}u\zeta_{\varepsilon}\psi\left(\frac{\Delta^{2}\psi}{\psi}-|x|^{\alpha}u^{p-1}\right)dx=-\int_{B_{1}}uH(\zeta_{\varepsilon},\psi)dx,\nonumber
\end{align}
where
\begin{align}
H(\zeta_{\varepsilon},\psi) = & 4\nabla\zeta_{\varepsilon}\cdot\nabla\Delta\psi+2\Delta\zeta_{\varepsilon}\Delta\psi+4\sum_{i,j=1}^{n}(\zeta_{\varepsilon})_{x_{i}x_{j}}\psi_{x_{i}x_{j}}\nonumber\\
& +4\nabla\Delta\zeta_{\varepsilon}\cdot\nabla\psi+\psi\Delta^{2}\zeta_{\varepsilon}.\nonumber
\end{align}
From Theorem \ref{thm1.1}, \eqref{step2.2} and \eqref{step2.3}, we get that
\begin{align}
\left|\int_{B_{1}}u H(\zeta_{\varepsilon},\psi)dx\right|&\leq \left|\int_{\{\frac{1}{2}\leq|x|\leq\frac{3}{4}\}}uH(\zeta_{\varepsilon},\psi)dx\right|+\left|\int_{\{\frac{\varepsilon}{2}\leq|x|\leq\varepsilon\}}uH(\zeta_{\varepsilon},\psi)dx\right|\nonumber\\
&\leq C_{1}+C_{2}\varepsilon^{-\gamma-4}\varepsilon^{n}\varepsilon^{-\frac{4+\alpha}{p-1}}\leq C<\infty, \nonumber
\end{align}
where $C>0$ is independent of $\varepsilon$.

On the other hand, the assumption \eqref{step1.1} implies that
$$u^{p-1}(x)=o(|x|^{-(4+\alpha)}) \quad\mbox{as}~ |x|\rightarrow0.$$
Therefore, we have
$$\int_{B_{1}}u\zeta_{\varepsilon}|x|^{-\gamma-4}dx \leq C <\infty.$$
Sending $\varepsilon\rightarrow0$, the claim is proved.

Now, we consider the two cases $-4< \alpha \leq 0$ and $0\leq \alpha <4$ separately.

{\bf Case 1:} $-4< \alpha \leq 0$. Note that $\frac{(p-1)n}{4+\alpha} >1$ due to $p>\frac{n+\alpha}{n-4}$ and $\alpha > -4$.  By Theorem \ref{thm1.1} and \eqref{Remty01} we have
\begin{equation}\label{Remty02}
\aligned
\int_{B_{1/2}} u^{\frac{(p-1)n}{4 + \alpha}} dx & \leq C \int_{B_{1/2}} u |x|^{-\frac{4+\alpha}{p-1} \big[ \frac{(p-1)n}{4+\alpha} -1 \big] } dx \\
& = C \int_{B_{1/2}}u |x|^{-n + \frac{4+\alpha}{p-1}} dx < \infty.
\endaligned
\end{equation}
Define the function $v(x)$ as follows
\begin{align}\label{step3.1}
v(x):=-u(x)+\int_{B_{1/2}}G(x,y) |y|^{\alpha}u^{p}(y)dy, \quad x\in B_{1/2},
\end{align}
where $G(x,y)$ is the Green's function of $\Delta^{2}$ in $B_{1/2}$ with homogenous Dirichlet boundary conditions. Then there exists a positive constant $C_{n}$ such that
$$0<G(x,y)\leq \Gamma(x,y):=C_{n}|x-y|^{4-n} \quad\mbox{for}~  x,y\in B_{1/2}, ~~x\neq y.$$
A similar argument as in \cite[Lemma 3.1]{Y2020} shows that $v$ satisfies
$$\Delta^{2}v=0  \quad \mbox{in}~ B_{1/2}$$
in the distributional sense. According to the interior regularity theory, we know that $v\in C(B_{1/2})$.

Set $a(x):=u^{p-1}(x)$. Then by \eqref{Remty02} we have $a \in L^{\frac{n}{4+\alpha}} (B_{1/2})$. For any large number $L>0$, let
\begin{eqnarray*}
a_{L}(x)=
\begin{cases}
a(x) &\mbox{if }|a(x)|\geq L ,\cr
0 &\mbox{otherwise},
\end{cases}
\end{eqnarray*}
and
$$a_{M}(x)=a(x)-a_{L}(x).$$
Then \eqref{step3.1} can be rewritten as
\begin{align}
u(x)&=\int_{B_{1/2}}G(x,y) |y|^{\alpha} u^{p}(y) dy - v(x)\nonumber\\
&=\int_{B_{1/4}}G(x,y) |y|^{\alpha} a_{L}(y) u(y)dy+\int_{B_{1/4}}G(x,y)|y|^{\alpha} a_{M}(y) u(y)dy\nonumber\\
& ~~~~~ +\int_{B_{1/2}\setminus B_{1/4}}G(x,y)|y|^{\alpha}u^{p}dy-v(x)\nonumber\\
&=: T_{L}u(x)+F_{L} u(x) + H(x) - v(x), \nonumber
\end{align}
where $T_L$ is a linear operator defined as
$$
T_{L}:w \rightarrow \int_{B_{1/4}} G(x,y) |y|^{\alpha} a_{L}(y) w(y) dy.
$$
Notice that
$$|H(x)|\leq C\int_{\{ \frac{1}{4}\leq|y|\leq\frac{1}{2}\} }G(x,y)dy\leq C\int_{B_{1}}|y|^{4-n}dy\leq C  \quad\mbox{for all}~ x\in B_{1/4}.$$
Hence $v,~H\in L^{\infty}(B_{1/4})$.

We shall prove that $x=0$ is a removable singularity. By Proposition \ref{pro7} it is sufficient to show that $T_{L}$ is a contracting operator from $L^{q}(B_{1/4})$ to $L^{q}(B_{1/4})$ for $L$ large and $F_{L}\in L^{q}(B_{1/4})$ for any $q \in (\frac{n}{n-4},  \infty)$.

For any $q \in (\frac{n}{n-4},  \infty)$, there exists $ r \in (\frac{n}{n+\alpha}, \frac{n}{4+\alpha}) $ such that (noticing that $\frac{n}{n+\alpha} \geq 1$ due to $-4< \alpha \leq 0$)
\begin{equation}\label{relqa}
\frac{1}{q}=\frac{1}{r}-\frac{4 + \alpha}{n}.
\end{equation}
By the doubly weighted Hardy-Littlewood-Sobolev inequality (see Proposition \ref{pro5}) and H\"{o}lder inequality, we obtain
\begin{align}
\| T_{L}w \|_{L^{q}(B_{1/4})} & \leq C \left\| \int_{B_{1/4}} |\cdot - y|^{4-n} |y|^{\alpha} a_{L}(y) w(y) dy \right\|_{L^{q}(B_{1/4})} \nonumber\\
& \leq C \|a_L w\|_{L^r(B_{1/4})} \nonumber\\
& \leq C\| a_{L} \|_{L^{\frac{n}{4 + \alpha}}(B_{1/4})} \| w \|_{L^{q}(B_{1/4})},\nonumber
\end{align}
where we used the fact $0\leq -\alpha < \frac{n}{r'} = n(1 - \frac{1}{r})$ because of $q > \frac{n}{n-4}$. Since $a \in L^{\frac{n}{4+\alpha}} (B_{1/2})$, we have for $L>0$ large enough that
$$
C\| a_{L} \|_{L^{\frac{n}{4 + \alpha}}(B_{1/4})} \leq \frac{1}{2}.
$$
Therefore, $T_{L}: L^{q}(B_{1/4})\rightarrow L^{q}(B_{1/4})$ is a contracting operator for large $L$.

On the other hand, by \eqref{Remty02} we know
$$
u\in L^{r}(B_{1/4}) \quad\mbox{for any}~ 1< r \leq \frac{(p-1)n}{4 + \alpha}.
$$
Note that $\frac{(p-1)n}{4 + \alpha} > \frac{n}{n+\alpha}$ due to $p > \frac{n+\alpha}{n-4}$ and $\alpha >-4$. Since $a_{M}$ is a bounded function, by the doubly weighted Hardy-Littlewood-Sobolev inequality we have
$$
\|  F_{L} u \|_{L^{q}(B_{1/4})}\leq \| a_{M}u \|_{L^{r}(B_{1/4})}\leq C \| u\|_{L^{r}(B_{1/4})},
$$
where $r$, $q$ satisfy \eqref{relqa} and $\frac{n}{n+\alpha} <  r \leq \frac{(p-1)n}{4 + \alpha}$.
It is easy to see that
$$
q=\frac{(p-1)n}{(4+\alpha)(2-p)} \quad\mbox{if}~ r=\frac{(p-1)n}{4 + \alpha}.
$$
This implies that $F_{L} u \in L^{q}(B_{1/4})$ for
$$
\begin{cases}
1<q <\infty  \quad & \mbox{if}~ p\geq2,\\
1<q \leq\frac{(p-1)n}{(4+\alpha)(2-p)}  \quad & \mbox{if}~ p<2.
\end{cases}
$$
Using the Regularity Lifting Theorem in Section \ref{sec2}, we obtain $u\in L^{q}(B_{1/4})$ for
$$
\begin{cases}
1<q<\infty  \quad & \mbox{if}~ p \geq 2,\\
1<q\leq\frac{(p-1)n}{(4+\alpha)(2-p)}  \quad & \mbox{if}~ p<2.
\end{cases}
$$
If $u\in L^{\frac{(p-1)n}{(4+\alpha)(2-p)}}(B_{1/4})$ (for $p<2$), then we have $F_{L} u \in L^{q}(B_{1/4})$ for
$$
\begin{cases}
1< q < \infty  \quad & \mbox{if}~p\geq\frac{3}{2}, \\
1< q \leq \frac{(p-1)n}{(4+\alpha)(3-2p)}  \quad & \mbox{if}~ p<\frac{3}{2}.
\end{cases}
$$
Using the Regularity Lifting Theorem again, we obtain $u\in L^{q}(B_{1/4})$ for
$$
\begin{cases}
1< q < \infty  \quad & \mbox{if}~p\geq\frac{3}{2}, \\
1< q \leq \frac{(p-1)n}{(4+\alpha)(3-2p)}  \quad & \mbox{if}~ p<\frac{3}{2}.
\end{cases}
$$
Continuing this process, a finite number of iterations gives
$$u\in L^{q}(B_{1/4}) \quad\mbox{for any}~ 1<q<\infty.$$
Therefore, we have by H\"{o}lder inequality,
$$\int_{B_{1/4}}G(x,y)|y|^{\alpha}u^{p}(y)dy\in L^{\infty}(B_{1/4}).$$
This implies $u\in L^{\infty}(B_{1/4})$ by using the integral equation \eqref{step3.1}. Combined with the fact $|\cdot|^\alpha \in L^{q_0}(B_{1/2})$ for some $q_0 > n/4$, we easily obtain that $x=0$ is a removable singularity.

Next, we discuss the case $0\leq \alpha <4$. Since the method is similar to the above, we only give the details that need to be changed.

{\bf Case 2:} $0\leq \alpha <4$. In this case, we have $(p-1)\frac{n}{4} - 1 >0$. By Theorem \ref{thm1.1} and \eqref{Remty01} we obtain
\begin{equation}\label{Remty03}
\aligned
\int_{B_{1/2}} (|x|^{\alpha}u^{p-1})^{\frac{n}{4}} dx & \leq C\int_{B_{1/2}}u |x|^{\frac{\alpha n}{4} - \frac{4+\alpha}{p-1} \big[ (p-1)\frac{n}{4} - 1\big]} dx  \\
& = C \int_{B_{1/2}}u |x|^{-n + \frac{4+\alpha}{p-1}} dx < \infty.
\endaligned
\end{equation}
Set $b(x):=|x|^\alpha u^{p-1}(x)$. Then by \eqref{Remty03} we have $b \in L^{\frac{n}{4}} (B_{1/2})$. For any large number $L>0$, let
\begin{eqnarray*}
b_{L}(x)=
\begin{cases}
b(x) &\mbox{if }|b(x)|\geq L ,\cr
0 &\mbox{otherwise},
\end{cases}
\end{eqnarray*}
and
$$b_{M}(x)=b(x)-b_{L}(x).$$
Similar to Case 1, we can obtain the following integral equation
$$
u(x)=T_{L}^b u(x)+F_{L}^b u(x) + H(x) - v(x),
$$
where $v,~H\in L^{\infty}(B_{1/4})$, $T_{L}^b$ is a linear operator given by
$$
T_{L}^b w(x) = \int_{B_{1/4}} G(x,y) b_{L}(y) w(y) dy,
$$
and
$$
F_{L}^b u(x) =  \int_{B_{1/4}} G(x,y) b_{M}(y) u(y)dy.
$$
For any $q \in (\frac{n}{n-4},  \infty)$, there exists $ r \in (1, \frac{n}{4}) $ such that
\begin{equation}\label{relqa021}
\frac{1}{q}=\frac{1}{r}-\frac{4}{n}.
\end{equation}
By Hardy-Littlewood-Sobolev inequality and H\"{o}lder inequality, we obtain that $T_{L}^b: L^{q}(B_{1/4})\rightarrow L^{q}(B_{1/4})$ is a contracting operator for large $L$.

Moreover, it is easy to verify that \eqref{Remty02} still holds in this case. Namely,
$$
u\in L^{r}(B_{1/4}) \quad\mbox{for any}~ 1< r \leq \frac{(p-1)n}{4 + \alpha}.
$$
By the boundedness of $b_{M}$ and Hardy-Littlewood-Sobolev inequality, we have $F_{L}^b u \in L^{q}(B_{1/4})$ for
$$
\begin{cases}
1<q <\infty  \quad & \mbox{if}~ p \geq (8+\alpha)/4, \\
1<q \leq\frac{(p-1)n}{8+\alpha-4p}  \quad & \mbox{if}~ p< (8+\alpha)/4.
\end{cases}
$$
Using the Regularity Lifting Theorem, we obtain $u\in L^{q}(B_{1/4})$ for
$$
\begin{cases}
1<q <\infty  \quad & \mbox{if}~ p \geq (8+\alpha)/4, \\
1<q \leq\frac{(p-1)n}{8+\alpha-4p}  \quad & \mbox{if}~ p< (8+\alpha)/4.
\end{cases}
$$
Similar to Case 1, a finite number of iterations yields
$$u\in L^{q}(B_{1/4}) \quad\mbox{for any}~ 1<q<\infty.$$
The rest is the same as Case 1, so we omit the details.  The proof of Proposition \ref{Remov} is completed.
\end{proof}

\noindent\textbf{Proof of Theorem \ref{thm1.3}} It follows from Theorem \ref{thm1.1}, Propositions \ref{lim} and \ref{Remov}.

\medskip

\noindent\textbf{Proof of Theorem \ref{mth1}.} Our proof is based on the monotonicity formula in Proposition \ref{Mon01}, the radial symmetry in Proposition \ref{pro2} and the removable singularity in Proposition \ref{Remov}.

Let $u\in C^{4}(B_{1}(0)\setminus\{0\})$ be a nonnegative solution of \eqref{eq1} with $m=2$.  We claim that either
\begin{align}\label{step1.1-01}
\lim_{|x|\rightarrow0}|x|^{\frac{4+\alpha}{p-1}}u(x)=0
\end{align}
or
\begin{align}\label{step1.2}
\lim_{|x|\rightarrow0}|x|^{\frac{4+\alpha}{p-1}}u(x)=A_{0}^{\frac{1}{p-1}}.
\end{align}
For $\lambda>0$, define $u^{\lambda}(x)=\lambda^{\frac{4+\alpha}{p-1}}u(\lambda x)$. It follows from Theorem \ref{thm1.1} that $u^{\lambda}$ is  uniformly bounded in $C^{4,\gamma}(K)$ on every compact set $K\subset B_{1/{2\lambda}} \setminus\{0\}$ for some $0<\gamma<1$. Therefore, there exists a nonnegative function $u^{0}\in C^{4}(\mathbb{R}^{n}\setminus\{0\})$ such that,  after passing to a subsequence of $\lambda\rightarrow0$,
$$u^{\lambda} \rightarrow u^{0} \quad\mbox{in}~ C_{loc}^{4}(\mathbb{R}^{n}\setminus\{0\}) \quad  \mbox{as}~ \lambda\rightarrow0,$$
and $u^0$ satisfies
$$\Delta^{2}u^{0}=|x|^{\alpha}(u^{0})^{p} \quad\mbox{in} ~\mathbb{R}^{n}\setminus\{0\}.$$
It follows from Proposition \ref{pro2} that $u^{0}$ is radially symmetric with respect to the origin. Moreover, by the scaling invariance of $\widetilde{E}$, we have that for any $r>0$
$$\widetilde{E}(r;u^{0})=\lim_{\lambda\rightarrow0}\widetilde{E}(r;u^{\lambda})=\lim_{\lambda\rightarrow0}\widetilde{E}(r\lambda;u)=\widetilde{E}(0;u).$$
This implies that $\widetilde{E}(r;u^{0})$ is a constant. Let
$$w^{0}(t)=|x|^{\frac{4+\alpha}{p-1}}u^{0}(|x|) \quad \mbox{with} ~ t=\ln|x|.$$
Then the function $w^{0}$ satisfies
\begin{equation}\label{w}
\partial^{(4)}_{t}w^{0}+A_{3}\partial^{(3)}_{t}w^{0}+A_{2}\partial_{tt}w^{0}+A_{1}\partial_{t}w^{0}+A_{0}w^{0}=(w^{0})^{p}
\end{equation}
and
$$E(t;w^{0})=\widetilde{E}(r;u^{0}) \equiv const. $$
Thus, we get
$$0=\frac{d}{dt}E(t;w^{0})=|\mathbb{S}^{n-1}|\left[A_{3}(\partial_{tt}w^{0})^{2}-A_{1}(\partial_{t}w^{0})^{2}\right].$$
Since $A_{3}<0$, $A_{1}>0$ (or $A_3>0,~A_1<0$), we obtain that $w^{0}$ is a constant. Furthermore, by the equation \eqref{w} we have
$$\mbox{either}\quad w^{0}\equiv0 \quad\mbox{or}\quad w^{0}=A_{0}^{\frac{1}{p-1}}.$$
This, combined with the existence of $\lim_{r \rightarrow 0^+}\widetilde{E}(r;u)$, we obtain that either
$$\lim_{|x|\rightarrow0}|x|^{\frac{4+\alpha}{p-1}}u(x)=0$$
or
$$\lim_{|x|\rightarrow0}|x|^{\frac{4+\alpha}{p-1}}u(x)=A_{0}^{\frac{1}{p-1}}.$$
Together with Proposition \ref{Remov}, we proved Theorem \ref{mth1}.
$\hfill\square$

\medskip
\noindent{\bf Acknowledgements.}  X. Huang is partially supported by NSFC (No.~12271164); the Shanghai Frontier Research Center of Modern Analysis. Huang and Li are also supported in part by Science and Technology Commission of Shanghai Municipality (No.~22DZ2229014).

\bigskip

\noindent X. Huang \\
\noindent School of Mathematical Sciences and Shanghai Key Laboratory of PMMP \\
East China Normal University, Shanghai 200241, China \\
Email: \textsf{xhuang@cpde.ecnu.edu.cn}

\bigskip

\noindent Y. Li \\
\noindent School of Mathematical Sciences and Shanghai Key Laboratory of PMMP \\
East China Normal University, Shanghai 200241, China \\
Email: \textsf{yli@math.ecnu.edu.cn}

\bigskip

\noindent H. Yang \\
\noindent Department of Mathematics, The Hong Kong University of Science and Technology\\
Clear Water Bay, Kowloon, Hong Kong, China \\
Email: \textsf{mahuiyang@ust.hk}

\end{document}